\documentclass[11pt, reqno]{amsart}

\usepackage[left=1.5in,right=1.5in]{geometry}
\usepackage[colorlinks=true,linkcolor=blue,citecolor=blue]{hyperref}

\usepackage{amscd,amssymb,amsmath,graphicx,verbatim,mathrsfs}
\usepackage{extarrows}
\usepackage{dcpic,mathtools,pictexwd}
\usepackage[all,cmtip]{xy}
\usepackage{amsthm}

\usepackage{tikz-cd}
\usepackage{adjustbox}

\newtheorem{thm}{Theorem}[section]
\newtheorem{lem}[thm]{Lemma}

\newtheorem{prop}[thm]{Proposition}
\newtheorem{conj}[thm]{Conjecture}

\newtheorem{thmx}{Theorem}

\theoremstyle{definition}
\newtheorem{defn}[thm]{Definition}
\newtheorem{eg}[thm]{Example}
\newtheorem{rmk}[thm]{Remark}

\numberwithin{equation}{section}

%\begin{itemize}
%	\item[] Jintao Deng \\
%	Department of Mathematics, University of Waterloo, Waterloo, ON, Canada. \\
%	E-mail: jintao.deng@uwaterloo.ca
%	
%	\item[] Geng Tian\\
%	School of Mathematics, Liaoning University, Liaoning, China,\\
%	E-mail: gengtian.math@gmail.com
%	
%	\item[] Zhizhang Xie \\
%	Department of Mathematics, Texas A\&M University, College Station, TX 77843-3368, USA.\\
%	E-mail: xie@math.tamu.edu
%	
%	\item[] Guoliang Yu \\
%	Department of Mathematics, Texas A\&M University, College Station, TX 77843-3368, USA.\\
%	E-mail: guoliangyu@math.tamu.edu
%	
%\end{itemize}

\begin{document}

\title[Relative group $C^\ast$-algebras and relative Novikov conjecture]{$K$-theory of relative group $C^*$-algebras and the relative Novikov conjecture}

\author{Jintao Deng}
\address[Jintao Deng]{Department of Mathematics, University of Waterloo}
\email{jintao.deng@uwaterloo.ca}
%\thanks{}

\author{Geng Tian}
\address[Geng Tian]{School of Mathematics, Liaoning University}
\email{gengtian.math@gmail.com}
%\thanks{}
%
\author{Zhizhang Xie}
\address[Zhizhang Xie]{ Department of Mathematics, Texas A\&M University }
\email{xie@math.tamu.edu}
\thanks{The third author is partially supported by NSF 1800737 and 1952693.}
\author{Guoliang Yu}
\address[Guoliang Yu]{ Department of
	Mathematics, Texas A\&M University}
\email{guoliangyu@math.tamu.edu}
\thanks{The fourth author is partially supported by NSF 1700021, 2000082, and the Simons Fellows Program.}

\date{}
%\onehalfspacing

\begin{abstract}
 The relative Novikov conjecture states that  the relative higher signatures of manifolds with boundary
are invariant under orientation-preserving homotopy equivalences of pairs. The relative Baum-Connes assembly encodes information about the relative higher index of elliptic operators on manifolds with boundary. In this paper, we study the relative Baum-Connes assembly map for any pair of groups and  apply it to solve the relative Novikov conjecture when the groups satisfy certain geometric conditions.
\end{abstract}

\maketitle

\tableofcontents

\section{Introduction}

A fundamental problem in  topology is the Novikov conjecture which states that the higher signatures of a closed (i.e. compact without boundary) oriented smooth manifold are invariant under orientation-preserving homotopy equivalences. The Novikov conjecture has been proved for a large class of manifolds by techniques from noncommutative geometry and geometric group theory. While the Novikov conjecture concerns with closed manifolds, there is a natural analogue, called the relative Novikov conjecture,  for compact oriented manifolds with boundary.
The relative Novikov conjecture states that  the relative higher signatures of a compact oriented smooth manifold with boundary are invariant under orientation-preserving homotopy equivalences of pairs. The purpose of this article is to develop a $C^*$-algebraic approach to the relative Novikov conjecture. In particular, we prove that the relative Novikov conjecture holds for a compact oriented smooth manifold with boundary  if the fundamental groups of the manifold and its boundary satisfy certain geometric conditions.

Suppose $M$ is a compact oriented manifold with boundary $\partial M$. Let $G=\pi_1 (\partial M)$ and $\Gamma=\pi_1 M$ denote their fundamental groups. Moreover, let $h\colon  G \to \Gamma$ be the group homomorphism induced by the inclusion $\partial M \hookrightarrow M$.  Suppose $\underline{E}G$ (resp. $\underline{E}\Gamma$) is the universal space for proper $G$ (resp. $\Gamma$) actions. Then $h$ induces a $(G, \Gamma)$-equivariant continuous map from $\underline{E}G$ to $\underline{E}\Gamma$, that is, the map commutes with the actions of $G$ and $\Gamma$.  One can  define a  relative Baum-Connes assembly map
$$
\mu_{max}: K^{G,\Gamma}_*(\underline{E}G,\underline{E}\Gamma) \to K_*(C^*_{max}(G, \Gamma)),
$$
where $K^{G,\Gamma}_*(\underline{E}G,\underline{E}\Gamma)$ is the relative $K$-homology for the pair  $(\underline{E}G,\underline{E}\Gamma)$ with respect to $h$ and $C^*_{max}(G,\Gamma)$ is the maximal relative group $C^*$-algebra of the pair of groups $(G, \Gamma)$ with respect to $h$. We show that the injectivity of the above relative Baum--Connes assembly map $\mu_{\max}$ implies the relative Novikov conjecture. In general, the injectivity of  the  relative Baum--Connes assembly map $\mu_{\max}$ remains an open question. In this paper,  we verify the injectivity of this relative Baum--Connes assembly map under certain geometric assumptions on the groups $\Gamma$ and $\ker(h)$. Here $\ker(h)=\left\{g \in G \mid  h(g)=e\right\}$, where $e\in \Gamma$ is the identity of $\Gamma$.

Before we state the main results of this paper, let us first introduce the following notion of group homomorphisms with good kernel property.
\begin{defn}\label{weakgamma}
 Let $G$ and $\Gamma$ be countable discrete groups.
 \begin{enumerate}
     \item[(1)] A homomorphism $h\colon G\to \Gamma$ has maximal good kernel property if for any subgroup $G'\subseteq G$ containing $\ker(h)$ with $[G^\prime: \ker(h)]<\infty$, the maximal Baum--Connes conjecture with coefficients holds for $G^\prime$.
     \item[(2)] A homomorphism $h:G\to \Gamma$ has reduced good kernel property if for any subgroup $G'\subseteq G$ containing $\ker(h)$ with $[G^\prime: \ker(h)]<\infty$, the reduced Baum--Connes conjecture with coefficients holds for $G^\prime$.
  \end{enumerate}
\end{defn}

\begin{eg}\leavevmode
\begin{enumerate}
    \item[(1)] If $\ker(h)$ is a-T-menable, then $h$ has both the maximal and the reduced good kernel properties.
     \item[(2)] When $\ker(h)$ is word hyperbolic in the sense of Gromov, the map $h$ has the reduced good kernel property.
\end{enumerate}
\end{eg}

%[Theorem \ref{thm-maxi-Novikov}]
To motivate one of our main theorems (Theorem \ref{thmb}), we first show  the following result.
\begin{thmx}\label{thma}
Let  $h:G\rightarrow \Gamma$ be a group homomorphism with the maximal good kernel property. Suppose that $\Gamma$ admits a coarse embedding into Hilbert space. Then the maximal relative assembly map
$$\mu_{max}:K_{*}^{G,\Gamma}(\underline{E}G,\underline{E}\Gamma){\rightarrow}  K_*(C_{max}^{*}(G,\Gamma))$$
is injective.
\end{thmx}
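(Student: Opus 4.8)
The plan is to realize both sides of the relative assembly map as mapping cones and to reduce the statement to the behavior of the two absolute maximal assembly maps $\mu_G$ and $\mu_\Gamma$. Concretely, $C^*_{max}(G,\Gamma)$ is by construction the mapping cone of the $\ast$-homomorphism $h_*\colon C^*_{max}(G)\to C^*_{max}(\Gamma)$ induced by $h$, while $K^{G,\Gamma}_*(\underline EG,\underline E\Gamma)$ is the relative $K$-homology of the equivariant map $\underline EG\to\underline E\Gamma$. Each therefore sits in a six-term mapping cone exact sequence, and $\mu_{max}$ is a morphism between these two long exact sequences whose remaining vertical arrows are exactly $\mu_G$ and $\mu_\Gamma$. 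First I would verify that this ladder commutes, i.e. that the assembly map is natural with respect to $h$ on both the topological and the $C^*$-algebraic mapping cone sequences; this is routine but must be carried out carefully at the level of the explicit index/Kasparov-product description of the assembly map.

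Granting the commuting ladder, a diagram chase (the monomorphism form of the four/five lemma) shows that $\mu_{max}$ is injective provided $\mu_\Gamma$ is injective and $\mu_G$ is an isomorphism. Indeed, if $\mu_{max}(x)=0$, then injectivity of $\mu_G$ forces the image of $x$ under the connecting map to vanish, so $x=p(y)$ comes from the $\Gamma$-term; pushing $y$ down and using exactness of the bottom row writes $\mu_\Gamma(y)=h_*(z)$; surjectivity of $\mu_G$ lifts $z$ to the topological side, and injectivity of $\mu_\Gamma$ then yields $y\in\operatorname{im}(h_*)=\ker p$, whence $x=0$. Thus the proof splits into establishing (a) injectivity of $\mu_\Gamma$ and (b) the maximal Baum--Connes isomorphism for $G$.

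For (a), I would use the hypothesis that $\Gamma$ coarsely embeds into Hilbert space. By Yu's coarse embedding theorem together with the Skandalis--Tu--Yu groupoid reformulation, coarse embeddability implies injectivity of the reduced Baum--Connes assembly map for $\Gamma$. Since the maximal and reduced assembly maps are related by $\mu_{red}=\lambda_*\circ\mu_{\Gamma}$, where $\lambda\colon C^*_{max}(\Gamma)\to C^*_{red}(\Gamma)$ is the canonical quotient, injectivity of $\mu_{red}$ forces injectivity of the maximal assembly map $\mu_\Gamma$, which is what (a) requires.

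For (b), which I expect to be the main obstacle, I would exploit the extension $1\to\ker h\to G\to h(G)\to 1$. The quotient $h(G)$, being a subgroup of $\Gamma$, again coarsely embeds into Hilbert space. The maximal good kernel property provides the maximal Baum--Connes conjecture with coefficients for $\ker h$ and, more generally, for the preimages in $G$ of all finite subgroups of $h(G)$ --- these are precisely the subgroups $G'$ with $\ker h\subseteq G'$ and $[G':\ker h]<\infty$. This is exactly the input demanded by the permanence theorems for the Baum--Connes conjecture under group extensions, in the spirit of Oyono-Oyono and of Chabert--Echterhoff's going-down techniques, where fibrewise Baum--Connes with coefficients over the finite subgroups of the quotient is combined with good geometric properties of the quotient. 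The delicate point, and the crux of the whole argument, is to upgrade the coarse embeddability of $h(G)$ from the \emph{injectivity} it supplies for $h(G)$ to the full \emph{isomorphism} needed for $\mu_G$; this is why the kernel hypothesis is imposed in the strong form ``with coefficients, for all finite extensions of $\ker h$'' rather than merely for $\ker h$ itself. Once the isomorphism of $\mu_G$ and the injectivity of $\mu_\Gamma$ are established, the diagram chase of the second paragraph completes the proof.
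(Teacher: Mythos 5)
Your ladder setup and the diagram chase are correct as far as they go: with the relative terms realized as mapping cones, the sharp four lemma does show that injectivity of the relative $\mu_{max}$ follows from injectivity of $\mu_\Gamma$ plus bijectivity of $\mu_G$, and your step (a) is sound (Yu/Skandalis--Tu--Yu give injectivity of the reduced assembly map for coarsely embeddable $\Gamma$, and $\mu_{red}=\lambda_*\circ\mu_{\Gamma}$ then forces injectivity of the maximal one). The genuine gap is step (b), which you yourself flag as the crux but never close: nothing in the hypotheses can deliver the maximal Baum--Connes \emph{isomorphism} for $G$ with trivial coefficients. The extension-permanence results of Oyono-Oyono and Chabert--Echterhoff need the conjecture (an isomorphism, with coefficients) for the quotient $h(G)$ as input, whereas coarse embeddability supplies only injectivity, and there is no mechanism to upgrade it. Worse, the statement you need is actually false under the theorem's hypotheses: take $h=\mathrm{id}\colon\Gamma\to\Gamma$ with $\Gamma$ an infinite hyperbolic group with property (T), e.g.\ a cocompact lattice in $\mathrm{Sp}(n,1)$. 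Then $\ker h$ is trivial, so the maximal good kernel property holds (it reduces to finite subgroups, covered by Green--Julg), and $\Gamma$ is coarsely embeddable; yet $\mu_G=\mu_{\Gamma}^{max}$ is not surjective --- the Kazhdan projection $p\in C^*_{max}(\Gamma)$ satisfies $\lambda_*[p]=0$ while $\tau_*[p]=1$ under the trivial representation, so $[p]\neq 0$ cannot be in the image of $\mu_{max}$ given that $\mu_{red}$ is injective. Thus the hypotheses of the theorem are satisfied while your reduction is unattainable: the ladder chase with trivial coefficients cannot be completed.

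The paper's proof keeps your skeleton but runs it only after a change of coefficients that makes ``isomorphism for $G$'' provable. From the coarse embedding of $\Gamma$ it constructs (following Higson--Kasparov--Trout, Skandalis--Tu--Yu, and Deng) a \emph{proper} $\Gamma$-$C^*$-algebra $\mathcal{A}(X)$ and a Bott asymptotic morphism $\beta_t\colon\mathcal{S}\to\mathcal{A}(X)$; the relative Bott map $\beta_{L,*}^{G,\Gamma}\colon K_*^{G,\Gamma}(\underline{E}G,\underline{E}\Gamma,\mathcal{S})\to K_*^{G,\Gamma}(\underline{E}G,\underline{E}\Gamma,\mathcal{A}(X))$ is an isomorphism (five lemma from the absolute Bott periodicity), and it intertwines $\mu_{max}$ with $\mu_{max}^{\mathcal{A}(X)}$. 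With these twisted coefficients your ladder closes: $\mu_\Gamma^{\mathcal{A}(X)}$ is an isomorphism because the $\Gamma$-action on $\mathcal{A}(X)$ is proper, and $\mu_G^{\mathcal{A}(X)}$ is an isomorphism by cutting-and-pasting --- writing $\mathcal{A}(X)$ as a limit of proper cocompact $h(G)$-algebras $\mathcal{A}_\alpha$ over spaces decomposed as finite unions of pieces $Y_i\times_{F_i}h(G)$ with $F_i$ finite, and identifying $C^*_{max}(G,B)\cong C^*_{max}(h^{-1}(F_i),B_0)\otimes K$. It is exactly at this point, and only here, that the maximal good kernel property enters, since $[h^{-1}(F_i):\ker(h)]<\infty$; Mayer--Vietoris and the five lemma then globalize. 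The five lemma in the relative sequence gives that $\mu_{max}^{\mathcal{A}(X)}$ is an isomorphism, and commutativity of the Bott square together with invertibility of $\beta_{L,*}^{G,\Gamma}$ yields injectivity of $\mu_{max}$. So the missing idea in your proposal is precisely this dual-Dirac-style descent to proper coefficients: without it, step (b) demands a form of the maximal Baum--Connes conjecture that fails for groups squarely within the theorem's scope.
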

For example, if the kernel $\ker(h)$ is a-T-menable and $\Gamma$ admits a coarse embedding into Hilbert space, then the relative assembly map
$$
\mu_{max}: K^{G,\Gamma}_*(\underline{E}G,\underline{E}\Gamma) \to K_*(C^*_{max}(G, \Gamma))
$$
 is injective. We mention that   Y. Kubota  also proved the above maximal strong relative Novikov conjecture under slightly stronger assumptions \cite{MR4170653}.
 We thank Y. Kubota for bringing this to our attention.

There are many groups such as hyperbolic groups with property (T) that satisfy the reduced Baum-Connes conjecture with coefficients, but fail the maximal Baum-Connes conjecture with coefficients. For such groups, it is more natural to consider a reduced version of the relative Baum--Connes assembly map. However, the relative reduced group $C^\ast$-algebra for a general group homomorphism $h\colon G\to\Gamma$ is \emph{not} defined, unless one imposes strong restrictions on the kernel $\ker(h)$, which would be restrictive for some applications. To overcome this difficulty, we instead consider
the relative reduced group $C^\ast$-algebra $C^*_{red}(G, \Gamma, \mathcal M)$ with coefficients in a ${\rm II}_1$-factor $\mathcal M$, which is well-defined for an arbitrary group homomorphism $h\colon G\to \Gamma$, due  to the presence of $\mathcal M$.  The use of ${\rm II}_1$-factors is inspired by the work of Antonini, Azzali and Skandalis \cite{An-Az-Ska-Bivariant-K, An-Az-Ska-BC-conjecture-localised}. Our first main result of the paper is as follows.

 \begin{thmx}\label{thmb}
Let  $h:G\rightarrow \Gamma$ be a group homomorphism with the reduced good kernel property.  Assume that $\Gamma$ is coarsely embeddable into Hilbert space. Then the relative Baum-Connes assembly map
 $$\mu_{red}\colon  K^{G, \Gamma}_{*}(\underline{E}G, \underline{E}\Gamma, \mathcal M)\to  K_*(C^*_{red}(G, \Gamma, \mathcal  M))$$
 is injective, where $K^{G,\Gamma}_{*}(\underline{E}G,\underline{E}\Gamma, \mathcal M)$ is the relative $K$-homology with coefficients in $\mathcal  M$  and $ K_*(C^*_{red}(G, \Gamma, \mathcal M))$ is the $K$-theory of reduced relative group $C^*$-algebras with coefficients in $\mathcal M$.
\end{thmx}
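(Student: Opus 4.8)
The plan is to mirror the proof of Theorem~\ref{thma}, carrying the entire argument through the reduced crossed products with coefficients in the ${\rm II}_1$-factor $\mathcal M$. I would first record that the relative $K$-homology $K^{G,\Gamma}_*(\underline{E}G,\underline{E}\Gamma,\mathcal M)$ and the $K$-theory of $C^*_{red}(G,\Gamma,\mathcal M)$ both arise as mapping cones of the maps induced by $h$, so that $\mu_{red}$ sits in a commutative ladder of long exact sequences relating it to the absolute assembly maps $\mu_G$ and $\mu_\Gamma$ with $\mathcal M$-coefficients. The presence of $\mathcal M$ is what makes the reduced relative algebra well-defined for an arbitrary $h$, and throughout the argument it lets me work at the reduced level using the trace-theoretic and localization framework of Antonini--Azzali--Skandalis in place of the maximal completions used for Theorem~\ref{thma}.

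The two hypotheses feed in as follows. Since $\Gamma$ is coarsely embeddable into Hilbert space, its subgroup $Q:=\mathrm{im}(h)\cong G/\ker(h)$ is as well, and the coarse groupoid / Dirac--dual-Dirac method of Yu and Skandalis--Tu--Yu produces a $\gamma$-element for $\Gamma$, hence for $Q$, yielding \emph{split injectivity} of the reduced assembly map with arbitrary coefficients, in particular with coefficients involving $\mathcal M$. The reduced good kernel property of Definition~\ref{weakgamma} is, in turn, exactly adapted to the extension $1\to\ker(h)\to G\xrightarrow{q} Q\to 1$: the finite-index overgroups of $\ker(h)$ are precisely the preimages $q^{-1}(F)$ of the finite subgroups $F\le Q$, and these are the groups for which a going-down/permanence argument along the extension requires the reduced Baum--Connes conjecture with coefficients. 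Thus the good kernel property supplies isomorphisms in the ``$\ker(h)$-direction'' while coarse embeddability supplies injectivity in the ``$\Gamma$-direction''.

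I would then assemble these inputs into a relative Dirac--dual-Dirac argument: the $\gamma$-element pulled back from $\Gamma$ defines a relative element, and the task would be to show that it acts as the identity on the relative $K$-homology, so that $\mu_{red}$ is split injective. Here a going-down argument over the proper actions of $Q$ would reduce the question to the subgroups $q^{-1}(F)$, where the good kernel property guarantees that assembly is an isomorphism and hence that the relative $\gamma$-element acts trivially. Chasing this through the mapping-cone ladder, with $\mu_\Gamma$ injective and the kernel contribution controlled by the good kernel property, would then give injectivity of $\mu_{red}$.

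The main obstacle is that coarse embeddability gives only \emph{injectivity}, not surjectivity, of the absolute assembly maps, so a naive five-lemma on the mapping-cone ladder fails: it would demand the full Baum--Connes isomorphism for $G$, which is not available. The real work is therefore to run the Dirac--dual-Dirac/localization argument \emph{relatively}, so that the good kernel property contributes exactly the surjectivity one needs in the kernel direction while coarse embeddability contributes injectivity in the $\Gamma$-direction. A second delicate point, absent in the maximal case, is transporting the coarse-geometric $\gamma$-element through the \emph{reduced} crossed products: one must handle the usual exactness and $K$-amenability subtleties, and it is precisely the ${\rm II}_1$-factor coefficients $\mathcal M$ and the associated trace that make the reduced construction well-defined and the comparison of assembly maps effective.
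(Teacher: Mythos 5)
Your proposal is correct and follows essentially the same route as the paper: the paper's proof of Theorem~\ref{thm-reduced-Novi-real-coef} is exactly a ``relative Dirac--dual-Dirac'' argument, in which the coarse embedding of $\Gamma$ yields a proper $\Gamma$-$C^*$-algebra $\mathcal{A}(X)$ and a Bott map that is an isomorphism on relative $K$-homology (the reduced analogue of Proposition~\ref{prop-rel-Bott}), and the relative assembly map with coefficients in $\mathcal{A}(X)\otimes\mathcal M$ is then shown to be an isomorphism by the five lemma --- $\mu_{\Gamma}^{\mathcal{A}(X)}$ being an isomorphism by properness, and $\mu_{G}^{\mathcal{A}(X)}$ by precisely your going-down/cutting-and-pasting reduction to the finite-index overgroups $h^{-1}(F)\supseteq \ker(h)$, which is where the reduced good kernel property enters. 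The only (harmless) imprecision is the phrasing in terms of an honest $\gamma$-element acting as the identity on relative $K$-homology: for a merely coarsely embeddable $\Gamma$ no such element of $KK^{\Gamma}(\mathbb{C},\mathbb{C})$ exists, and the paper instead proves directly that the Bott map $\beta_{L,*}^{G,\Gamma}$ is an isomorphism on relative $K$-homology, which, combined with the commuting square, gives injectivity (though not split injectivity) of $\mu_{red}$ without ever needing a Dirac element going back.
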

As an example, when $\ker(h)$ is hyperbolic and the group $\Gamma$ admits a coarse embedding into Hilbert space, the relative Baum--Connes assembly map $\mu_{red}$ is injective.

For a given compact oriented manifold with boundary $(M, \partial M)$, let $(D_M, D_{\partial M})$ be the associated pair of signature operators.  Then the maximal relative Baum-Connes assembly map $\mu_{\max}$ maps $(D_M, D_{\partial M})$ to the maximal relative higher index
$$\mbox{Ind}_{max}(D_M, D_{\partial M}) \in K_*(C^*_{max}(G,\Gamma)).$$
Similarly, the reduced relative Baum-Connes assembly map $(D_M, D_{\partial M})$ to the reduced relative higher index
$$\mbox{Ind}_{red}(D_M, D_{\partial M}) \in K_*(C^*_{red}(G,\Gamma, \mathcal M)).$$ In order to apply the above theorems to the relative Novikov conjecture, we prove the following theorem  which states that maximal (resp. reduced) relative higher indices of signature operators are invariant under orientation-preserving homotopy equivalences of pairs.
\begin{thmx}\label{thmc}
  Let $M$ be a compact manifold with boundary $\partial M $ and $N$ a compact manifold with boundary $\partial N$. Let $G=\pi_1( \partial M)=\pi_1 (\partial N)$ and $\Gamma=\pi_1M=\pi_1N$.  Let $D_M$ and $D_N$ be the signature operators on $M$ and $N$, respectively. If there is an orientation-preserving homotopy equivalence $f\colon (M, \partial M) \to (N, \partial N)$, then
$$
f_\ast({\rm Ind}_{max}(D_M, D_{\partial M}))={\rm Ind}_{max}(D_N, D_{\partial N}) \in K_*(C^*_{max}(G, \Gamma)),
$$
and
$$
f_\ast({\rm Ind}_{red}(D_M, D_{\partial M}))={\rm Ind}_{red}(D_N, D_{\partial N}) \in K_*(C^*_{red}(G, \Gamma, \mathcal  M)),
$$
where for example ${\rm Ind}_{max}(D_M, D_{\partial M})$ (resp.  ${\rm Ind}_{red}(D_M, D_{\partial M})$) is the maximal (resp. reduced) relative higher index of the pair of signature operators $(D_M, D_{\partial M})$.
\end{thmx}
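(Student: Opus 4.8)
The plan is to prove homotopy invariance directly at the level of the signature operators, by adapting the Hilsum--Skandalis perturbation technique to manifolds with boundary and organizing everything through the mapping cone structure of the relative group $C^*$-algebra. I would treat the maximal case and the reduced ($\mathcal M$-coefficient) case in parallel, since the only difference between them is the coefficient algebra into which the de Rham complexes are twisted.

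First I would record the elementary functorial reductions. A homotopy equivalence of pairs $f\colon (M,\partial M)\to (N,\partial N)$ restricts to a homotopy equivalence $\partial f\colon \partial M\to\partial N$ of the boundaries, and $f,\partial f$ induce the given identifications $\pi_1 M=\pi_1 N=\Gamma$ and $\pi_1\partial M=\pi_1\partial N=G$ in a way compatible with $h$. Hence $f_\ast$ on $K_\ast(C^*_{max}(G,\Gamma))$ is the map induced by this pair of group isomorphisms, and the task reduces to identifying the two relative higher index classes after transporting by these isomorphisms.

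Next I would realize each relative higher index analytically. On the universal covers one forms the Hilbert-module de Rham complexes of $L^2$ forms with coefficients in $C^*_{max}(\Gamma)$ (resp. in the Mishchenko bundle twisted by $\mathcal M$), both on $M$ and on $\partial M$; Poincar\'e--Lefschetz duality equips the pair with a duality (grading) operator, and the signature operators $D_M$, $D_{\partial M}$ arise as the associated odd operators. The relative higher index is then the class of the resulting cycle for the mapping cone of $h_\ast\colon C^*(G)\to C^*(\Gamma)$, whose boundary component recovers $\mathrm{Ind}(D_{\partial M})$ and whose cone parameter interpolates to the interior operator. The heart of the argument is to pull the signature complex of $N$ back along a smoothed representative of $f$: although pullback of forms along a homotopy equivalence is not an isometry, after smoothing it is a bounded chain homotopy equivalence of the twisted de Rham complexes, realized compatibly on $M$ and on $\partial M$. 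Following Hilsum--Skandalis one connects the pulled-back duality operator to the genuine one along a path of self-adjoint operators that remain invertible modulo the relevant ideal, producing an operator homotopy of cycles that fixes the signature class; carrying this homotopy out simultaneously on the boundary complex and along the cone parameter yields a homotopy of mapping-cone cycles, whence $f_\ast\,\mathrm{Ind}(D_N,D_{\partial N})=\mathrm{Ind}(D_M,D_{\partial M})$.

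The step I expect to be the main obstacle is the boundary compatibility: the Hilsum--Skandalis perturbation must be carried out uniformly up to $\partial M$ and matched with the boundary perturbation so that the pair remains a genuine cycle for the mapping cone at every stage. This requires controlling the product structure and the boundary conditions of $D_M$ along the entire path, together with the behaviour of the chain homotopy near the boundary, so that the interpolation built into the cone parameter is preserved throughout. Once this is established for $C^*_{max}(G,\Gamma)$, the reduced statement with coefficients in the $\mathrm{II}_1$-factor $\mathcal M$ follows verbatim, since $\mathcal M$ enters only as a fixed coefficient algebra and every operator and homotopy in the construction is $\mathcal M$-linear.
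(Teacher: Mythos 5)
There is a genuine gap in your proposal, and it sits exactly where you yourself flag the ``main obstacle.'' The paper never runs a Hilsum--Skandalis perturbation on the manifold with boundary itself, because without an invertibility hypothesis on the boundary operator (as in Leichtnam--Lott--Piazza, cf.\ the remark at the end of Section \ref{sec:app}) there is no boundary condition making $D_M$ into a $K$-cycle, and hence no ``mapping-cone cycle whose boundary component recovers $\mathrm{Ind}(D_{\partial M})$'' to perturb: that object is not constructed in your proposal, only asserted. The paper's Definition \ref{def:rel} manufactures the relative index entirely out of higher indices of \emph{complete manifolds without boundary}: attach an infinite cylinder to get $M_\infty$, take a small-propagation idempotent $p$ representing $\mathrm{Ind}_{max}(\widetilde D_\infty)$, cut it by the characteristic function $\chi$ of $\widetilde M$ to get the invertible $v=\varphi(2\pi i\,\chi p\chi)$, do the same on $\widetilde{\partial M}\times\mathbb R$ to get $v''$, and take the class of the pair $(v'',f)$ with $f$ the path of Lemma \ref{lm:invpath}. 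No boundary conditions ever appear, which is precisely the point of the construction. Your plan instead proposes to control ``the product structure and the boundary conditions of $D_M$ along the entire path'' of the perturbation; naming this difficulty is not resolving it, and in the generality of Theorem \ref{thmc} (no invertibility of $D_{\partial M}$) it is not clear it can be resolved in that form.

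Two further concrete deficits. First, even if your boundary-compatible perturbation existed, you would be proving homotopy invariance of a cycle defined differently from the paper's; you would still owe an argument that your cycle represents the class of Definition \ref{def:rel}. Second, the actual engine of the paper's proof of Theorem \ref{thm-homotopy-invariant-rel-ind} is a \emph{path-level} (not class-level) homotopy invariance statement for complete boundaryless manifolds, cited from Section 8 of \cite{Xie-Yu-Higher-invariant-in-NCG}: the homotopy equivalence produces explicit continuous paths of idempotents $p_t$, $p'_t$, $p''_t$ on $\widetilde N_\infty$, $(\partial N)_\Gamma\times\mathbb R$ and $\widetilde{\partial N}\times\mathbb R$, given by the same formula on each space, so that $v'_t=v_t$ and $h_{max}(v''_t)=v'_t$ hold for every $t$ and the pairs $(v''_t,f_t)$ assemble into a path of mapping-cone cycles joining $(v''_0,f_0)$ to $(v''_1,f_1)$. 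This formula-level compatibility across the three covers is what replaces the uniform-up-to-the-boundary control you were hoping for, and it is absent from your outline. If you want to salvage your approach, the natural repair is to transfer your chain-homotopy data to the cylindrical completions and verify that it reproduces such compatible paths of idempotents; as written, the proof has a hole at its central step.
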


Combining Theorem \ref{thma}, Theorem \ref{thmb} with Theorem \ref{thmc}, we have the following theorem on the relative Novikov conjecture.
\begin{thmx}
  Let $(M, \partial M)$  and $(N, \partial N)$ be compact oriented smooth manifolds with boundary. Suppose   $f: (M, \partial M)\to (N, \partial N)$ is an  orientation-preserving homotopy equivalence.  Denote $G=\pi_1 (\partial M)\cong \pi_1 (\partial N)$ and $\Gamma=\pi_1M\cong \pi_1N$. Let $h\colon G\to \Gamma$ be the group homomorphism induced by the inclusion map $\partial M \hookrightarrow M$. If   the kernel of  $h\colon G \to  \Gamma$ is hyperbolic or a-T-menable, and $\Gamma$ admits a coarse embedding into Hilbert space, then the relative Novikov conjecture holds, i.e.,  the relative higher signatures of $(M, \partial M)$ and $(N, \partial N)$ are invariant under the homotopy equivalence $f$.
% \begin{enumerate}
%    \item[(1)]
%    \item[(2)] If the kernel of  $h\colon G \to  \Gamma$ is  a-T-menable, and $\Gamma$ admits a coarse embedding into Hilbert space, then the relative Novikov conjecture holds, i.e. the relative higher signatures of $(M, \partial M)$ and $(N, \partial N)$ are invariant under the homotopy equivalence  $f$.
%  \end{enumerate}
\end{thmx}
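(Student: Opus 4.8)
The plan is to deduce the relative Novikov conjecture from an injective relative assembly map together with the homotopy invariance of relative higher indices, adapting to the relative setting the standard passage from ``strong Novikov'' to ``Novikov.'' The first step is to observe that the kernel hypothesis forces a good kernel property on $h$. By the examples recorded after Definition \ref{weakgamma}, if $\ker(h)$ is a-T-menable then $h$ has both the maximal and the reduced good kernel properties, while if $\ker(h)$ is hyperbolic then $h$ has the reduced good kernel property. Since $\Gamma$ is assumed to be coarsely embeddable into Hilbert space, in either case the hypotheses of Theorem \ref{thmb} are satisfied, so the reduced relative assembly map $\mu_{red}$ is injective. This covers both cases uniformly; when $\ker(h)$ is a-T-menable one may alternatively invoke Theorem \ref{thma} and carry out the same argument with $\mu_{max}$ and the maximal relative index, thereby avoiding the ${\rm II}_1$-factor coefficients altogether.

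Next I would set up the index-theoretic translation of the relative higher signatures. Choosing classifying maps $c_M$ and $c_N$ for the pairs $(M,\partial M)$ and $(N,\partial N)$ so that $c_M\simeq c_N\circ f$, one pushes the pairs of signature operators forward to relative $K$-homology classes $\alpha_M=(c_M)_\ast[D_M,D_{\partial M}]$ and $\alpha_N=(c_N)_\ast[D_N,D_{\partial N}]$ in $K^{G,\Gamma}_\ast(\underline{E}G,\underline{E}\Gamma,\mathcal M)$, the image in the $\mathcal M$-coefficient theory being taken via the unital inclusion $\mathbb C\hookrightarrow\mathcal M$. The relative higher signatures of $(M,\partial M)$ are exactly the rational pairings of $\alpha_M$ against the relative cohomology of the classifying pair; since $f$ induces the canonical identifications of fundamental groups and pulls relative cohomology back isomorphically, the relative Novikov conjecture is equivalent to the equality $f_\ast\alpha_M=\alpha_N$ after applying the canonical trace $\tau$ on $\mathcal M$ and working rationally.

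I would then close the diagram using naturality of assembly. By naturality, $\mu_{red}(f_\ast\alpha_M)=f_\ast\mu_{red}(\alpha_M)=f_\ast\mathrm{Ind}_{red}(D_M,D_{\partial M})$, and Theorem \ref{thmc} gives $f_\ast\mathrm{Ind}_{red}(D_M,D_{\partial M})=\mathrm{Ind}_{red}(D_N,D_{\partial N})=\mu_{red}(\alpha_N)$. Hence $\mu_{red}(f_\ast\alpha_M)=\mu_{red}(\alpha_N)$, and the injectivity of $\mu_{red}$ established in the first step forces $f_\ast\alpha_M=\alpha_N$ in the relative $K$-homology with coefficients in $\mathcal M$. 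Applying $\tau$ recovers the equality of all rational relative higher signatures of $(M,\partial M)$ and $(N,\partial N)$, which is the assertion of the relative Novikov conjecture.

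The step I expect to be the main obstacle is the final translation: extracting the numerical relative higher signatures from the $\mathcal M$-coefficient class and verifying that the trace $\tau$ on $\mathcal M$ detects precisely the pairings against relative group cohomology. This requires matching the relative index pairing with the higher-signature pairing and checking that the composite $\mathbb C\hookrightarrow\mathcal M\xrightarrow{\tau}\mathbb C$ acts as the identity on the relevant rational invariants, so that no information about the higher signatures is lost in passing through $\mathcal M$. The remaining inputs—naturality of the assembly map and the identification of relative higher signatures with relative index pairings—are routine once the relative framework developed in the body of the paper is in place.
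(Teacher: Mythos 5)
Your overall strategy coincides with the paper's: deduce the good kernel property from the hypothesis on $\ker(h)$ (a-T-menable gives both properties, hyperbolic gives the reduced one), invoke Theorem \ref{thma} or Theorem \ref{thmb} to get injectivity of the relevant relative assembly map, invoke Theorem \ref{thmc} for homotopy invariance of the relative higher indices, and then use injectivity plus naturality to upgrade the index equality to an equality of relative $K$-homology classes. Those first two stages of your argument are correct and match the paper's proof.

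The genuine gap is in your final step, and it is precisely the step you flag as the ``main obstacle,'' but the mechanism you propose for it is not the right one. You obtain $f_\ast\alpha_M=\alpha_N$ in $K^{G,\Gamma}_{*}(\underline{E}G,\underline{E}\Gamma,\mathcal M)$, whereas the relative higher signatures in line \eqref{eq:pair} are pairings of classes in $K_*(BG,B\Gamma)\otimes\mathbb{C}$ against $H^*(BG,B\Gamma)$; these are genuinely different groups, since the equivariant theory built from $\underline{E}G$ and $\underline{E}\Gamma$ carries extra contributions coming from finite subgroups. To descend, two ingredients are needed: (i) the rational injectivity of the relative local index map $\sigma_{G,\Gamma}\colon K_*(BG,B\Gamma)\otimes\mathbb{R}\to K^{G,\Gamma}_{*}(\underline{E}G,\underline{E}\Gamma,\mathcal M)$, so that equality upstairs forces equality of the pushed-forward signature classes in $K_*(BG,B\Gamma)\otimes\mathbb{R}$; and (ii) the relative Connes--Chern character isomorphism $K_*(BG,B\Gamma)\otimes\mathbb{C}\cong H_*(BG,B\Gamma)\otimes\mathbb{C}$ of \cite{Baum-Conn_chern-character}, under which the relative index pairing becomes the canonical homology--cohomology pairing, so that equality of classes yields equality of all relative higher signatures. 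The paper supplies (i) in the remark following the relative Baum--Connes conjecture in Section 3 (using that $K_*(BG)\otimes\mathbb{R}$ is a summand of $K^{G}_*(\underline{E}G,\mathcal M)$, in the spirit of Antonini--Azzali--Skandalis) and, in the maximal case, the always-injectivity of $K_*(BG,B\Gamma)\otimes\mathbb{C}\to K_*^{G,\Gamma}(\underline{E}G,\underline{E}\Gamma)\otimes\mathbb{C}$; it supplies (ii) by citing the relative Chern character. By contrast, ``applying the canonical trace $\tau$ on $\mathcal M$'' cannot do this job: the trace only collapses the $\mathcal M$-coefficients back to $\mathbb{R}$ (indeed $K^{G,\Gamma}_{*}(\underline{E}G,\underline{E}\Gamma,\mathcal M)\cong K^{G,\Gamma}_{*}(\underline{E}G,\underline{E}\Gamma)\otimes\mathbb{R}$ already, by the paper's Section 3 remark), but it neither produces the family of pairings against $H^*(BG,B\Gamma)$ nor addresses the passage from the equivariant $K$-homology of the universal proper spaces to the $K$-homology of the classifying pair $(BG,B\Gamma)$, which is where the signature pairings are defined. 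Once you replace the trace argument by the injectivity of $\sigma_{G,\Gamma}$ together with the relative Chern character, your proof closes and agrees with the paper's.
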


The paper is organized as follows. In Section 2, we formulate the maximal strong relative Novikov conjecture for a pair of discrete groups $(G,\Gamma)$. In section 3, we introduce the reduced strong relative Novikov conjecture and show the assembly map with a $\mbox{II}_1$-factor is an isomorphism for a pair of hyperbolic groups.  In Section 4 and 5, we prove the maximal and reduced strong relative Novikov conjecture under some geometric  assumptions on the group $G$, $\Gamma$ and the kernel of $h\colon G\to \Gamma$. In Section 6, we define the relative higher index for signature operators on manifolds with boundary,  and show that the relative higher indices of signature operators is invariant under orientation-preserving homotopy equivalences of pairs.

\section{The relative Novikov conjecture}

In this section, we shall first recall the definition of Roe algebras and localization algebras, then introduce the notions of relative Roe algebras and relative localization algebras associated with a group homomorphism $h:G \to \Gamma$. Finally, we construct the maximal relative Baum--Connes assembly maps.
%=================
%1.  chang 'firstly to first'
%2. Five Lemma Change to the five lemma.
%==============================================================
\subsection{Roe algebras and localization algebras} In this subsection, we recall the notions of Roe algebras and localization algebras for a metric space $Z$ endowed with a proper $G$-action (c.f. \cite{Willett-Yu-higher-index-book}).

Let $Z$ be a metric space with a proper $G$-action by isometries, and $A$ a $G$-$C^*$-algebra. A $G$-action on a Hausdorff space $Z$ is said to be proper if for every $x, y \in Z$ there exist neighborhood $U_x$ and $U_y$ of $x$ and $y$ respectively such that the set
$$
\left\{g \in G: g\cdot U_x \cap U_y\neq \emptyset\right\}
$$
is finite. A $G$-action is said to be cocompact if the quotient space $Z/G$ is compact.

\begin{defn} Let $H$ be a Hilbert module over the $C^*$-algebra $A$, and $\varphi: C_0(Z)\to B(H)$ a $*$-representation, where $B(H)$ is the $C^*$-algebra of all bounded (adjointable) operators on $H$. Let $T: H \to H$ be an adjointable operator.
\begin{enumerate}
    \item[(1)] The support of $T$, denoted by $\mbox{Supp}(T)$, is defined to be the complement of the set of all points $(x, y)\in Z \times Z$ for which there exists $f \in C_0(Z)$ and $g \in C_0(Z)$ such that $f\cdot T\cdot g= 0$, and $f(x)\neq 0$ and $g(y)\neq 0$;
    \item[(2)] The propagation of the operator $T$ is defined by
    $$\mbox{propagation}(T)=\sup \left\{ d(x,y): (x,y) \in \mbox{Supp}(T)\right\}.$$
    An operator $T$ is said to have finite propagation if $\mbox{propagation}(T)< \infty$;
    \item[(3)] The operator $T$ is said to be locally compact if $f\cdot T$ and $T \cdot f$ are in $K(H)$  for all $f \in C_0(Z)$, where $K(H)$ is the operator norm closure of all finite rank operators on the Hilbert module $H$.
    \item[(4)] The operator $T$ is said to be $G$-invariant if $g\cdot T=T\cdot g$ for all $g \in G$.
\end{enumerate}
\end{defn}

Let $H$ be a $G$-Hilbert module over $A$. A $*$-representation $\varphi: C_0(Z) \to B(H)$ is covariant if
$$
\varphi(\gamma f)v=(\gamma \varphi(f)\gamma^{-1})v
$$
for all $\gamma \in G$, $f \in C_0(Z)$ and $v \in H$. The triple $(C_0(Z), G, H)$ is called a covariant system.

\begin{defn}
We define the covariant system $(C_0(Z), G, H)$ to be admissible if
\begin{enumerate}
    \item[(1)] the $G$-action on $Z$ is proper and cocompact;
    \item[(2)] there exists a $G$-Hilbert  module $H_Z$ such that
    \begin{itemize}
        \item $H$ is isomorphic to $H_Z \otimes A$ as $G$-Hilbert modules over $A$;
        \item $\varphi=\varphi_0 \otimes I$ for some $G$-equivariant $*$-homomorphism $\varphi_0:C_0(Z) \to B(H_Z)$ such that $\varphi_0(f)$ is not in $K(H_Z)$ for any non-zero function $f \in C_0(Z)$ and $\varphi_0$ is non-degenerate in the sense that
        \[ \left\{\varphi_0(f)v: v \in H_Z, f \in C_0(Z)\right\}
        \] is dense in $H_Z$;
        \item for any finite subgroup $F \subseteq  G$ and any $F$-invariant Borel subset $E$ of $Z$, there is Hilbert space $H_E$ with trivial $F$-action such that $\chi_E H_Z$ and $\ell^2(F) \otimes H_E$ are isomorphic as $F$-representations.
    \end{itemize}

\end{enumerate}
\end{defn}

\begin{defn}
Let $(C_0(Z), \varphi, H)$ be an admissible system. The algebraic Roe algebra with coefficients in $A$, denoted by $\mathbb{C}[Z, A]^G$, is defined to be the algebra  of $G$-invariant locally compact operators in $B(H)$ with finite propagation. The Roe algebra $C^*(Z, A)^G$ is the operator norm closure of the $*$-algebra $\mathbb{C}[Z, A]^G$.
\end{defn}

It is easy to show that the definition of algebraic Roe algebras is independent of the choice of covariant systems (cf. \cite{Willett-Yu-higher-index-book}).

Let us now recall the definition of maximal Roe algebras. To define the maximal norm on the above $*$-algebra $\mathbb{C}[Z, A]^G$, we need some basic concepts of metric spaces. Let $X \subset Z$ be a locally finite subspace of a metric space $X$. The subspace $Y$ is said to be a net of $Z$ if $Y$ is a locally finite subspace $Z$ and $Z=N_r(Y)=\{z\in Z: d(z, Y)\leq r\}$ for some $r$. A locally finite metric space $Y$ is said to have bounded geometry if $\sup_{y\in Y}\# B_R(y)< \infty$, where $B_R(y)$ is the ball of radius $R$ centered at $y$. We say that a metric space $Z$ has bounded geometry if $Z$ has a bounded geometry net. The following result can be proved by the similar arguments in \cite[Lemma 3.4]{Gong-Wang-Yu}.
\begin{lem}[{cf. \cite[Lemma 3.4]{Gong-Wang-Yu}}]
Let  $G$ be a countable discrete group, $A$ a $G$-$C^*$-algebra, and $Z$ a proper metric space with bounded geometry endowed with a proper $G$-action by isometries. Let $(C_0(Z), \varphi, H)$ be an admissible system. Then for each $T \in \mathbb{C}[Z, A]^G$ there exists a constant $C>0$ such that
$$\|\pi(T)\|\leq C$$
for any $*$-representation $\pi:\mathbb{C}[Z, A]^G \to B(H')$.
\end{lem}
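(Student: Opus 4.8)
The plan is to bound $\|\pi(T)\|$ for an arbitrary $*$-representation $\pi$ by decomposing $T$ into a \emph{fixed finite number} of pieces, depending only on the propagation of $T$ and the geometry of $Z$, each of which lies in a genuine $C^*$-subalgebra of $\mathbb{C}[Z,A]^G$ on which $\pi$ is automatically contractive. Since $\pi$ is only assumed to be a $*$-representation of the $*$-algebra $\mathbb{C}[Z,A]^G$, it need not be contractive a priori; the whole construction is engineered precisely to reduce the global estimate to local pieces where contractivity is free.

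First I would fix $T\in\mathbb{C}[Z,A]^G$ of propagation at most $R$. Using the bounded geometry of $Z$, choose a net $Y$ with $N:=\sup_{y\in Y}\#B_{2R}(y)<\infty$, and form the graph on $Y$ joining $y,y'$ whenever $0<d(y,y')\le 2R$. This graph has degree less than $N$, so a greedy coloring produces a coloring $c\colon Y\to\{1,\dots,K\}$ with $K\le N$ in which points of the same color are at distance strictly greater than $2R$. Subordinate to small balls about the net points I would pick cutoff functions $\{\phi_y\}\subset C_0(Z)$ with $\sum_y\phi_y=1$, with $0\le\phi_y\le 1$, and with $\phi_y\phi_{\tilde y}=0$ whenever $y\ne\tilde y$ have the same color. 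Writing $T=\sum_{y,y'}\phi_y T\phi_{y'}$ (a finite sum near each point, by finite propagation) and grouping the terms by the color pair gives $T=\sum_{a,b=1}^{K}T^{(a,b)}$ with $T^{(a,b)}=\sum_{c(y)=a,\,c(y')=b}\phi_y T\phi_{y'}$, whence $\|\pi(T)\|\le K^2\max_{a,b}\|\pi(T^{(a,b)})\|$.

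The core of the argument is the claim that $\|\pi(T^{(a,b)})\|\le\|T\|$ for every $\pi$, where $\|T\|$ denotes the norm in the given admissible representation. Computing $\pi(T^{(a,b)})\pi(T^{(a,b)})^*=\pi\!\left(T^{(a,b)}(T^{(a,b)})^*\right)$ and using $\phi_{y'}\phi_{\tilde y'}=\delta_{y'\tilde y'}\phi_{y'}^2$ for same-color indices, the $(y,\tilde y)$ entry of $T^{(a,b)}(T^{(a,b)})^*$ is supported where some $y'$ satisfies $d(y,y')\le R$ and $d(\tilde y,y')\le R$, forcing $d(y,\tilde y)\le 2R$; since same-color points are more than $2R$ apart, only the diagonal terms $y=\tilde y$ survive, giving $T^{(a,b)}(T^{(a,b)})^*=\sum_{c(y)=a}\phi_y\,T\,Q_b\,T^*\phi_y$ with $Q_b=\sum_{c(y')=b}\phi_{y'}^2\le 1$. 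Applying $\pi$, the summands have pairwise orthogonal ranges and domains because $\pi(\phi_y)\pi(\phi_{\tilde y})=\pi(\phi_y\phi_{\tilde y})=0$ for distinct same-color $y,\tilde y$, so the norm of the sum is the supremum of the norms of the individual summands. Each summand $\phi_y T Q_b T^*\phi_y$ is locally compact and supported in a bounded set $E=B(y,R+1)$, hence lies in the $C^*$-algebra $K(\chi_E H)$; the restriction of $\pi$ there is a genuine $*$-representation of a $C^*$-algebra and is therefore contractive, yielding $\|\pi(\phi_y T Q_b T^*\phi_y)\|\le\|\phi_y T Q_b T^*\phi_y\|\le\|T\|^2$. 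Combining, $\|\pi(T^{(a,b)})\|^2\le\|T\|^2$, so $\|\pi(T)\|\le K^2\|T\|\le N^2\|T\|=:C$, independent of $\pi$.

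The main obstacle is the last conceptual point: transferring the estimate off the fixed admissible representation onto an arbitrary $*$-representation $\pi$, which is not assumed contractive. The coloring and the disjointness of same-color supports are exactly what is needed so that applying $\pi$ preserves the block-orthogonality and reduces the problem to single local corners sitting inside $K(\chi_E H)$, where contractivity of $\pi$ is automatic. Bounded geometry enters only to keep the number of colors $K$ uniformly finite, local compactness enters only to place each local piece inside a $C^*$-algebra of compact operators, and the proper cocompact $G$-action guarantees that the local norms are uniformly controlled by $\|T\|$; none of these steps requires any further structural input beyond what is provided by the admissible system.
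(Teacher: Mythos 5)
Your overall strategy---decompose $T$ into finitely many pieces whose squares land in a $C^*$-subalgebra where any $*$-representation is automatically contractive---is exactly the right idea (it is the idea behind the cited Lemma 3.4 of Gong--Wang--Yu, to which the paper defers for the proof). But your implementation has a genuine gap: \emph{equivariance}. The representation $\pi$ is only defined on $\mathbb{C}[Z,A]^G$, the algebra of \emph{$G$-invariant} locally compact finite-propagation operators, yet nothing in your construction makes the net $Y$, the coloring $c$, or the cutoffs $\phi_y$ compatible with the $G$-action. Consequently $T^{(a,b)}=\sum \phi_y T\phi_{y'}$ is in general not $G$-invariant, so $\pi(T^{(a,b)})$ is simply undefined; the same objection applies to $\pi(\phi_y)$ (a multiplication operator is neither $G$-invariant nor locally compact, so $\phi_y\notin\mathbb{C}[Z,A]^G$) and to ``the restriction of $\pi$ to $K(\chi_E H)$'' (operators supported in a single bounded set $E$ are not $G$-invariant, so $K(\chi_E H)$ is not a subalgebra of the domain of $\pi$). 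Worse, this cannot be repaired by making your choices equivariant: admissibility forces the action to be \emph{cocompact}, so every orbit $G\cdot y$ is $R_0$-dense in $Z$ for some fixed $R_0$; a $G$-invariant color class is a union of orbits and hence contains distinct points at distance at most $2R_0$, so as soon as the propagation $R$ exceeds $R_0$ there is no $G$-invariant coloring with same-color separation $>2R$. The coloring scheme is intrinsically non-equivariant in the cocompact setting. (There is also a minor quantitative slip---the support radii of the $\phi_y$ must be added to the $2R$ separation threshold---but that is cosmetic by comparison.)

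The correct equivariant decomposition indexes the pieces by finitely many $G$-orbits of \emph{relative positions} rather than by colors of points. Concretely: using properness and cocompactness, choose a Borel fundamental-domain-type partition $Z=\bigsqcup_{g\in G} gD$ with $D$ precompact (finite stabilizers are handled by properness), and set $T_{[g]}=\sum_{g'\in G}\chi_{g'D}\,T\,\chi_{g'gD}$. Each $T_{[g]}$ \emph{is} $G$-invariant because $T$ is and the partition is; finite propagation together with properness guarantees that only finitely many classes $[g]$ contribute, with the number controlled by $R$ and the geometry---this replaces your bound $K^2$. Then $T_{[g]}^*T_{[g]}$ is block-diagonal over the $G$-invariant, uniformly bounded partition $\{g'D\}_{g'\in G}$, and the $G$-invariant locally compact operators that are block-diagonal over such a partition form a norm-closed $C^*$-subalgebra \emph{of} $\mathbb{C}[Z,A]^G$, on which $\pi$ is automatically contractive; this yields $\|\pi(T_{[g]})\|^2\le\|T_{[g]}^*T_{[g]}\|\le\|T\|^2$. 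Note that your orthogonality step should also be run inside this $C^*$-subalgebra (estimate $\|\pi(D)\|\le\|D\|=\sup$ of block norms for the single invariant block-diagonal element $D$), rather than via the undefined operators $\pi(\phi_y)$. With these replacements your local-compactness and automatic-contractivity reasoning goes through verbatim and gives the desired uniform bound $C$.
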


It follows from the above result that the maximal norm on the $*$-algebra $\mathbb{C}[Z, A]^G$ is well-defined.
\begin{defn}
The maximal Roe algebra, denoted by $C^*_{max}(Z, A)^G$, is defined to be the completion of $\mathbb{C}[Z, A]^G$ under the maximal norm
$$\|T\|_{max}=\sup \left\{\|\pi(T)\|:\pi:\mathbb{C}[Z, A]^G \to B(H') \mbox{~is~a~} *\mbox{-representation}\right\}.$$
\end{defn}

Next, we shall recall the concept of localization algebras.
%We remark that the $K$-theory of localization algebras is  computable.
\begin{defn}\leavevmode
\begin{enumerate}
    \item[(1)] The algebraic maximal localization algebra $\mathbb{C}_{max,L}[Z, A]^G$ is defined to be the $*$-algebra of all uniformly bounded and uniformly continuous functions $f:[0, \infty)\to C^*_{max}(Z, A)^G$ such that
     $$\mbox{propagation}(f(t))\to 0, \text{~~as~~} t \to \infty.$$
    The maximal localization algebra $C_{max,L}^*(Z, A)^G$ is defined to be the completion of $\mathbb{C}_{max,L}[Z, A]^G$ under the norm
    $$\|f\|=\sup_{t\in [0,\infty)} \|f(t)\|_{max},$$
    for all $f \in \mathbb{C}_{max,L}[Z, A]^G$.
   \item[(3)] The algebraic localization algebra $\mathbb{C}_{L}[Z, A]^G$ is defined to be the uniformly bounded and uniformly continuous functions $f:[0, \infty)\to C^*(Z, A)^G$ such that
    $$\mbox{propagation}(f(t))\to 0, \text{~~as~~} t \to \infty.$$
   The localization algebra $C_L^*(Z, A)^G$ is defined to be the completion of $\mathbb{C}_{L}[Z, A]^G$ under the norm
    $$\|f\|=\sup_{t\in [0,\infty)} \|f(t)\|,$$
    for all $f \in \mathbb{C}_{L}[Z, A]^G$.
\end{enumerate}
\end{defn}

%================================================================
\begin{comment}
By the universality of the maximal norm, the identity map on the algebraic Roe algebra $\mathbb{C}[Z,A]^G$ extends to a $*$-homomorphism
$$\lambda: C_{max}^*(Z, A)^G \to C^*(Z, A)^G.$$
Similarly, we have a $*$-homomorphism
$$\lambda_L: C_{max, L}^*(Z, A)^G \to C_L^*(Z, A)^G$$
extended from the identity map on the algebraic localization algebras.
We would like to point out that it is usually hard to determine when the $*$-homomorphism $\lambda$ is isomorphic, while the map $\lambda_{L}$ always induces an isomorphism on $K$-theory. The following result is a consequence of the Mayer--Vietoris sequence and the the five lemma arguments.
\begin{prop}
Let $Z$ be any finite-dimensional simplicial complex and $G$ a countable discrete group. Assume that $X$ admits a proper $G$-action by isometries, then the map
$$
\lambda_{L,*}: K_*(C_{max, L}^*(Z, A)^G) \to K_*(C_L^*(Z, A)^G)
$$
induced by $\lambda_L: C_{max, L}^*(Z, A)^G \to C_L^*(Z, A)^G$ on $K$-theory is an isomorphism.
\end{prop}
\end{comment}
%================================================
Naturally, we have the evaluation map from the maximal localization algebra to the maximal Roe algebra
$$
e: C_{max,L}^*(Z, A)^G \to C_{max}^*(Z, A)^G
$$
by
$$e(f)=f(0)$$
for all $f \in C_{max,L}^*(Z, A)^G$.
Similarly, we have the evaluation map
$$
e:C_{L}^*(Z, A)^G \to C^*(Z, A)^G.
$$
These evaluation maps induce homomorphisms
$$
e_{*}:K_*(C_{max,L}^*(Z, A)^G) \to K_*(C_{max}^*(Z, A)^G)
$$
and
$$
e_{*}:K_*(C_{L}^*(Z, A)^G) \to K_*(C^*(Z, A)^G).
$$
at the level of $K$-theory.

\subsection{Rips complex}
In this subsection, we review the definition of Rips complexes of a countable discrete group $\Gamma$, and the construction of a model of the universal space for proper $\Gamma$-actions by using Rips complexes.

Let $\Gamma$ be any countable discrete group. A proper $\Gamma$-space $\underline{E}\Gamma$ is said to be universal if it is a metrizable with the quotient space $\underline{E}\Gamma/\Gamma$ paracompact and if for every proper metrizable $\Gamma$-space $X$ with $X/\Gamma$ paracompact then there is a $\Gamma$-equivariant continuous map $X\to \underline{E}\Gamma$, unique up to $\Gamma$-equivariant homotopy.

 % In this section, we shall choose the union of Rips complex $\cup_{s>0}P_s(G)$ as a model of the universal $G$-space $\underline{E}G$.

Let us recall the definition of Rips complexes. For brevity, we assume that the groups we consider are finitely generated.
\begin{defn}  Let $\Gamma$ be a finitely generated group with a word length metric $d$. Let $s\geqslant 0$. The \textit{Rips complex} of $\Gamma$ at scale $s$, denoted $P_s(\Gamma)$, is the simplicial complex with vertex set $\Gamma$, and a subset $\{\gamma_0,\cdots,\gamma_n\}$ of $\Gamma$ spans a simplex if and only if $d(\gamma_i,\gamma_j)\leqslant s$ for all $i,j$.
\end{defn}

Each Rips complex $P_s(\Gamma)$ is equipped with the spherical metric. Recall that the spherical metric is the maximal metric whose restriction to each simplex $\left\{\sum_{i=0}^n c_i t_i\right\} \subset P_s(\Gamma)$ is the metric obtained by identifying this simplex with
$$S^n_+=\left\{(t_0,t_1,\cdots,t_n): \sum_{i=0}^n t^2_i=1, t_i\geq 0, ~\forall~ 0\leq i \leq n\right\}\subset \mathbb{R}^{n+1}$$
by
$$
\left(c_0,c_1,\cdots, c_n\right) \mapsto \left(\frac{c_0}{\sqrt{\sum_{i=0}^n c_i^2} }, \frac{c_1}{\sqrt{\sum_{i=0}^n c_i^2} },\cdots, \frac{c_n}{\sqrt{\sum_{i=0}^n c_i^2} }\right)
$$
where $S^n_+\subset \mathbb{R}^{n+1}$ is equipped with the standard Riemannian metric.

Now we define a $\Gamma$-action on $P_s(\Gamma)$. For each $x=\sum_{\gamma\in\Gamma}t_\gamma\gamma\in P_s(\Gamma)$ and $g \in \Gamma$, define
$$g \cdot (\sum_{\gamma\in\Gamma}t_\gamma\gamma)=\sum_{\gamma\in\Gamma}t_\gamma g\gamma.$$
 It is obvious that this $\Gamma$-action is proper.

Let $G$ and $\Gamma$ be finitely generated groups, and $h:G \to \Gamma$ a group homomorphism. Assume that $S \subset G$ is a finite and symmetric generating subset of $G$ in the sense that $S$ is finite and $g^{-1} \in S$ for each $g \in S$. One can define a left invariant word length metric $d_G$ on $G$ associated to the generating subset $S$. In addition, there exists a finite and symmetric generating subset $S' \subset \Gamma$ of $\Gamma$ containing $h(S)$. One obtains a left invariant metric $d_{\Gamma}$ on $\Gamma$ such that $d_{\Gamma}(h(g_1),h(g_2))\leq d_G(g_1,g_2)$ for all $g_1,g_2$ in $G$.
 For each $s>0$, the map $h$ extends to a continuous map
$$h: P_s(G) \to P_s(\Gamma),$$
by
$$h(\sum_{\gamma\in\Gamma}t_\gamma\gamma)=\sum_{\gamma\in\Gamma}t_\gamma h(\gamma)$$
for each $\sum_{\gamma\in\Gamma}t_\gamma\gamma \in P_s(\Gamma)$. Note that
$$d_{P_s(\Gamma)}(h(x),h(y))\leq d_{P_s(G)}(x, y)$$
for all $s>0$ and all $x, y \in P_s(G)$.

\subsection{Relative Roe algebras and relative localization algebras}\label{section-relative-Roe}
In this section, we  will define the relative Roe algebra and relative localization algebra associated with a group homomorphism $h: G \to \Gamma$.
\begin{defn}
A $C^*$-algebra $A$ is called a $(G,\Gamma)$-$C^*$-algebra if $A$ is a $G$-$C^*$-algebra and $\Gamma$-$C^*$-algebra simultaneously, and $g\cdot a=h(g)\cdot a$ for all $g\in G$, $a\in A$.
\end{defn}

Note that if $A$ is a $(G,\Gamma)$-$C^\ast$-algebra, the restriction of the $G$-action to the subgroup $\ker(h)\subset G$ is trivial.

There are natural $*$-homomorphisms (cf. \cite{Willett-Yu-higher-index-book})
$$
h_{max, s}: C^*_{max}(P_s(G), A)^G \to C^*_{max}(P_s(\Gamma), A)^{\Gamma}
$$
and
$$
h_{max, L, s}: C^*_{max, L}(P_s(G), A)^G \to C^*_{max, L}(P_s(\Gamma), A)^{\Gamma},
$$

If the homomorphism $h: G \to \Gamma$ has amenable kernel, then $h$ induces a homomorphism from $C_{red}^\ast(G)$ to $C^\ast_{red}(\Gamma)$.  Indeed, since the trivial representation of $\ker(h)$ is weakly contained in the regular representation of $\ker(h)$, by continuity of induction the regular representation of $G/\ker(h)$ is weakly contained in the regular representation of $G$. It follows that the reduced $C^*$-algebra of $G$ maps onto the reduced $C^\ast$-algebra of $h(G)\cong G/\ker(h)$. Moreover, the inclusion of $h(G)$ into $\Gamma$ induces an injective $*$-homomorphism from $C^*_{red}(h(G))$ into $C^*_{red}(\Gamma)$. By identifying equivariant Roe algebras with the stabilization of reduced group $C^*$-algebras (see Section 5.3 in \cite{Willett-Yu-higher-index-book}),  we also have $*$-homomorphisms
$$
h_{red, s}: C^*(P_s(G), A)^G \to C^*(P_s(\Gamma), A)^{\Gamma}
$$
and
$$
h_{red, L, s}: C^*_{L}(P_s(G), A)^G \to C^*_{L}(P_s(\Gamma), A)^{\Gamma}
$$
in the reduced cases.

To define relative Roe algebras, we shall recall the concept of the mapping cone associated to a $*$-homomorphism between $C^*$-algebras.
\begin{defn}
Given a $*$-homomorphism $h:A\rightarrow B$ between $C^*$-algebras $A$ and $B$, the  \textit{mapping cone} $C_h$ of $h$ is defined to be $$C_h:=\{(a,f)| a\in A, f\in C_0([0,1),B), h(a)=f(0)\}.$$
\end{defn}

Let $i: C_0(0,1) \otimes B \to C_h$ be the $*$-homomorphism defined by $i(f)=(0,f)$ for all $a \in C_0((0,1), B)$, and $j: C_h\to A$ the $*$-homomorphism by $j(a,f)=a$ for all $(a,f) \in C_h$, where $C_0(0,1) \otimes B$ is the suspension of $B$ which can be viewed as the $C^*$-algebra of all continuous functions from the open interval $(0,1)$ to $B$ that vanish at two ends. The short exact sequence
$$
0 \to C_0(0,1)\otimes B \xlongrightarrow{i} C_h \xlongrightarrow{j} A \to 0
$$
induces the following six-term long exact sequence:
\begin{equation*}
    \xymatrix{
    K_{1}(B)\ar[r] & K_0(C_h) \ar[r] & K_0(A) \ar[d] \\
    K_{1}(A)\ar[u] & K_{1}(C_h) \ar[l] & K_{0}(B)\ar[l] }
\end{equation*}

With the $*$-homomorphisms between Roe algebras and localization algebras, we can define the relative Roe algebras and the relative localization algebras as the suspension of the mapping cones of those $*$-homomorphisms.
\begin{defn}
Let $h: G \to \Gamma$ be a homomorphism between finitely generated groups and $A$ a $G$-$\Gamma$-$C^*$-algebra. Let $s>0$.
\begin{enumerate}
    \item[(1)] The maximal relative Roe algebra $C_{max}^*(P_s(G), P_s(\Gamma), A)^{G,\Gamma}$ is defined to be the suspension of the mapping cone associated with the $*$-homomorphism
   $$h_{max, s}: C_{max}^*(P_s(G), A)^{G} \to C_{max}^*(P_s(\Gamma), A)^{\Gamma}.$$
    \item[(2)] The maximal relative localization algebra $C_{max,L}^*(P_s(G), P_s(\Gamma), A)^{G,\Gamma}$ is the suspension of the mapping cone associated with the $*$-homomorphism
    $$h_{max, s,L}: C_{max,L}^*(P_s(G), A)^G \to C_{max,L}^*(P_s(\Gamma), A)^{\Gamma}.$$
\end{enumerate}
\end{defn}
We have the following six-term exact sequences:
\[
\adjustbox{scale=0.95,center}{
  \begin{tikzcd}[column sep=1.2em]
    K_{1}(C_{max}^*(P_s(\Gamma),A)^{\Gamma})\ar[r] & K_1(C_{max}^{*}(P_{s}(G),P_{s}(\Gamma),A)^{G,\Gamma}) \ar[r] & K_0(C_{max}^*(P_s(G),A)^{G}) \ar[d] \\
    K_{1}(C_{max}^*(P_s(G),A)^{G})\ar[u] & K_{0}(C_{max}^{*}(P_{s}(G),P_{s}(\Gamma),A)^{G,\Gamma}) \ar[l] & K_{0}(C_{max}^*(P_s(\Gamma),A)^\Gamma)\ar[l],
 \end{tikzcd}
}
\]
and
\begin{equation*}
	\adjustbox{scale=0.93,center}{
\begin{tikzcd}[column sep=1.2em]
    K_{1}(C_{max,L}^*(P_s(\Gamma), A)^\Gamma)\ar[r] & K_1(C_{max,L}^{*}(P_{s}(G), P_{s}(\Gamma),A)^{G,\Gamma}) \ar[r] & K_0(C_{max,L}^*(P_s(G),A)^{G}) \ar[d] \\
    K_{1}(C_{max,L}^*(P_s(G), A)^{G})\ar[u] & K_{0}(C_{max,L}^{*}(P_{s}(G), P_{s}(\Gamma),A)^{G,\Gamma}) \ar[l] & K_{0}(C_{max,L}^*(P_s(\Gamma),A)^\Gamma)\ar[l].
\end{tikzcd}
}
\end{equation*}
Note that for any $r\leqslant s$, there exist natural inclusions
$$C_{max}^{*}(P_{r}(G), P_{r}(\Gamma), A)^{G,\Gamma}\rightarrow C_{max}^{*}(P_{s}(G), P_{s}(\Gamma), A)^{G,\Gamma},$$
and
$$C_{max,L}^{*}(P_{r}(G), P_{r}(\Gamma), A)^{G,\Gamma}\rightarrow C_{max,L}^{*}(P_{s}(G), P_{s}(\Gamma), A)^{G,\Gamma}.$$
Thus we have an inductive system $\big\{C_{max,L}^{*}(P_{s}(G), P_{s}(\Gamma), A)^{G,\Gamma}\big\}_{s\in[0,\infty)}$. We shall define the relative $K$-homology for the pair $(G, \Gamma)$ using this inductive system.

\begin{defn}
Given a group homomorphism $h\colon G\rightarrow\Gamma$ and a $(G, \Gamma)$-$C^*$-algebra $A$, the relative equivariant $K$-homology with coefficients in $A$ of $h\colon G\to \Gamma$ is defined to be the inductive limit
$$K_{*}^{G,\Gamma}(\underline{E}G,\underline{E}\Gamma,A):=\lim\limits_{r\rightarrow \infty}K_*(C_{max,L}^{*}(P_{r}(G), P_{r}(\Gamma),A)^{G,\Gamma}).$$
\end{defn}

Now we are ready to define the evaluation map from the relative localization algebras to the relative Roe algebras.

For each $(a,f)\in C_{max,L}^{*}(P_{s}(G), P_{s}(\Gamma), A)^{G,\Gamma}$, we can view $a$ as a continuous path $\left(a(t)\right)_{t \in [0,\infty)}$ in $\mathbb{C}_{max, L}[P_s(G), A]^G$ and view $f$ as a collection of continuous paths $\left(f_r(t)\right)_{r \in [0,1], t \in [0,\infty)}$ in $\mathbb{C}_{max,L}[P_s(\Gamma), A]^{\Gamma}$.
By the definition of the norm on the mapping cone, for each $s$ we have a natural evaluation map
$$e:C_{max,L}^{*}(P_{s}(G), P_{s}(\Gamma),A)^{G,\Gamma} \to C_{max}^{*}(P_{s}(G), P_{s}(\Gamma),A)^{G,\Gamma}$$
defined by
$$e(a,f)=(a(0), f(0)),$$
for all $(a, f) \in C_{max,L}^{*}(P_{r}(G), P_{r}(\Gamma),A)^{G, \Gamma}$.
Passing to inductive limit, we have a homomorphism
$$e_*: K^{G,\Gamma}_*(\underline{E}G,\underline{E}\Gamma,A) \to \lim\limits_{s \to \infty}K_*(C^*_{max}(P_s(G), P_s(\Gamma), A)^{G,\Gamma}).$$

Similarly, we can define the reduced relative Roe algebras and relative localization algebras in the case when the kernel of the homomorphism $h: G \to \Gamma$ is amenable.
\begin{defn}
Let $h:G\rightarrow\Gamma$ be a homomorphism between finitely generated groups with $\ker(h)$ amenable and $A$ a $(G,\Gamma)$-$C^*$-algebra. Let $s>0$.
\begin{enumerate}
  \item[(1)] The relative Roe algebra, $C^{*}(P_{s}(G), P_{s}(\Gamma),A)^{G,\Gamma}$, is defined to be the suspension of the mapping cone of the $*$-homomorphism
$$h_{red, s}:C^*(P_s(G), A)^G \to C^*(P_s(\Gamma), A)^{\Gamma}.$$
  \item[(2)] The relative localization algebra, $C^{*}_L(P_{s}(G), P_{s}(\Gamma),A)^{G, \Gamma}$, to be the suspension of the mapping cone of the $*$-homomorphism
$$h_{red, s,L}:C_L^*(P_s(G), A)^G \to C_L^*(P_s(\Gamma), A)^{\Gamma}.$$
\end{enumerate}

\end{defn}

We have defined the relative $K$-homology groups using maximal localization algebras. We can also consider the reduced relative localization algebras when the group homomorphism $h: G \to \Gamma$ has amenable kernel. In fact, these two relative $K$-homology groups coincide. Following the same arguments in the proof of \cite[Theorem 3.2]{Yu_localization}, the $K$-theory of the maximal localization algebra is identical with the $K$-theory of the reduced localization algebra. It follows from the five lemma that
 $$K_*(C_{L}^{*}(P_{s}(G), P_{s}(\Gamma),A)^{G,\Gamma})\cong K_*(C_{max,L}^{*}(P_{s}(G), P_{s}(\Gamma),A)^{G,\Gamma}).$$
As a result, we have that
$$K_{*}^{G,\Gamma}(\underline{E}G,\underline{E}\Gamma,A)=\lim\limits_{s\rightarrow \infty}K_*(C_{L}^{*}(P_{s}(G), P_{s}(\Gamma),A)^{G,\Gamma}),$$
when $h: G \to \Gamma$ has  amenable  kernel.
Moreover, in this case, we also have the evaluation map
$$e:C_{L}^{*}(P_{s}(G), P_{r}(\Gamma),A)^{G,\Gamma} \to C^{*}(P_{s}(G), P_{s}(\Gamma),A)^{G,\Gamma}$$
defined by
$$e(a,f)=(a(0), f(0)),$$
for all $(a, f) \in C_{L}^{*}(P_{s}(G), P_{s}(\Gamma),A)^{G, \Gamma}$, which induces the homomorphism
$$e_*: K^{G,\Gamma}_*(\underline{E}G,\underline{E}\Gamma,A) \to \lim\limits_{s \to \infty}K_*(C^*(P_s(G), P_s(\Gamma), A)^{G,\Gamma}),$$

%\subsection{Relative group $C^*$-algebras}
%
%In this subsection, we shall establish the relationship between relative Roe algebras and relative group $C^*$-algebras associated to a group homomorphism.
%
%
%Let $A$ be a $C^*$-algebra with an action of a countable discrete group $G$. Denote by $C_c(G, A)$ the set of all $A$-valued functions on $G$ with compact support. Each element in $C_c(G, A)$ can be expressed as a finite sum $\sum_{g \in G} a_g g$. In fact, $C_c(G, A)$ is a $*$-algebra endowed with the multiplication
%$$\left(\sum_{g \in G} a_g g\right)\left(\sum_{g \in G} b_g g\right)=\sum_{st}a_s s(b_t)st$$
%and the $*$-operation
%$$\left(\sum_{g \in G} a_g g\right)^*=\sum_{g \in G} g^{-1}(a^*_g)g^{-1},$$
%for all $\sum_{g \in G} a_g g, \sum_{g \in G} b_g g \in C_c(G, A)$.
%Denote by $C^*_{max}(G, A)$ and $C^*_{red}(G, A)$ the $C^*$-algebras under the maximal and the reduced $*$-norm on $C_c(G, A)$, respectively.

%Let $G$ and $\Gamma$ be finitely generated groups and $h:G\rightarrow\Gamma$ a group homomorphism. For any $(G,\Gamma)$-$C^*$-algebra $A$,
Let $C^*_{max}(G, A)$ and $C^*_{red}(G, A)$ be the maximal and reduced $C^*$-crossed product of $G$ and $A$, respectively. Then
$h\colon G\to \Gamma$ induces a $*$-homomorphism
$$h_{max}:C_{max}^*(G,A)\rightarrow C_{max}^*(\Gamma,A).$$
If $h$ has amenable  kernel, then $h$ induces a $*$-homomorphism $$h:C_{red}^*(G,A)\rightarrow C_{red}^*(\Gamma,A).$$
We define $C_{max}^*(G,\Gamma,A)$ to be the suspension of the mapping cone of $h_{max}$, and   we call it the $\emph{maximal relative group C*-algebra}$ of $(G,\Gamma)$ with coefficients in $A$.
If the  kernel of $h$ is amenable,  then we can likewise define the \emph{reduced relative group $C^*$-algebra} $C_{red}^*(G,\Gamma, A)$. We have
$$
C^*_{max}(P_s(G), P_s(\Gamma), A)^{G, \Gamma} \cong C^*_{max}(G, \Gamma, A)\otimes K
$$
for each $s>0$, where $K$ is the algebra of compact operators on Hilbert space.
If the homomorphism $h: G \to \Gamma$ has amenable kernel, we have that
$$
C^*_{red}(P_s(G), P_s(\Gamma), A)^{G, \Gamma} \cong C^*_{red}(G, \Gamma, A)\otimes K,
$$
for each $s>0$.

\begin{comment}
We can also get the reduced analogues of Proposition \ref{prop-relative-Roe-and-relarive-group-C-alg} and Corollary \ref{cro-commute-relative-Roe-and-group-alg}. For all $r<s$, we have the commutative diagram
\begin{equation*}
  \begin{tikzcd}
   K_{*}(C^{*}(P_{r}(G), P_{r}(\Gamma),A)^{G,\Gamma}) \ar{d} \ar{r}&K_{*}(C_{red}^{*}(G,\Gamma,A))\ar[equal]{d}\\
 K_{*}(C^{*}(P_{s}(G), P_{s}(\Gamma),A)^{G,\Gamma}) \ar{r}& K_{*}(C_{red}^{*}(G,\Gamma,A))\\
 \end{tikzcd}
\end{equation*}
when $h:G\rightarrow \Gamma$ has amenable  kernel.
\end{comment}

\begin{defn}\leavevmode
\begin{enumerate}
    \item[(1)] The \emph{maximal relative Baum-Connes assembly map} $$\mu_{max}:K_{*}^{G,\Gamma}(\underline{E}G,\underline{E}\Gamma,A)\rightarrow K_*(C_{max}^*(G,\Gamma,A))$$ is defined to be the homomorphism
        $$e_*:\lim\limits_{s\to \infty}K_*(C^*_{max,L}(P_s(G), P_s(\Gamma), A)^{G,\Gamma})\to \lim\limits_{s\to \infty}K_*(C_{max}^*(P_s(G), P_s(\Gamma), A)^{G,\Gamma}).$$
    \item[(2)] When the homomorphism $h:G\rightarrow \Gamma$ has amenable  kernel, the \emph{reduced relative Baum-Connes assembly map}  $$\mu_{red}:K_{*}^{G,\Gamma}(\underline{E}G,\underline{E}\Gamma,A)\rightarrow K_*(C_{red}^*(G,\Gamma,A)),$$
   is defined to be the homomorphism
    $$e_*:\lim\limits_{s\to \infty}K_*(C^*_L(P_s(G), P_s(\Gamma), A)^{G,\Gamma})\to \lim\limits_{s\to \infty}K_*(C^*(P_s(G), P_s(\Gamma), A)^{G,\Gamma}).$$
\end{enumerate}
\end{defn}

Let us state  the maximal and reduced strong relative Novikov conjectures.

\begin{conj}[Maximal Strong Relative Novikov Conjecture]\label{conj:max}
Let $h:G\rightarrow \Gamma$ be a group homomorphism between countable discrete groups $G$ and $\Gamma$, and $A$ a $(G, \Gamma)$-$C^*$-algebra. The \emph{maximal relative Baum-Connes assembly map} $$\mu_{max}:K_{*}^{G,\Gamma}(\underline{E}G,\underline{E}\Gamma, A)\rightarrow K_*(C_{max}^*(G,\Gamma, A))$$
is injective.
\end{conj}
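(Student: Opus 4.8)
The plan is to prove injectivity by comparing the relative assembly map with the absolute (maximal) assembly maps for $G$ and for $\Gamma$, exploiting the fact that the relative Roe and localization algebras are, by construction, suspensions of the mapping cones of $h_{max,s}$ and $h_{max,L,s}$. Since the evaluation maps $e$ commute with the mapping-cone structure maps, passing to $K$-theory and taking the inductive limit over $s$ produces a commutative ladder between the two six-term exact sequences displayed above: the localization sequence, whose relative term is $K^{G,\Gamma}_*(\underline{E}G,\underline{E}\Gamma,A)$, and the Roe sequence, whose relative term is $K_*(C^*_{max}(G,\Gamma,A))$. The vertical arrows are precisely $\mu_{max}$ together with the absolute maximal assembly maps
\[
\mu^G_{max}\colon K^G_*(\underline{E}G,A)\to K_*(C^*_{max}(G,A)),\qquad
\mu^\Gamma_{max}\colon K^\Gamma_*(\underline{E}\Gamma,A)\to K_*(C^*_{max}(\Gamma,A)).
\]

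First I would reduce injectivity of $\mu_{max}$ to statements about $\mu^G_{max}$ and $\mu^\Gamma_{max}$ by a diagram chase in this ladder (the monomorphism form of the four lemma). Tracing a class $x$ with $\mu_{max}(x)=0$ around the rows shows $x=0$ provided $\mu^\Gamma_{max}$ is injective, $\mu^G_{max}$ is injective, and $\mu^G_{max}$ is suitably surjective onto the part detected by $h_\ast$. This is where the geometric hypotheses must enter, and I emphasize that the statement is posed as a \emph{conjecture} precisely because these inputs are not available unconditionally (property (T) groups obstruct the maximal conjecture). Under the hypotheses of the subsequent theorems the needed injectivity can be supplied as follows. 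For $\Gamma$: coarse embeddability into Hilbert space gives injectivity of the reduced assembly map $\mu^\Gamma_{red}$, and since the localization $K$-theory is insensitive to the maximal-versus-reduced distinction one has $\mu^\Gamma_{red}=\lambda_\ast\circ\mu^\Gamma_{max}$, forcing $\mu^\Gamma_{max}$ to be injective. For $G$: the image $h(G)\cong G/\ker(h)$ is a subgroup of $\Gamma$ and hence also coarsely embeds, while the maximal good kernel property furnishes the maximal Baum--Connes conjecture with coefficients for every $G'\supseteq\ker(h)$ of finite index over $\ker(h)$; feeding these into an extension/going-down argument along $1\to\ker(h)\to G\to h(G)\to 1$ yields injectivity of $\mu^G_{max}$, the finite-index subgroups $G'$ being exactly what is required to handle the finite subgroups of the quotient in the going-down machine.

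The technical engine underlying the two coarse-embedding inputs is the Dirac--dual-Dirac / Bott-map construction over an infinite-dimensional Hilbert space in its maximal form; the bounded-geometry estimate of \cite[Lemma 3.4]{Gong-Wang-Yu} is what keeps the maximal norms on the associated twisted Roe algebras under control so that the construction survives maximal completion. The point that makes the \emph{relative} statement work, rather than merely the two absolute ones, is that the Hilbert-space embedding of $\Gamma$ restricts to one of $h(G)$, so the dual-Dirac data for $G$ and for $\Gamma$ can be chosen compatibly with $h_\ast$ and therefore descend to the mapping cone; meanwhile the $\ker(h)$-direction, on which $G$ acts trivially on $A$, is absorbed by the good kernel property rather than by the embedding.

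The hard part will be twofold. First, the naive diagram chase genuinely demands a surjectivity input for $\mu^G_{max}$ that coarse embeddability alone does not provide, since the quotient $h(G)$ only yields injectivity; the resolution I would pursue is to reorganize the argument as a single compatible dual-Dirac section for the pair $(G,\Gamma)$, producing a one-sided inverse to $\mu_{max}$ directly at the level of mapping cones and thereby bypassing the surjectivity hypothesis of the four lemma. Second, and more delicate, is maintaining uniform control of the maximal norms across the entire suspension/mapping-cone construction and across the inductive limit in $s$, since maximal completions are not as functorially well-behaved as reduced ones; this is exactly where the bounded-geometry lemma and the compatibility of the two twisted constructions along $h$ must be combined with care.
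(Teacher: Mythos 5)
Your setup---the ladder of six-term exact sequences induced by the mapping-cone definition of the relative Roe and localization algebras, with $\mu_{max}$ flanked by the absolute assembly maps for $G$ and $\Gamma$---is exactly the skeleton the paper uses, and you are right both that the statement is only a conjecture in this generality and that the paper's actual result (Theorem \ref{thm-maxi-Novikov}) establishes it under the maximal good kernel property plus coarse embeddability of $\Gamma$. But your argument has a genuine gap, which to your credit you flag yourself: the four-lemma chase requires $\mu^{G}_{max}$ to be \emph{surjective} in degree $*+1$, and none of your inputs supplies this---coarse embeddability of $h(G)\subseteq\Gamma$ yields at best injectivity via the reduced map, and the maximal good kernel property concerns subgroups between $\ker(h)$ and its finite-index overgroups, not $G$ itself. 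Your two proposed repairs remain unexecuted plans: a going-down argument for $\mu^{G}_{max}$ with trivial coefficients along $1\to\ker(h)\to G\to h(G)\to 1$ is not an off-the-shelf tool in the maximal setting (and would at most give injectivity, still leaving the four lemma starved), while the ``single compatible dual-Dirac section'' producing a one-sided inverse at the mapping-cone level is precisely the hard content, not a reduction of it.

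The paper closes this gap by a different mechanism: it changes coefficients so that \emph{both} absolute assembly maps become isomorphisms, whereupon the five lemma applies and no surjectivity hypothesis is needed on the trivial-coefficient side. Concretely, from the coarse embedding of $\Gamma$ one builds the proper $\Gamma$-$C^*$-algebra $\mathcal{A}(X)$ (the continuous-field Higson--Kasparov--Trout algebra over the space $X$ of probability measures on the compactification $X'$), proves that the relative Bott map $\beta^{G,\Gamma}_{L,*}\colon K^{G,\Gamma}_*(\underline{E}G,\underline{E}\Gamma,\mathcal{S})\to K^{G,\Gamma}_*(\underline{E}G,\underline{E}\Gamma,\mathcal{A}(X))$ is an isomorphism (Proposition \ref{prop-rel-Bott}, a five-lemma consequence of absolute Bott periodicity), and then shows $\mu^{\mathcal{A}(X)}_{max}$ is an isomorphism: for $\Gamma$ because assembly with proper coefficients is always an isomorphism, and for $G$ by cutting-and-pasting---writing $\mathcal{A}(X)=\varinjlim\mathcal{A}_\alpha$ with each $W_\alpha=\bigcup_i Y_i\times_{F_i}h(G)$, and using $C^*_{max}(G,B)\cong C^*_{max}(h^{-1}(F_i),B_0)\otimes K$ to reduce to the maximal Baum--Connes conjecture with coefficients for the subgroups $h^{-1}(F_i)$, which is exactly where the good kernel property enters. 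Your instinct that the finite-index-over-kernel subgroups are there ``to handle the finite subgroups of the quotient'' is realized here, but applied to the proper coefficient algebra $\mathcal{A}(X)$ rather than to trivial coefficients. Injectivity of $\mu_{max}$ then drops out of one commuting square relating $\mu_{max}$ and $\mu^{\mathcal{A}(X)}_{max}$ through the Bott maps; this is the rigorous version of your ``compatible dual-Dirac'' idea, except that no dual-Dirac element or one-sided inverse is ever constructed---it suffices that the Bott map be an isomorphism on the $K$-homology side, with all the analytic weight carried by properness of $\mathcal{A}(X)$ and the good kernel hypothesis.
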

We can also define a reduced analogue of the above conjecture. Since the reduced relative group $C^*$-algebra is not defined for general homomorphisms between groups, we shall state the reduced version of the conjecture under the extra assumption that the group homomorphism $h: G \to \Gamma$ has amenable kernel.

\begin{conj}[Reduced Strong Relative Novikov Conjecture]\label{conj:reduce}
Let $h:G\rightarrow \Gamma$ be a group homomorphism between countable discrete groups $G$ and $\Gamma$ and $A$ a $(G, \Gamma)$-$C^*$-algebra. When the homomorphism $h:G\rightarrow \Gamma$ has amenable  kernel, the \emph{reduced relative Baum-Connes assembly map} $$\mu_{red}:K_{*}^{G,\Gamma}(\underline{E}G,\underline{E}\Gamma)\rightarrow K_*(C_{red}^*(G,\Gamma))$$ is injective.\\
\end{conj}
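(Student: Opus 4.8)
The plan is to prove injectivity of $\mu_{red}$ by a mapping-cone comparison, reducing it to the ordinary reduced Baum--Connes assembly maps for $G$ and for $\Gamma$ separately. Recall that both sides of $\mu_{red}$ are built as suspensions of mapping cones: the relative group $C^*$-algebra $C^*_{red}(G,\Gamma,A)$ is the suspension of the mapping cone of $h_{red}\colon C^*_{red}(G,A)\to C^*_{red}(\Gamma,A)$ --- an object that exists precisely because $\ker(h)$ is amenable, which is exactly the hypothesis of the conjecture --- while the relative localization algebra is the suspension of the mapping cone of $h_{red,L,s}\colon C^*_L(P_s(G),A)^G\to C^*_L(P_s(\Gamma),A)^\Gamma$. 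Each mapping cone produces a six-term exact sequence, so $K_*(C^*_{red}(G,\Gamma,A))$ sits in an exact sequence with $K_*(C^*_{red}(G,A))$ and $K_*(C^*_{red}(\Gamma,A))$, and, after passing to the limit over $s$, $K^{G,\Gamma}_*(\underline{E}G,\underline{E}\Gamma,A)$ sits in an exact sequence with $K^G_*(\underline{E}G,A)$ and $K^\Gamma_*(\underline{E}\Gamma,A)$, with connecting maps induced by $h_*$.

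First I would assemble these into a commuting ladder of six-term sequences. The evaluation maps $e(a,f)=(a(0),f(0))$ are defined compatibly on all the mapping-cone algebras, so they give a morphism from the localization six-term sequence to the Roe six-term sequence. Passing to the inductive limit over $s$ and invoking the stabilization isomorphism $C^*_{red}(P_s(G),P_s(\Gamma),A)^{G,\Gamma}\cong C^*_{red}(G,\Gamma,A)\otimes K$ (and its $G$- and $\Gamma$-factor analogues), the vertical maps on the two ends become exactly the reduced Baum--Connes assembly maps $\mu_G\colon K^G_*(\underline{E}G,A)\to K_*(C^*_{red}(G,A))$ and $\mu_\Gamma\colon K^\Gamma_*(\underline{E}\Gamma,A)\to K_*(C^*_{red}(\Gamma,A))$, with $\mu_{red}$ sitting in the middle. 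Commutativity of the ladder is the naturality of the suspension-of-mapping-cone construction and of the evaluation map in the $*$-homomorphism $h_{red,(L),s}$.

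Next I would run the Four Lemma on this ladder. Around the term $K^{G,\Gamma}_*$ the five consecutive vertical maps read $\mu_G,\ \mu_\Gamma,\ \mu_{red},\ \mu_G,\ \mu_\Gamma$, and a diagram chase shows $\mu_{red}$ is injective once $\mu_\Gamma$ is injective, $\mu_G$ is surjective in the preceding degree, and $\mu_G$ is injective in the following degree. Carrying this out in both parities, it suffices that $\mu_G$ is an isomorphism --- the reduced Baum--Connes conjecture with coefficients for $G$ --- and that $\mu_\Gamma$ is injective. The last step is then to extract these two inputs from the amenable-kernel hypothesis alone: since $h(G)\cong G/\ker(h)$ is a subgroup of $\Gamma$, the with-coefficients conjecture for $\Gamma$ restricts to $h(G)$, and then, using the extension $1\to\ker(h)\to G\to h(G)\to 1$ together with the Higson--Kasparov theorem for the amenable group $\ker(h)$ and the permanence of the assembly map under extensions with amenable kernel, it ascends to $G$; injectivity of $\mu_\Gamma$ is immediate from the same input for $\Gamma$.

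The hard part --- and the reason the statement is posed as a conjecture --- is precisely this last step: the amenable-kernel hypothesis by itself delivers neither the isomorphism of $\mu_G$ nor the injectivity of $\mu_\Gamma$, since the ladder reduces the relative conjecture to the ordinary with-coefficients Baum--Connes conjecture for the ambient group $\Gamma$, which is genuinely open (and is even known to fail for certain groups built from expanders). In other words, the mapping-cone comparison is unconditional, but it cannot make the $\Gamma$-input any cheaper than Baum--Connes itself. This is exactly why the theorems one can actually prove, such as Theorem \ref{thmb}, replace the bare amenable-kernel hypothesis by geometric conditions --- coarse embeddability of $\Gamma$ into Hilbert space, which supplies the injectivity of $\mu_\Gamma$ through Yu's coarse index machinery, together with a good-kernel condition securing the $G$-side --- under which the present strategy goes through.
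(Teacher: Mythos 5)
There is no proof in the paper to compare against: the statement you were given is Conjecture \ref{conj:reduce}, posed as an open conjecture, and your proposal rightly declines to manufacture a proof. Your unconditional part---the evaluation maps as a morphism of six-term mapping-cone sequences, stabilization of the relative Roe algebras, and a four/five-lemma chase---is exactly the mechanism the paper itself deploys in its conditional results: Proposition \ref{prop-relative-BC-for-hyperbolic} and Proposition \ref{thm-relative-real-BC} run this ladder verbatim under the hypothesis that the absolute assembly maps for $G$ and $\Gamma$ are isomorphisms, and your four-lemma refinement ($\mu_G$ an isomorphism plus $\mu_\Gamma$ injective already forces $\mu_{red}$ injective) is a correct mild strengthening of what the paper states. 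Your diagnosis of why the statement is a conjecture is also essentially right, with one nuance: amenability of $\ker(h)$ \emph{does} yield the good kernel property of Definition \ref{weakgamma}, since every subgroup containing $\ker(h)$ with finite index over it is amenable-by-finite, hence a-T-menable, so Higson--Kasparov applies; and your ascent of Baum--Connes with coefficients from $h(G)$ to $G$ through the amenable-kernel extension is legitimate (Chabert--Echterhoff/Oyono-Oyono permanence). What is genuinely missing is the input about the ambient group $\Gamma$, exactly as you say.

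One substantive correction to your closing paragraph: Theorem \ref{thmb} is \emph{not} proved by feeding ``injectivity of $\mu_\Gamma$ from coarse embeddability'' into your ladder for a fixed coefficient algebra. The paper instead changes coefficients: the relative Bott map $\beta^{G,\Gamma}_{L,*}$ (Proposition \ref{prop-rel-Bott}) is an isomorphism, transferring the problem to coefficients in the proper $\Gamma$-$C^*$-algebra $\mathcal{A}(X)$ built from the coarse embedding; with these coefficients \emph{both} absolute assembly maps become isomorphisms---the $\Gamma$-side because $\mathcal{A}(X)$ is proper, the $G$-side by the cutting-and-pasting argument, which reduces everything to the subgroups $h^{-1}(F_i)$ of finite index over $\ker(h)$, i.e.\ precisely the good-kernel hypothesis---and the five lemma then gives an isomorphism of $\mu^{\mathcal{A}(X)}$, whence injectivity of the original relative assembly map from the commuting Bott square. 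This coefficient change is the real gain over your ladder: it never requires Baum--Connes for all of $G$, only for the almost-kernel subgroups, which is why the paper's provable theorems carry the good-kernel hypothesis rather than Baum--Connes for $G$; and it avoids invoking injectivity of $\mu_\Gamma$ with arbitrary coefficients for coarsely embeddable $\Gamma$, which is a stronger and more delicate input than anything the paper actually uses.
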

In Section 3, we shall define a reduced strong relative Novikov conjecture for all group homomorphisms with the help of ${\rm II}_1$-factors.

When the group $G$ is trivial, it follows from the five lemma that the injectivity of the relative assembly map is equivalent to  the injectivity of the following (absolute) assembly map
$$\mu_{red}: K^{\Gamma}_*(\underline{E}\Gamma) \to K_*(C^*_{red}(\Gamma)).$$
The injectivity of $\mu_{red}$ is the usual strong Novikov conjecture, which has been verified for a large class of groups  \cite{Apa-Valett-Julg-BC-survey,Chen-Fu-Wang-zhou, CM,Chen-Wang-Wang-FCE, KasSkan, Laff-hyperbolic, Skan_Tu_Yu, Yu_fin_asym_dim, Yu_coa_emb, Yu_survey_on_Novikov}.

\section{Relative Baum--Connes conjecture}
In this section, we shall introduce a reduced relative Baum--Connes assembly map for all group homomorphisms $h\colon G\to \Gamma$ between countable discrete groups with the help of ${\rm II}_1$-factors. We show that this  relative Baum--Connes assembly map is an isomorphism when both $G$ and $\Gamma$ are  hyperbolic groups.
\subsection{Relative reduced assembly map}\label{sect-red-assembly-map}

It is known that for any countable discrete group $G$, there exists a ${\rm II}_1$-factor $\mathcal M$ such that there is a trace-preserving embedding $\phi: C^*_{red}(G) \hookrightarrow \mathcal M$. Indeed, every group $G$ can be viewed as a subgroup of an ${\rm ICC}$ group\footnote{A group is said to be an ICC group, or to have the infinite conjugacy class property, if the conjugacy class of every  element but the identity element is infinite.} $G'$ whose group von Neumann algebra $L(G')$ is a $\rm II_1$-factor. We can take $\mathcal  M=L(G')$.

Now we shall use the embedding $\phi$ to define the reduced relative assembly map. Let us equip $\mathcal M$ with the trivial actions of $G$ and $\Gamma$. For any group homomorphism $h:G\rightarrow\Gamma$,  we  define a $*$-homomorphism from  $C_{red}^*(G, \mathcal M)$ to $C_{red}^*(\Gamma,\mathcal M\otimes \mathcal M)$ as follows.

\begin{lem}\label{lemmakey}
 Let $h:G\rightarrow\Gamma$ be any group homomorphism between countable discrete groups and  $\mathcal M$ a  ${\rm II}_1$-factor endowed with  a trace-preserving embedding  $\phi: C^*_{red}(G)\to \mathcal M$.
Then there exists a $*$-homomorphism
$$h_{red,\mathcal M}: C_{red}^*(G, \mathcal M) \rightarrow  C_{red}^*(\Gamma,\mathcal M\otimes \mathcal M)$$ by
  $$h_{red, \mathcal M}(\sum a_gg)=\sum \left(a_g\otimes \phi(g) \right)h(g) $$
  for all $\sum a_gg \in C^*_{red}(G, \mathcal M)$, where  the actions of $G$ and $\Gamma$ on $\mathcal M$ are trivial.
\end{lem}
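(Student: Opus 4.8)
The plan is to verify the formula defines a $*$-homomorphism on the algebraic level and then to obtain continuity for the reduced norms by factoring the map through a genuine $*$-homomorphism of reduced group $C^*$-algebras. First I would record that, since $G$ and $\Gamma$ act trivially on the coefficients, the reduced crossed products split as minimal tensor products: $C^*_{red}(G,\mathcal M)\cong \mathcal M\otimes_{\min} C^*_{red}(G)$ and $C^*_{red}(\Gamma,\mathcal M\otimes\mathcal M)\cong (\mathcal M\otimes\mathcal M)\otimes_{\min} C^*_{red}(\Gamma)$. Writing $u_g$ and $v_\gamma$ for the canonical unitaries, the proposed formula reads $a\otimes u_g\mapsto (a\otimes\phi(u_g))\otimes v_{h(g)}$. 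A direct check on the dense subalgebra $\mathbb{C}[G,\mathcal M]$ shows it is multiplicative and $*$-preserving: multiplicativity follows from $(ag)(bk)=ab\,gk$ (trivial action) together with $\phi(u_g)\phi(u_k)=\phi(u_{gk})$, while the identity $(ag)^*=a^*g^{-1}$ is matched using $\phi(u_g)^*=\phi(u_{g^{-1}})$.

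The main obstacle is continuity, since $h$ itself need not induce any $*$-homomorphism $C^*_{red}(G)\to C^*_{red}(\Gamma)$ unless $\ker(h)$ is amenable — this is precisely the difficulty the $\mathrm{II}_1$-factor is designed to bypass. To overcome it I would use the graph homomorphism $(\mathrm{id}_G,h)\colon G\to G\times\Gamma$, $g\mapsto(g,h(g))$, which is \emph{injective}. Since subgroup inclusions are functorial for reduced group $C^*$-algebras (the left regular representation of a subgroup is contained in the restriction of the ambient one), this injective homomorphism induces a $*$-homomorphism $C^*_{red}(G)\to C^*_{red}(G\times\Gamma)\cong C^*_{red}(G)\otimes_{\min} C^*_{red}(\Gamma)$ sending $u_g\mapsto u_g\otimes v_{h(g)}$. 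Composing with $\phi\otimes\mathrm{id}$ then yields a $*$-homomorphism $\beta\colon C^*_{red}(G)\to \mathcal M\otimes_{\min} C^*_{red}(\Gamma)$ with $\beta(u_g)=\phi(u_g)\otimes v_{h(g)}$.

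Finally I would set $h_{red,\mathcal M}=\mathrm{id}_{\mathcal M}\otimes\beta$ (post-composed, if $\mathcal M\otimes\mathcal M$ denotes the von Neumann tensor product, with the canonical inclusion $\mathcal M\otimes_{\min}\mathcal M\hookrightarrow\mathcal M\otimes\mathcal M$). As the minimal tensor product is functorial for $*$-homomorphisms, this is a genuine $*$-homomorphism between the reduced crossed products, and evaluating it on $a\otimes u_g$ gives $(a\otimes\phi(u_g))\otimes v_{h(g)}$, which is exactly the claimed formula $\sum a_g g\mapsto\sum(a_g\otimes\phi(g))h(g)$. The conceptual heart of the argument is that replacing the non-extendable $h$ by the injective graph map — with the factor $\phi(g)$ recording the group element faithfully inside $\mathcal M$ — is what makes the construction survive passage to the reduced completions.
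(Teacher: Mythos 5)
Your proposal is correct and follows essentially the same route as the paper: the key step in both is to replace the non-extendable $h$ by the injective graph homomorphism $g\mapsto(g,h(g))$ into $G\times\Gamma$, which does induce a $*$-homomorphism on reduced completions, and then to apply $\phi\otimes\mathrm{id}$ and identify the resulting tensor products. Your version merely makes explicit some details the paper leaves implicit (the splitting of crossed products by trivial actions as minimal tensor products, the algebraic verification of the formula, and the minimal-versus-von Neumann tensor product point), but the argument is the same.
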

\begin{proof} Given a group homomorphism $h:G\rightarrow\Gamma$, we consider the map
$$G\stackrel{h^1}{\longrightarrow} G\times\Gamma$$
defined by
$$h^1(g)=(g,h(g)),$$
for all $g \in G$. Notice that $h^1$ is injective, thus $h^1$ induces a $*$-homomorphism
$$
h^1_{red}: C_{red}^*(G, \mathcal M) \to C^*_{red}(G \times \Gamma, \mathcal M).
$$
Therefore, we have that
 \begin{eqnarray*}
C_{red}^*(G, \mathcal M)&\stackrel{{h^1_{red}}}{\longrightarrow}&C_{red}^*(G\times\Gamma, \mathcal M)\\
&\stackrel{\cong}{\longrightarrow} &C_{red}^*(G)\otimes C_{red}^*(\Gamma, \mathcal M)\\
&\stackrel{\phi\otimes id}{\longrightarrow}&\mathcal M \otimes C_{red}^*(\Gamma, \mathcal M)\\
&\stackrel{\cong}{\longrightarrow}& C_{red}^*(\Gamma,\mathcal M\otimes \mathcal M).\\
\end{eqnarray*}
The composition of the above maps gives  the map $h_{red, \mathcal M}$.
\end{proof}

We denote by $C_{h_{red,\mathcal M}}$  the mapping cone of the map $h_{red, \mathcal  M}:  C_{red}^*(G, \mathcal M) \rightarrow  C_{red}^*(\Gamma,\mathcal M\otimes \mathcal M)$.
\begin{defn}
Define the relative group $C^*$-algebra with coefficients in the ${\rm II}_1$-factor $\mathcal{M}$ to be
  $$
  C_{red}^*(G, \Gamma, \mathcal M)=C_0(\mathbb{R})\otimes   C_{h_{red, \mathcal M}}
  .$$
\end{defn}

\begin{rmk}
\begin{enumerate}
  \item  It is known that $K_0(\mathcal{M})=\mathbb{R}$ and $K_1(\mathcal{M})=0$ for any ${\rm II}_1$-factor $\mathcal{M}$. If the group $G$ is amenable, then the group $C^*$-algebra $C^*_{red}(G)$ is nuclear. By K\"{u}nneth's formula for $K$-theory of operator algebras, we have that
  $$
  K_*(C^*_{red}(G, \mathcal{M}))\cong K_*(C^*_{red}(G))\otimes\mathbb{R}.
  $$
 %In fact, the isomorphism between $K$-theory is induced by the homomorphism $\mbox{id}\otimes I: A\to A \otimes \mathcal{M}$, where $I \in\mathcal{M}$ is the identity element and $\mbox{id}: A \to A$ is the identity map.  The inverse of the induced map on $K$-theory is the map induced by $\mbox{id}\otimes \mbox{Tr}: A\otimes \mathcal{M}\to A$, where $\mbox{Tr}: \mathcal{M}\to \mathbb{C}$ is the canonical trace of $\mathcal{M}$.
  \item We remark here that the assumption that the embedding $\phi: C_{red}(G)\to \mathcal{M}$ is trace-preserving is crucial to encode the data of the homomorphism $h:G \to \Gamma$ in the definition of relative group $C^*$-algebras. In the case when $G$ and $\Gamma$ are amenable, the homomorphism
      $$h_{red, \mathcal  M, *}:  K_*(C_{red}^*(G, \mathcal M)) \rightarrow  K_*(C_{red}^*(\Gamma,\mathcal M\otimes \mathcal M))$$
 induced by the map $h_{red, \mathcal  M}:  C_{red}^*(G, \mathcal M) \rightarrow  C_{red}^*(\Gamma,\mathcal M\otimes \mathcal M)$ is the same as the map
 $$
 h_*\otimes \mbox{id}:K_*(C^*_{red}(G))\otimes \mathbb{R}\to K_*(C_{red}^*(\Gamma))\otimes \mathbb{R},
 $$
 where $h_*:K_*(C^*_{red}(G))\to K_*(C_{red}^*(\Gamma))$ is the map induced by the $*$-homomorphism $h: C^*_{red}(G)\to C^*_{red}(\Gamma)$.

 %On the contrast, if $G$ and $\Gamma$ are amenable and the embedding $\phi: C^*_{red}(G)\to \mathcal{M}$ induces the zero map on $K$-theory, it follows from the six-term exact sequence that
 %$$ K_*(C^*(G, \Gamma, \mathcal M)^{G, \Gamma})=K_{*+1}(C^*_{red}(G), \mathcal{M})\oplus K_{*}(C^*_{red}(\Gamma), \mathcal{M} \otimes \mathcal{M}),$$for $*=0, 1$.  In this case the $K$-theory of the relative group $C^*$-algebra does not encode any information of the homomorphism $h: G \to \Gamma$.
\end{enumerate}
\end{rmk}

Now let us introduce the relative K-homology with coefficients in a ${\rm II}_1$-factor. Following the construction of the $*$-homomorphism between localization algebras, we have a $*$-homomorphism
$$
h_{L, \mathcal M}:C_{L}^*(P_s(G), \mathcal M)^G \to C_{L}^*(P_{s}(\Gamma), \mathcal M \otimes \mathcal M)^{\Gamma}.
$$
Let $C_{h_{L,\mathcal M}}$ be the mapping cone of the $*$-homomorphism $h_{L,\mathcal M}$.
For each $s>0$, we define the relative localization algebra with coefficients in $\mathcal M$ to be
$$
C_{L}^{*}(P_s(G), P_{s}(\Gamma), \mathcal M)^{G, \Gamma}=C_0(\mathbb{R})\otimes C_{h_{L,\mathcal M}}.$$
For each $s>0$, there is a natural evaluation map
$$
e: C^*_{L}(P_s(G), P_s(\Gamma), \mathcal M)^{G, \Gamma} \to C^*(P_s(G), P_s(\Gamma), \mathcal M)^{G, \Gamma}
.$$
For all $r<s$, we have the following commutative diagram
\begin{equation*}
\adjustbox{scale=0.95,center}{
	\begin{tikzcd}[column sep=1.3em]
 K_*(C_{L}^{*}(P_{r}(G),P_{r}(\Gamma), \mathcal M)^{G, \Gamma}) \ar{r}{e_*} \ar{d}
& K_*(C^{*}(P_{r}(G), P_{r}(\Gamma), \mathcal M)^{G, \Gamma}) \ar{r}{\cong} \ar{d}& K_*(C_{red}^*(G,\Gamma, \mathcal M)) \ar[equal]{d}\\
 K_*(C_{L}^{*}(P_{s}(G), P_{s}(\Gamma), \mathcal M)^{G, \Gamma}) \ar{r}{e_*}&
 K_*(C^{*}(P_{s}(G), P_{s}(\Gamma), \mathcal M)^{G, \Gamma}) \ar{r}{\cong}& K_*(C_{red}^*(G,\Gamma, \mathcal M))
\end{tikzcd}
}
\end{equation*}
 Thanks to this compactibility we can define the relative $K$-homology and the relative assembly map with coefficients in $\mathcal  M$ as the following inductive limits.

\begin{defn}
 The \textit{relative equivariant K-homology with coefficients in $\mathcal M$} is defined as the inductive limit
 $$K_{*}^{G,\Gamma}(\underline{E}G,\underline{E}\Gamma, \mathcal M)
 :=\lim_{s\to \infty}K_*(C_{L}^{*}(P_s(G), P_{s}(\Gamma), \mathcal M)^{G, \Gamma}).$$
\end{defn}
%We remark that the $K$-homology $K_{*}^{G,\Gamma}(\underline{E}G,\underline{E}\Gamma, \mathcal M)$ is computable.
\begin{defn}
The \emph{reduced  relative Baum-Connes assembly map} $$\mu_{red}:K_{*}^{G,\Gamma}(\underline{E}G,\underline{E}\Gamma, \mathcal M)\rightarrow K_*(C_{red}^*(G,\Gamma,\mathcal  M))$$
is defined to be the inductive limit of the homomorphisms
$$e_*: K_*(C^*_L(P_s(G), P_s(\Gamma), \mathcal M)) \to K_*(C^*(P_s(G), P_s(\Gamma), \mathcal M)).$$

\end{defn}

\begin{conj}[Reduced Strong Relative Novikov Conjecture]
Assume that $h: G \to \Gamma$ is a homomorphism  between countable discrete groups $G$ and $\Gamma$. Let $\mathcal{M}$ be a ${\rm II}_1$-factor such that there exists a trace-preserving embedding $\phi: C^*_{red}(G) \to \mathcal{M}$.
The \emph{reduced relative Baum-Connes assembly map}
$$\mu_{red}:K_{*}^{G,\Gamma}(\underline{E}G,\underline{E}\Gamma, \mathcal M)\rightarrow K_*(C_{red}^*(G,\Gamma, \mathcal M))$$
is injective.
\end{conj}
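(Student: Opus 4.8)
The plan is to reduce the injectivity of the relative assembly map $\mu_{red}$ to the (absolute) Baum--Connes conjecture for the individual groups $G$ and $\Gamma$, with coefficients in the ${\rm II}_1$-factors $\mathcal M$ and $\mathcal M\otimes\mathcal M$ respectively, and then to finish by a five lemma argument. The key structural input is that, by their very definitions, the relative localization algebra $C^*_{L}(P_s(G),P_s(\Gamma),\mathcal M)^{G,\Gamma}$ and the relative group $C^*$-algebra $C^*_{red}(G,\Gamma,\mathcal M)$ are suspensions of the mapping cones of $h_{L,\mathcal M}$ and of $h_{red,\mathcal M}$. Each mapping cone gives rise to a six-term exact sequence in $K$-theory, and the evaluation maps $e$ on the $G$-side and the $\Gamma$-side are compatible with $h_{L,\mathcal M}$ and $h_{red,\mathcal M}$, so they assemble into a morphism of mapping cones. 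Passing to the inductive limit over $s$ (which is legitimate by the compatibility diagram already established in the excerpt), I obtain a commutative ladder between the two long exact sequences in which the relative assembly map $\mu_{red}$ sits in the middle vertical position.

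The two outer vertical maps of this ladder are exactly the absolute assembly maps
$$\mu_G\colon \lim_{s}K_*\big(C^*_{L}(P_s(G),\mathcal M)^G\big)\to K_*\big(C^*_{red}(G,\mathcal M)\big)$$
and
$$\mu_\Gamma\colon \lim_{s}K_*\big(C^*_{L}(P_s(\Gamma),\mathcal M\otimes\mathcal M)^\Gamma\big)\to K_*\big(C^*_{red}(\Gamma,\mathcal M\otimes\mathcal M)\big),$$
namely the usual Baum--Connes assembly maps for $G$ and for $\Gamma$ with the indicated von Neumann algebra coefficients. Once both $\mu_G$ and $\mu_\Gamma$ are known to be isomorphisms, the five lemma forces $\mu_{red}$ to be an isomorphism, hence in particular injective, settling the conjecture whenever the absolute conjecture is available for $G$ and $\Gamma$ with these coefficients. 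When both $G$ and $\Gamma$ are hyperbolic I would invoke Lafforgue's theorem that hyperbolic groups satisfy the Baum--Connes conjecture with coefficients, and if one only wants injectivity it suffices to have the (strong) Novikov-type injectivity of $\mu_G$ and $\mu_\Gamma$, which is where the coarse embeddability hypotheses of the main theorems enter.

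The main obstacle is precisely this last upgrade: the coefficients $\mathcal M$ and $\mathcal M\otimes\mathcal M$ are ${\rm II}_1$-factors rather than separable $C^*$-algebras, so the classical Baum--Connes machinery does not apply verbatim. Here I would follow the framework of Antonini, Azzali and Skandalis on the Baum--Connes conjecture localised at a trace: the trace-preserving embedding $\phi$ endows $\mathcal M$ with a normalized trace, and one works with the resulting trace localisation so that the Dirac--dual-Dirac (equivalently, the $\gamma$-element) argument for hyperbolic groups continues to produce the required isomorphisms on $K$-theory after localisation. The technical heart of the argument is to check that the $\gamma$-element still acts as the identity once localised at the trace, and that the inductive limit over the Rips complexes $P_s(G)$ and $P_s(\Gamma)$ is compatible with this localisation; granting these, the remaining work is the purely formal diagram chase through the two mapping-cone six-term sequences and the application of the five lemma.
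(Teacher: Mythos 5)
You should first note that the statement you were given is labeled a \emph{conjecture}: the paper does not prove it for arbitrary $h\colon G\to\Gamma$, and neither does your proposal. What the paper actually establishes are two special cases, and your proposal tracks one of them faithfully while breaking down on the other. Your mapping-cone ladder --- the two six-term sequences from the suspensions of the cones of $h_{L,\mathcal M}$ and $h_{red,\mathcal M}$, compatibility of the evaluation maps, passage to the limit over $s$, and the five lemma once $\mu_G$ and $\mu_\Gamma$ (with coefficients $\mathcal M$ and $\mathcal M\otimes\mathcal M$) are isomorphisms --- is exactly the paper's proof of Propositions \ref{prop-relative-BC-for-hyperbolic} and \ref{thm-relative-real-BC}, including the invocation of Lafforgue for hyperbolic $G$ and $\Gamma$. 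However, your final paragraph's detour through Antonini--Azzali--Skandalis localisation at the trace and a ``$\gamma$-element acts as the identity after localisation'' verification is not needed and is not what the paper does: Lafforgue's theorem is quoted for an \emph{arbitrary} $G$-$C^*$-algebra, the ${\rm II}_1$-factor carries the trivial action, and the AAS framework enters the paper only as motivation for the definition of $C^*_{red}(G,\Gamma,\mathcal M)$ (plus the independence-of-$\mathcal M$ remark via $K_*^G(\underline{E}G,\mathcal M)\cong K_*^G(\underline{E}G)\otimes\mathbb R$).

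The genuine gap is your claim that ``if one only wants injectivity it suffices to have the (strong) Novikov-type injectivity of $\mu_G$ and $\mu_\Gamma$,'' which is where you try to absorb the coarse-embeddability hypotheses of Theorem \ref{thm-reduced-Novi-real-coef}. This step fails: in the four-lemma chase through the ladder $K^G_{*+1}\to K^\Gamma_{*+1}\to K^{G,\Gamma}_{*+1}\to K^G_*\to K^\Gamma_*$, injectivity of the middle vertical map requires, besides injectivity of the two flanking verticals, \emph{surjectivity} of $\mu_G$ in the adjacent degree --- and coarse embeddability yields only injectivity of the absolute assembly maps, never surjectivity. The paper avoids this exact trap by changing coefficients rather than chasing the original ladder: it builds the proper $\Gamma$-$C^*$-algebra $\mathcal A(X)$ from the coarse embedding of $\Gamma$ (a continuous field of Higson--Kasparov--Trout algebras over the space $X$ of probability measures), proves the relative Bott map $\beta^{G,\Gamma}_{L,*}\colon K^{G,\Gamma}_*(\underline{E}G,\underline{E}\Gamma,\mathcal S\otimes\mathcal M)\to K^{G,\Gamma}_*(\underline{E}G,\underline{E}\Gamma,\mathcal A(X)\otimes\mathcal M)$ is an isomorphism (Proposition \ref{prop-rel-Bott} and its reduced analogue), and then shows both absolute assembly maps with $\mathcal A(X)$-coefficients are \emph{isomorphisms}: for $\Gamma$ because $\mathcal A(X)$ is proper, and for $G$ by a cutting-and-pasting decomposition $W_\alpha=\bigcup_i Y_i\times_{F_i}h(G)$ that reduces the problem to the subgroups $h^{-1}(F_i)\supseteq\ker(h)$ of finite index over the kernel --- which is precisely where the reduced good kernel property of Definition \ref{weakgamma} enters. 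Only then does the five lemma apply (to the $\mathcal A(X)$-ladder, where all outer maps are isomorphisms), and injectivity of the original $\mu_{red}$ follows from the commuting square $\beta_*\circ\mu_{red}=\mu^{\mathcal A(X)}_{red}\circ\beta^{G,\Gamma}_{L,*}$. Your proposal contains no counterpart to the Bott periodicity transfer, no cutting-and-pasting step, and no role at all for the hypothesis on $\ker(h)$, which is indispensable input: without some such condition the conjecture remains open.
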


\begin{conj}[Relative Baum--Connes Conjecture]
Assume that $h: G \to \Gamma$ is a homomorphism  between countable discrete groups $G$ and $\Gamma$. Let $\mathcal{M}$ be a ${\rm II}_1$-factor such that there exists a trace-preserving embedding $\phi: C^*_{red}(G) \to \mathcal{M}$. The \emph{reduced relative Baum-Connes assembly map}
$$\mu_{red}:K_{*}^{G,\Gamma}(\underline{E}G,\underline{E}\Gamma, \mathcal M)\rightarrow K_*(C_{red}^*(G,\Gamma,\mathcal  M))$$
is an isomorphism.
\end{conj}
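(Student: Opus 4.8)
\section*{Proof proposal}

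The statement is a conjecture, and in full generality it lies beyond current methods; the realistic goal is to establish the isomorphism whenever the two underlying ``absolute'' Baum--Connes conjectures with coefficients are already available, most notably when both $G$ and $\Gamma$ are hyperbolic. The plan is to reduce the relative assertion to those absolute ones by means of the mapping-cone six-term exact sequence together with the five lemma.

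First I would unwind the definitions. By construction $C_{red}^*(G,\Gamma,\mathcal M)=C_0(\mathbb{R})\otimes C_{h_{red,\mathcal M}}$ and $C_{L}^{*}(P_s(G),P_s(\Gamma),\mathcal M)^{G,\Gamma}=C_0(\mathbb{R})\otimes C_{h_{L,\mathcal M}}$ are suspensions of the mapping cones of the map $h_{red,\mathcal M}\colon C_{red}^*(G,\mathcal M)\to C_{red}^*(\Gamma,\mathcal M\otimes\mathcal M)$ of Lemma \ref{lemmakey} and of its localized analogue $h_{L,\mathcal M}$. Since tensoring with $C_0(\mathbb{R})$ merely shifts the $K$-theory degree and preserves isomorphisms (as does stabilization by $K$), it suffices to prove that the evaluation map induces an isomorphism $\lim\limits_{s\to\infty}K_*(C_{h_{L,\mathcal M}})\to K_*(C_{h_{red,\mathcal M}})$.

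Next I would assemble the ladder of six-term exact sequences coming from the two mapping cones, the vertical arrows being the evaluation maps $e$. The two outer columns of this ladder are honest absolute Baum--Connes assembly maps. On the $G$-side one obtains $e_*\colon \lim\limits_{s\to\infty}K_*(C_L^*(P_s(G),\mathcal M)^G)\to \lim\limits_{s\to\infty}K_*(C^*(P_s(G),\mathcal M)^G)$, which under the identification $C^*(P_s(G),\mathcal M)^G\cong C_{red}^*(G,\mathcal M)\otimes K$ is precisely $\mu_{red}$ for $G$ with coefficients in $\mathcal M$; on the $\Gamma$-side one obtains the analogous assembly map for $\Gamma$ with coefficients in $\mathcal M\otimes\mathcal M$. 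When $G$ and $\Gamma$ are hyperbolic, both of these are isomorphisms by Lafforgue's verification of the Baum--Connes conjecture with coefficients for hyperbolic groups \cite{Laff-hyperbolic}, the trivial-action coefficient algebras $\mathcal M$ and $\mathcal M\otimes\mathcal M$ being admissible. Because inductive limits are exact and commute with $K$-theory, the limiting ladder is again a ladder of exact sequences, and the five lemma forces the middle column $e_*\colon \lim\limits_{s\to\infty}K_*(C_{h_{L,\mathcal M}})\to K_*(C_{h_{red,\mathcal M}})$ to be an isomorphism, which is the desired conclusion.

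The main obstacle is the bookkeeping around the trace-preserving embedding $\phi$. One must check that the localized map $h_{L,\mathcal M}$ and the reduced map $h_{red,\mathcal M}$ are genuinely intertwined by the evaluation maps, so that the two mapping-cone exact sequences do form a commutative ladder; and one must verify that the change of coefficients on the $\Gamma$-side from $\mathcal M$ to $\mathcal M\otimes\mathcal M$, introduced by $\phi$ through the factorization $G\stackrel{h^1}{\longrightarrow}G\times\Gamma$ and $\phi\otimes\mathrm{id}$ in Lemma \ref{lemmakey}, is carried along so that the $\Gamma$-corner is correctly recognized as the assembly map with coefficients $\mathcal M\otimes\mathcal M$ rather than some unrelated map. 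Establishing that these corner evaluation maps coincide with the honest absolute assembly maps, uniformly in $s$ and compatibly with the inductive-limit structure, is where the argument will require the most care.
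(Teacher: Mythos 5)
You correctly recognize that the statement is a conjecture and, exactly as the paper does, establish it only when the absolute Baum--Connes conjecture with coefficients holds for both $G$ and $\Gamma$ (in particular for hyperbolic groups via Lafforgue's theorem): your ladder of mapping-cone six-term exact sequences with evaluation maps as vertical arrows, the identification of the outer columns with the absolute assembly maps with coefficients $\mathcal M$ and $\mathcal M\otimes\mathcal M$, and the five lemma constitute precisely the paper's proof of Propositions \ref{prop-relative-BC-for-hyperbolic} and \ref{thm-relative-real-BC}. The compatibility points you flag as delicate (that $h_{L,\mathcal M}$ and $h_{red,\mathcal M}$ are intertwined by evaluation, and that the $\Gamma$-corner carries coefficients $\mathcal M\otimes\mathcal M$) are exactly the commutativity facts the paper records in Section \ref{sect-red-assembly-map} before invoking the diagram, so your proposal matches the paper's argument in substance and in detail.
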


% make the following a remark.
\begin{rmk}\leavevmode
\begin{enumerate}
\item[(1)] We remark that the definition of the reduced Baum--Connes assembly map for $h\colon G\to \Gamma$ does not depend on the choice of the trace-preserving embedding $\phi: C^*_{red}(G) \to \mathcal{M}$. Note that every Rips complex $P_d(G)$ can be express as a finite union $P_d(G)=\cup_{i} G\cdot X_i$ where each $X_i$ is a precompact and open subset of $P_d(G)$ which is $F_i$-invariant for some finite subgroup $F_i$ of $G$, and $gX_i\cap X_i=\emptyset$ for all $g\in G-F_i$. We have that

    \begin{align*}
    K_*(C^*_L(G\cdot X_i, \mathcal{M})^G) &\cong K_*(C^*_L(X_i, \mathcal{M})^{F_i})\\
                                                                & \cong K_*(C^*(X_i, \mathcal{M})^{F_i})\\
                                                                &  \cong K_*(C^*(X_i)^{F_i})\otimes \mathbb{R}\\
                                                                 &  \cong K_*(C_L^*(X_i)^{F_i})\otimes \mathbb{R}\\
                                                                & \cong K_*(C^*_L(G\cdot X_i, \mathcal{M})^G)\otimes \mathbb{R}.
     \end{align*}
The first and the last equality follow from the definition of the localization algebras. The second and the fourth equality follow from the Baum--Connes conjecture for finite groups and the third equality follows from the K\"{u}nneth formula for $K$-theory of operator algebras. It follows from the six-term exact sequence for the $K$-theory of localization algebras and the five lemma that
$$
K_{*}^G(\underline{E}G,\mathcal M)\cong K_*^G(\underline{E}G)\otimes \mathbb{R}.
$$
%$$K_{*}^{G,\Gamma}(\underline{E}G,\underline{E}\Gamma, \mathcal  M)\cong K_*^{G,\Gamma}(\underline{E}G, \underline{E}\Gamma)\otimes \mathbb{R}.$$
Therefore, the definition of $K$-homology $K_{*}^{G,\Gamma}(\underline{E}G,\underline{E}\Gamma, \mathcal  M)$ does not depend on the choice of the ${\rm II}_1$-factor $\mathcal{M}$ by the five lemma.

\item[(2)] Let $BG$ and $B\Gamma$ be the classifying space for $G$ and $\Gamma$, respectively. There is a natural map
$$h: BG \to B\Gamma$$
induced by the group homomorphism $h: G \to \Gamma$. The injectivity of the reduced relative Baum-Connes assembly map $$\mu_{red}:K_{*}^{G,\Gamma}(\underline{E}G,\underline{E}\Gamma, \mathcal M)\to K_*(C_{red}^*(G,\Gamma, \mathcal M))$$
implies that the relative assembly map
   $$\mu_{red}:K_*(BG,B\Gamma)\otimes \mathbb{R} \to K_*(C^*_{red}(G, \Gamma,\mathcal  M))$$
is injective, the latter of which we shall now review.

Now, let us define the relative $K$-homology group $K_*(BG,B\Gamma)$. Following the construction in Section \ref{section-relative-Roe}, we can construct $*$-homomorphisms
$$h_{L}: C^*_{ L}(BG) \to C^*_{L}(B\Gamma),$$
and
$$
h_{L}: C^*_{ \mathcal{M}, L}(BG, \mathcal{M}) \to C^*_{L}(B\Gamma, \mathcal{M}\otimes \mathcal{M}),
$$
induced by the continuous map $h: BG \to B\Gamma$.
Define the relative $K$-homology group $K_*(BG, B\Gamma)$ to be the $K$-theory of the suspension of the mapping cone associated to the $*$-homomorphism
$$h_{L}: C^*_{ L}(BG) \to C^*_{L}(B\Gamma).$$
By the five lemma, the relative $K$-homology group $K_*(BG, B\Gamma)\otimes \mathbb{R}$ is equivalent to the $K$-theory of the suspension of the mapping cone associated to the $*$-homomorphism
$$h_{\mathcal{M}, L}: C^*_{L}(BG, \mathcal{M}) \to C^*_{L}(B\Gamma, \mathcal{M}\otimes \mathcal{M}).$$
Following the constructions in \cite{Baum-Conn_chern-character}, we have the relative local index map
$$
\sigma_{G, \Gamma}: K_*(BG, B\Gamma)\otimes \mathbb{R} \to K_{*}^{G,\Gamma}(\underline{E}G,\underline{E}\Gamma, \mathcal M).
$$
By the Connes--Chern character \cite{Baum-Conn_chern-character}, we know that the
$ K_*(BG)\otimes \mathbb{R}$ is a summand of the $K$-homology group $ K_{*}^{G}(\underline{E}G, \mathcal M)$. It follows that the relative local index map
$$
\sigma_{G, \Gamma}: K_*(BG, B\Gamma)\otimes \mathbb{R} \to K_{*}^{G,\Gamma}(\underline{E}G,\underline{E}\Gamma, \mathcal M)
$$
is injective.

In summary, we have the composition
\begin{equation*}
  \begin{tikzcd}
   K_*(BG,B\Gamma)\otimes \mathbb{R}  \ar{r}{\sigma_{G,\Gamma}} &K_{*}^{G,\Gamma}(\underline{E}G,\underline{E}\Gamma, \mathcal M) \ar{r}{\mu_{red}} & K_*(C^*_{red}(G, \Gamma,\mathcal  M))
  \end{tikzcd}
\end{equation*}
which we still call the relative assembly map. For simplicity, we also denote it by $\mu_{red}$. As a result, the injectivity of the reduced relative Baum--Connes assembly map $\mu_{red}:K_{*}^{G,\Gamma}(\underline{E}G,\underline{E}\Gamma, \mathcal M)\to K_*(C_{red}^*(G,\Gamma, \mathcal M))$  implies that the relative assembly map
$$\mu_{red}:K_*(BG,B\Gamma)\otimes \mathbb{R} \to K_*(C^*_{red}(G, \Gamma,\mathcal  M))$$
is injective.

\item[(3)] Similarly, there is also the  maximal relative assembly map
   $$\mu_{max}:K_*(BG,B\Gamma)\otimes \mathbb{\mathbb R} \to K_*(C^*_{max}(G, \Gamma)) \otimes \mathbb{\mathbb R}.$$
  Moreover,  the injectivity of the maximal relative Baum--Connes assembly map $\mu_{max}:K_{*}^{G,\Gamma}(\underline{E}G,\underline{E}\Gamma, \mathcal M)\to K_*(C_{max}^*(G,\Gamma))$ implies the injectivity of $\mu_{max}:K_*(BG,B\Gamma)\otimes \mathbb{R} \to K_*(C^*_{max}(G, \Gamma)) \otimes \mathbb{R}$.
\end{enumerate}
\end{rmk}

\subsection{Relative Baum--Connes conjecture  for hyperbolic groups}
We will conclude this subsection by showing that any pair of hyperbolic groups $(G, \Gamma)$ satisfies the relative Baum--Connes conjecture with coefficients in a ${\rm II}_1$-factor $\mathcal{M}$.

%The notion of hyperbolicity was introduced and developed by Gromov \cite{Gromov-hyperbolic} which was inspired by a geometric property of triangles in the hyperbolic space $\mathbb{H}^n$.

\begin{defn}[Gromov, \cite{Gromov-hyperbolic}]
Let $G$ be a finitely generated group equipped with a left invariant word length metric. The group $G$ is said to be hyperbolic if there exists a constant $\delta>0$ such that each geodesic triangle is $\delta$-thin in the sense that for any $x,y,z \in G$, the geodesic, denoted by $[x,y]$, joining $x$ and $y$, is contained the $\delta$-neighborhood of the union of other two geodesics $[x,z]$ and $[y,z]$.
\end{defn}

Lafforgue showed that the Baum--Connes conjecture with coefficients holds for all  hyperbolic groups \cite{Laff-hyperbolic}.

\begin{thm}[\cite{Laff-hyperbolic}]\label{thm-BC-hyperbolic}
Let $G$ be a hyperbolic group and $A$ any $G$-$C^*$-algebra. Then the Baum--Connes conjecture with coefficients in $A$ holds for $G$, i.e. the Baum--Connes assembly map
$$\mu: K^G_*(\underline{E}G, A) \to K_*(C^*_{red}(G,A))$$
is an isomorphism.
\end{thm}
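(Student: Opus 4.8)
The plan is to establish this via the Dirac--dual-Dirac method, carried out in Lafforgue's Banach $KK$-theoretic framework. The geometric reduction is standard: a hyperbolic group $G$ acts properly and cocompactly on the bolic space associated to its Cayley graph (its Rips complex, with the boundary dynamics controlled by $\delta$-thinness of triangles), and by the work of Kasparov--Skandalis this geometry produces a $\gamma$-element $\gamma\in KK^G_*(\mathbb{C},\mathbb{C})$ together with Dirac and dual-Dirac classes whose Kasparov product is $\gamma$. The dual-Dirac class already yields a splitting showing that the assembly map $\mu$ is \emph{split injective} for coefficients in any $G$-$C^*$-algebra $A$; this is the strong Novikov half of the statement. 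The whole problem therefore reduces to proving \emph{surjectivity}, which in this formalism is equivalent to showing that $\gamma$ acts as the identity on $K_*(C^*_{red}(G,A))$.

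The \emph{main obstacle} is that hyperbolic groups may have property (T), in which case $\gamma\neq 1$ already in ordinary equivariant $KK$-theory, so the Higson--Kasparov homotopy (valid for a-T-menable groups) is simply unavailable. To circumvent this I would pass to Lafforgue's Banach $KK$-theory and to \emph{unconditional completions} of the convolution algebra $C_c(G,A)$. The key point is that the homotopy realizing $\gamma=1$, obstructed in the unitary/Hilbert setting by the almost-invariant vectors coming from property (T), \emph{does} become available once one is allowed Banach-space representations of $G$ of subexponential growth; constructing this deformation from the hyperbolic geometry is the analytic heart of the argument and the step I expect to be hardest to make precise.

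It then remains to transfer the resulting isomorphism from the unconditional Banach completion back to the reduced crossed product $C^*_{red}(G,A)$. Here I would invoke property (RD): hyperbolic groups satisfy the Haagerup inequality (Jolissaint, de la Harpe--Valette), so a Sobolev-type algebra $H^s_\ell(G,A)$ sits between $C_c(G,A)$ and $C^*_{red}(G,A)$, is itself an unconditional completion, and is closed under holomorphic functional calculus. This last property forces the inclusion $H^s_\ell(G,A)\hookrightarrow C^*_{red}(G,A)$ to be an isomorphism on $K$-theory. Chaining the $\gamma=1$ isomorphism at the level of the unconditional completion with this density isomorphism yields surjectivity of $\mu$, and combined with the split injectivity above shows that $\mu$ is an isomorphism, as claimed. (Since the statement is precisely Lafforgue's theorem from \cite{Laff-hyperbolic}, in practice one would simply cite it; the above is the strategy one would reconstruct.)
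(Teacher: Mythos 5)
The paper offers no proof of this statement at all---it is quoted as Lafforgue's theorem with the argument deferred entirely to \cite{Laff-hyperbolic}---and your sketch is a faithful reconstruction of that cited proof: the Dirac--dual-Dirac method with the Kasparov--Skandalis $\gamma$-element for bolic spaces (dual-Dirac giving split injectivity), Banach $KK$-theory with unconditional completions to realize $\gamma=1$ past the property (T) obstruction, and a Haagerup-inequality/property (RD) transfer back to $C^*_{red}(G,A)$ via spectral invariance. The only imprecision worth noting is in the final transfer step: for \emph{nontrivial} coefficients $A$, the purely polynomial Jolissaint--Sobolev completion you describe is the mechanism of Lafforgue's earlier trivial-coefficient argument, whereas the coefficient case proved in \cite{Laff-hyperbolic} realizes $\gamma=1$ in representations of arbitrarily small exponential growth $e^{\varepsilon\ell(g)}$ and then uses the (RD) inequality to show the corresponding exponentially weighted completions still have the $K$-theory of $C^*_{red}(G,A)$---a refinement of, but consistent with, your outline.
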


Combining Lafforgue's theorem  (\cite{Laff-hyperbolic}) with the six-term $K$-theory exact sequence, we show that the relative Baum--Connes conjecture with coefficients in a ${\rm II}_1$ factor holds for a pair of hyperbolic groups.
\begin{prop}\label{prop-relative-BC-for-hyperbolic}
Let $G$ and $\Gamma$ be hyperbolic groups, and $h\colon G \to \Gamma$ a group homomorphism. Let $\phi\colon C^*_{red}(G) \to \mathcal{M}$ be a trace-preserving embedding of $C^*_{red}(G)$ into a ${\rm II}_1$-factor $\mathcal{M}$. Then the  relative Baum--Connes conjecture with coefficients in $\mathcal  M$ holds for $h\colon G\to \Gamma$, i.e., the reduced relative Baum--Connes assembly map
$$\mu_{red}:K_{*}^{G,\Gamma}(\underline{E}G,\underline{E}\Gamma, \mathcal M)\rightarrow K_*(C_{red}^*(G,\Gamma, \mathcal M))$$
is an isomorphism.
\end{prop}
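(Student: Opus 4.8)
The plan is to exhibit both the relative $K$-homology and the $K$-theory of the relative group $C^*$-algebra as the ``mapping cone terms'' of two six-term exact sequences, to compare these sequences through the evaluation maps, and to conclude by the five lemma using Lafforgue's theorem at the two outer positions.

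First I would unwind the two mapping cone constructions. Since $C_{red}^*(G,\Gamma,\mathcal M)=C_0(\mathbb{R})\otimes C_{h_{red,\mathcal M}}$ is built from the mapping cone of $h_{red,\mathcal M}\colon C^*_{red}(G,\mathcal M)\to C^*_{red}(\Gamma,\mathcal M\otimes\mathcal M)$, the defining short exact sequence of the cone yields a six-term exact sequence relating $K_*(C^*_{red}(G,\mathcal M))$, $K_*(C^*_{red}(\Gamma,\mathcal M\otimes\mathcal M))$ and $K_*(C_{red}^*(G,\Gamma,\mathcal M))$ (the extra $C_0(\mathbb{R})$ only shifts the degree). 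On the localization side, $C_{L}^{*}(P_s(G),P_{s}(\Gamma),\mathcal M)^{G,\Gamma}=C_0(\mathbb{R})\otimes C_{h_{L,\mathcal M}}$ produces the analogous six-term sequence for each $s$; taking the inductive limit over $s$, which preserves exactness by continuity of $K$-theory, gives a six-term sequence whose terms are $K_*^{G}(\underline{E}G,\mathcal M)$, $K_*^{\Gamma}(\underline{E}\Gamma,\mathcal M\otimes\mathcal M)$ and $K_*^{G,\Gamma}(\underline{E}G,\underline{E}\Gamma,\mathcal M)$.

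Next I would observe that the evaluation map $e$ is by construction a morphism of mapping cones: it commutes with the inclusion of the suspended ideal and with the projection onto the quotient, hence induces a morphism of the two short exact sequences and therefore a commutative ladder between the two six-term sequences. After passing to the inductive limit over the Rips complexes and using the stabilization identifications $C^*(P_s(G),\mathcal M)^G\cong C^*_{red}(G,\mathcal M)\otimes K$ (and its analogue for $\Gamma$), the two outer vertical maps in this ladder become precisely the Baum--Connes assembly maps $\mu_G\colon K_*^{G}(\underline{E}G,\mathcal M)\to K_*(C^*_{red}(G,\mathcal M))$ and $\mu_\Gamma\colon K_*^{\Gamma}(\underline{E}\Gamma,\mathcal M\otimes\mathcal M)\to K_*(C^*_{red}(\Gamma,\mathcal M\otimes\mathcal M))$ with coefficients $\mathcal M$ and $\mathcal M\otimes\mathcal M$ respectively.

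Since $G$ and $\Gamma$ are hyperbolic, Theorem~\ref{thm-BC-hyperbolic} guarantees that the Baum--Connes conjecture holds with \emph{arbitrary} coefficients, so both $\mu_G$ and $\mu_\Gamma$ are isomorphisms. Applying the five lemma to the commutative ladder then forces the middle vertical map, which is exactly the relative assembly map $\mu_{red}$, to be an isomorphism as well. The main obstacle I anticipate is not the five lemma itself but the bookkeeping needed to make the ladder genuinely commute: one must check that the evaluation map respects the mapping-cone short exact sequences together with their connecting homomorphisms, and that the outer maps, once the inductive limit and the stabilization identifications have been carried out, really coincide with the honest assembly maps carrying coefficients $\mathcal M$ and $\mathcal M\otimes\mathcal M$ -- only then does Lafforgue's theorem apply to exactly the coefficient algebras that appear here.
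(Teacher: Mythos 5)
Your proposal is correct and follows essentially the same route as the paper: the paper's proof writes down exactly the commutative ladder of six-term exact sequences coming from the two mapping cones, invokes Lafforgue's theorem (Theorem \ref{thm-BC-hyperbolic}) to see that $\mu_G$ and $\mu_\Gamma$ are isomorphisms with coefficients $\mathcal M$ and $\mathcal M\otimes\mathcal M$ respectively, and concludes by the five lemma. Your additional bookkeeping — that the evaluation map is a morphism of mapping-cone extensions and that the stabilization identifications turn the outer maps into honest assembly maps — is precisely the content the paper leaves implicit.
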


\begin{proof} We have the following commutative diagram:

\begin{equation*}
\begin{tikzcd}
   K_{*+1}^G(\underline{E}G, \mathcal M) \ar{d}\ar{r}{\mu_{ G}}[swap]{\cong}&    K_{*+1}(C^*_{red}(G, \mathcal M)) \ar{d}\\
K_{*+1}^{\Gamma}(\underline{E}\Gamma, \mathcal M \otimes \mathcal M) \ar{ d}\ar{r}{\mu_{\Gamma}}[swap]{\cong}&  K_{*+1}(C^*_{red}(\Gamma, \mathcal M\otimes \mathcal M)) \ar{d}\\
K_{*+1}^{G,\Gamma}(\underline{E}G,\underline{E}\Gamma, \mathcal M)\ar{r}{\mu_{red}}\ar{d}
& K_{*+1}(C^*_{red}(G,\Gamma, \mathcal M))\ar{d}\\
  K_{*}^G(\underline{E}G, \mathcal M) \ar{d}\ar{r}{\mu_{ G}}[swap]{\cong}&    K_{*}(C^*_{red}(G, \mathcal M)) \ar{d}\\
K_{*}^{\Gamma}(\underline{E}\Gamma, \mathcal M\otimes\mathcal  M) \ar{r}{\mu_{\Gamma}}[swap]{\cong}&  K_{*}(C^*_{red}(\Gamma, \mathcal M\otimes \mathcal M)).
\end{tikzcd}
\end{equation*}

By Theorem \ref{thm-BC-hyperbolic}, the assembly maps $\mu_{G}$ and $\mu_{\Gamma}$ are isomorphic. It follows from the five lemma that the relative assembly map
$$
\mu_{red}: K_{*}^{G,\Gamma}(\underline{E}G,\underline{E}\Gamma, \mathcal M)\to
K_{*}(C^*_{red}(G,\Gamma, \mathcal M))
$$
is an isomorphism. This finishes the proof.
\end{proof}

Using the same arguments above, we can generalize Proposition \ref{prop-relative-BC-for-hyperbolic} to the following result.
\begin{prop}\label{thm-relative-real-BC}
  Let $G$ and $\Gamma$ be any discrete groups and $h\colon G \to \Gamma$ a group homomorphism. Let $\phi: C^*_{red}(G) \hookrightarrow \mathcal M$ be a trace preserving embedding of $C^*_{red}(G)$ into a ${\rm II}_1$-factor $\mathcal{M}$. Assume that the Baum--Connes conjecture holds for $G$ and $\Gamma$. Then the relative Baum--Connes conjecture with coefficients in $\mathcal M$  holds for  $h\colon G \to \Gamma$.
\end{prop}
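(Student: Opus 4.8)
The plan is to reproduce the proof of Proposition~\ref{prop-relative-BC-for-hyperbolic} essentially word for word, observing that the only property of hyperbolic groups used there was that they satisfy the Baum--Connes conjecture with coefficients, which Lafforgue's Theorem~\ref{thm-BC-hyperbolic} supplies in the hyperbolic case. Both sides of the relative assembly map are, by construction, built from mapping cones: the relative $K$-homology $K_*^{G,\Gamma}(\underline{E}G,\underline{E}\Gamma,\mathcal{M})$ is the inductive limit over $s$ of the $K$-theory of the suspension of the mapping cone of $h_{L,\mathcal{M}}\colon C_L^*(P_s(G),\mathcal{M})^G \to C_L^*(P_s(\Gamma),\mathcal{M}\otimes\mathcal{M})^\Gamma$, while $K_*(C^*_{red}(G,\Gamma,\mathcal{M}))$ is the $K$-theory of the suspension of the mapping cone of $h_{red,\mathcal{M}}\colon C^*_{red}(G,\mathcal{M})\to C^*_{red}(\Gamma,\mathcal{M}\otimes\mathcal{M})$ from Lemma~\ref{lemmakey}. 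Here the hypothesis ``the Baum--Connes conjecture holds for $G$ and $\Gamma$'' is understood with coefficients, and in particular with coefficients in the ${\rm II}_1$-factors $\mathcal{M}$ and $\mathcal{M}\otimes\mathcal{M}$.

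First I would write down the two six-term exact sequences attached to these mapping cones and assemble them into a single commutative ladder, exactly as in the diagram in the proof of Proposition~\ref{prop-relative-BC-for-hyperbolic}. The left-hand column is the exact sequence
$$\cdots \to K_{*+1}^G(\underline{E}G,\mathcal{M}) \to K_{*+1}^\Gamma(\underline{E}\Gamma,\mathcal{M}\otimes\mathcal{M}) \to K_{*+1}^{G,\Gamma}(\underline{E}G,\underline{E}\Gamma,\mathcal{M}) \to K_*^G(\underline{E}G,\mathcal{M}) \to \cdots,$$
the right-hand column is the analogous sequence for the reduced relative group $C^*$-algebra, and the horizontal maps are the assembly maps $\mu_G$ (with coefficients $\mathcal{M}$), $\mu_\Gamma$ (with coefficients $\mathcal{M}\otimes\mathcal{M}$), and the relative assembly map $\mu_{red}$.

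The one step that requires attention is the commutativity of this ladder, that is, the naturality of $\mu_G$ and $\mu_\Gamma$ with respect to $h_{red,\mathcal{M}}$ and its localization counterpart $h_{L,\mathcal{M}}$. This is precisely what is verified in setting up the diagram of Proposition~\ref{prop-relative-BC-for-hyperbolic}, and none of it uses hyperbolicity: it rests only on the functoriality of the evaluation map $e$ together with the compatibility of the assembly maps with the factorization of $h_{red,\mathcal{M}}$ through $h^1_{red}$, $\phi\otimes\mathrm{id}$, and the isomorphisms of Lemma~\ref{lemmakey}. I expect this bookkeeping of commutativity to be the main (and only real) obstacle, and it is inherited wholesale from the hyperbolic case.

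With commutativity established, I would invoke the hypothesis to conclude that the maps $\mu_G\colon K_*^G(\underline{E}G,\mathcal{M})\to K_*(C^*_{red}(G,\mathcal{M}))$ and $\mu_\Gamma\colon K_*^\Gamma(\underline{E}\Gamma,\mathcal{M}\otimes\mathcal{M})\to K_*(C^*_{red}(\Gamma,\mathcal{M}\otimes\mathcal{M}))$ are isomorphisms. Since in the five-row window displayed the four maps surrounding $\mu_{red}$, namely two copies each of $\mu_G$ and $\mu_\Gamma$, are isomorphisms, the five lemma forces the middle map $\mu_{red}$ to be an isomorphism as well. The conceptual content is thus simply the observation that Lafforgue's theorem entered Proposition~\ref{prop-relative-BC-for-hyperbolic} only to furnish the two outer isomorphisms $\mu_G$ and $\mu_\Gamma$, so that replacing hyperbolicity by the Baum--Connes conjecture with coefficients yields the stated generalization at once.
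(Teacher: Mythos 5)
Your proposal is correct and is exactly the paper's own argument: the paper disposes of Proposition \ref{thm-relative-real-BC} with the single remark ``using the same arguments above,'' i.e.\ the commutative ladder and five-lemma argument of Proposition \ref{prop-relative-BC-for-hyperbolic}, in which hyperbolicity served only to make the outer assembly maps $\mu_G$ and $\mu_\Gamma$ isomorphisms. Your reading of the hypothesis as Baum--Connes \emph{with coefficients} (in particular in $\mathcal{M}$ and $\mathcal{M}\otimes\mathcal{M}$) matches what the paper's proof actually requires.
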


\section{A relative Bott Periodicity}
In this section, we shall prove a Bott periodicity for the relative Roe algebras associated with a pair of groups $(G, \Gamma)$ with $\Gamma$ coarsely embeddable into Hilbert space.

\subsection{$C^*$-algebras associated with Hilbert spaces}\label{section-C-alg-for-Hilbert-space}

Let $E$ be a separable, infinite-dimensional Euclidean space. For any finite-dimensional, affine subspace  $E_a$, denote by $E_a^0$ the finite-dimensional linear subspace of $E$
consisting of differences of elements in $E_a$. Let $\mathcal{C}(E_a)$ be the $\mathbb{Z}_2$-graded
$C^*$-algebra of continuous functions from $E_a$ to the complexified Clifford algebra of $E^0_a$ vanishing at infinity. A $\mathbb{Z}_2$-grading on $\mathcal{C}(E_a)$ is induced from the even and odd parts of $\text{Cliff}(E_a^0)$.

Let $\mathcal{S}$ be the $\mathbb{Z}_2$-graded $C^*$-algebra of all continuous functions on $\mathbb{R}$ vanishing at infinity, where $\mathcal{S}$ is graded according to odd and even functions. Let $\mathcal{A}(E_a)$ be the graded tensor product $\mathcal{S} \widehat{\otimes}\mathcal{C}(E_a)$.

For a pair of finite-dimensional, affine subspaces $E_a$ and $E_b$ with $E_a \subset E_b$, there exists a decomposition
$$E_b=E^0_{ba} \oplus E_a,$$
where $E^0_{ba}$ is the orthogonal complement of $E^0_a$ in $E^0_b$. For each element $v_b \in E_b$, there exists a unique decomposition $v_b=v_{ba}+v_a$, for some $v_{ba} \in E^0_{ba}$ and $v_a \in E_a$.

For each function $h \in \mathcal{C}(E_a)$, we can extend it to a function on $E_b$ via $\tilde{h}(v_b)=h(v_a)$, for all $v_b=v_{ba}+v_a$. The decomposition $E_b=E^0_{ba} \oplus E_a$ gives rise to a Clifford algebra valued function, denoted by $C_{ba}: E_b \to\mbox{ Cliff}(E^0_b)$ on $E_b$ which maps $v_b \in E_b$ to $v_{ba} \in E^0_{ba} \subset \mbox{Cliff}(E_b^0)$.

Denote by $X: \mathcal{S} \to \mathcal{S}$ the operator of multiplication by $ x$ on $\mathbb{R}$.  Note that $X$ is a degree one, essentially selfadjoint, unbounded multiplier of $\mathcal{S}$ with domain the compactly supported functions in $\mathcal{S}$.

 \begin{defn}[\cite{KaHiTr}]\leavevmode
\begin{enumerate}
\item Let $E_a \subset E_b$ be a pair of finite-dimensional, affine subspaces of $E$. One can define a homomorphism
$$
\beta_{ba}: \mathcal{A}(E_a) \to \mathcal{A}(E_b)
$$
by $$\beta_{ba}(f \widehat{\otimes}h)=f(X\widehat{\otimes }1+ 1\widehat{\otimes}C_{ba}) (1 \widehat{\otimes} \tilde{h})$$
for all $f \in \mathcal{S}$, $h \in \mathcal{C}(E_a)$.
\item We define a $C^*$-algebra
$$\mathcal{A}(E):=\varinjlim \mathcal{A}(E_a),$$
where the direct limit is  over all finite-dimensional affine subspaces.
 \end{enumerate}
\end{defn}

%If $E_a \subset E_b \subset E_c$, then we have $\beta_{cb}\circ \beta_{ba}=\beta_{ca} $, therefore the direct limit is well-defined.

\begin{comment}
Now let us define the Bott map by examples. For any $f \in C_0(\mathbb{R})$, we can define an element
$$f(s,v) \in \mathcal{S}\widehat{\otimes} C_0(V, \mbox{Cliff}(H))$$
by
$$f(s,v)=f(s \widehat{\otimes}1 + 1 \widehat{\otimes} v),$$
where $s$ and $v$ can be viewed as the unbounded multiplication operator on $\mathcal{S}$ and $C_0(H, \text{Cliff}(H))$, respectively.

More concretely, for any $f \in \mathcal{S}$, $f(s,v)$ can be defined as follows,
\begin{enumerate}
\item if $f(t)=g(t^2)$, for some $g \in \mathcal{S}$, $f(s,v)=g(s^2 +\|v\|^2)$;
\item if $f(t)=tg(t^2)$, for some $g \in \mathcal{S}$, $f(s,v)=g(s^2 +\|v\|^2)(s \widehat{\otimes}1 + 1 \widehat{\otimes} v)$.
\end{enumerate}
\end{comment}
Given any discrete group $\Gamma$, $\mathcal{S}$ is equipped with trivial $\Gamma$-action. If $\Gamma$ acts on the Euclidean space $E$ by linear isometries, then the $\Gamma$-action on $E$ induces a $\Gamma$-action on the $C^*$-algebra $\mathcal{A}(E)$.
Note that $\mathcal{A}(\{0\}) = \mathcal{S}$. For each $f \in \mathcal{S}$, let $\beta_t(f)=f_t(X\widehat{\otimes}1 + 1 \widehat{\otimes}C)$ for every $t \in [1, \infty)$, where $f_t(x)=f(x/t)$.

We define the Bott map
$$\beta_*:K_*(C^*_{max}(\Gamma, \mathcal{S})) \to K_*(C^*_{max}(\Gamma, \mathcal{A}(E)))
$$
to be the homomorphism induced by the asymptotic morphism
$$
\beta_t:C^*_{max}(\Gamma, \mathcal{S}) \to C^*_{max}(\Gamma,\mathcal{A}(E))$$
given by $f \mapsto \beta_t(f) $ for each $t \in [1, \infty)$.
The following result is due to Higson--Kasparov--Trout \cite{KaHiTr}.

\begin{thm}[Infinite-dimensional Bott Periodicity \cite{KaHiTr}]
Let $\Gamma$ be a countable discrete group, $E$ an infinite-dimensional Euclidean space with a $\Gamma$-action by linear isometries. Then the Bott map
$$\beta_*:K_*(C^*_{max}(\Gamma, \mathcal{S})) \to K_*(C^*_{max}(\Gamma, \mathcal{A}(E)))$$
is an isomorphism.
\end{thm}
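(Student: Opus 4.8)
The plan is to follow Higson--Kasparov--Trout and construct an explicit two-sided homotopy inverse to the Bott map on $K$-theory, namely a \emph{Dirac asymptotic morphism}, organizing the argument so that the key computation reduces to a finite-dimensional harmonic-oscillator calculation while the equivariance is preserved at every stage. One should note at the outset that the tempting shortcut --- writing $\mathcal{A}(E)=\varinjlim_{E_a}\mathcal{A}(E_a)$, invoking finite-dimensional Bott periodicity on each piece, and using continuity of $K$-theory --- does \emph{not} suffice in the equivariant setting: since $\Gamma$ acts by (possibly aperiodic) linear isometries, $\Gamma$-invariant finite-dimensional subspaces need not be cofinal, so the individual $\mathcal{A}(E_a)$ are not $\Gamma$-subalgebras and one cannot form $C^*_{max}(\Gamma,\mathcal{A}(E_a))$. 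This is precisely why a genuine infinite-dimensional (Dirac) argument is required.

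First I would build the Dirac side. For each finite-dimensional linear subspace $V\subset E$ with Dirac operator $D_V$, define the finite-dimensional Dirac morphism
$$\alpha_V\colon \mathcal{A}(V)\to \mathcal{S}\,\widehat{\otimes}\,\mathcal{K}(H_V),\qquad H_V=L^2(V,\mathrm{Cliff}(V)),$$
through the unbounded functional calculus of $X\,\widehat{\otimes}\,1+1\,\widehat{\otimes}\,D_V$ together with Clifford/position multiplication. The maps $\alpha_V$ are compatible for $V\subset W$ once one tensors with the Gaussian ground state of the complementary harmonic oscillator, and assembling them gives an asymptotic morphism $\alpha\colon \mathcal{A}(E)\to \mathcal{S}\,\widehat{\otimes}\,\mathcal{K}(H_E)$ where $H_E$ is the corresponding inductive limit of spinor representations. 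This $\alpha$ exists only as a limiting/asymptotic object, since there is no honest Dirac operator on all of $E$.

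Next I would compute the two composites. For $\alpha\circ\beta$ the analytic core is the Bott--Dirac operator $B_V=D_V+C_V$, whose square $B_V^2=D_V^2+C_V^2+[D_V,C_V]$ is a harmonic oscillator with one-dimensional kernel spanned by the rotation-invariant Gaussian ground state $\xi_V$; Mehler's formula provides an explicit homotopy contracting the functional calculus of $X\,\widehat{\otimes}\,1+1\,\widehat{\otimes}\,B_V$ onto the rank-one projection $\xi_V\xi_V^{*}$. Passing to the limit over $V$ exhibits $\alpha\circ\beta$ as homotopic to the rank-one inclusion $\mathcal{S}\to\mathcal{S}\,\widehat{\otimes}\,\mathcal{K}$ along $\xi=\varinjlim\xi_V$, which induces the identity on $K$-theory, while a symmetric rotation homotopy interchanging the two copies of the Euclidean variable shows $\beta\circ\alpha\simeq\mathrm{id}$. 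For the equivariance, because $\Gamma$ acts by linear isometries it commutes with $D_V$, $C_V$ and fixes the Gaussian ground state, so $\alpha$ and every intervening homotopy are $\Gamma$-equivariant; since equivariant asymptotic morphisms and equivariant homotopies are functorial for the maximal crossed product, $\alpha$ descends to a map $K_*(C^*_{max}(\Gamma,\mathcal{A}(E)))\to K_*(C^*_{max}(\Gamma,\mathcal{S}))$ that is a two-sided inverse of $\beta_*$.

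The main obstacle is the construction and control of $\alpha$ in infinite dimensions and the verification of $\alpha\circ\beta\simeq\mathrm{id}$: one must check that the limit over finite-dimensional subspaces of the Mehler deformations is a genuine asymptotic morphism, with the functional-calculus estimates uniform in the limit, and that all of this takes place \emph{equivariantly} so that it survives passage to $C^*_{max}(\Gamma,-)$. The equivariance is what forces the maximal (rather than reduced) crossed product to be the correct target and, because $\Gamma$ acts by isometries, it is morally automatic; the real work is the uniform analytic control of the infinite-dimensional harmonic-oscillator deformation. Once the two composition identities are established, the conclusion that $\beta_*$ is an isomorphism is immediate.
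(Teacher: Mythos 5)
The paper offers no proof of this statement---it is quoted directly as a theorem of Higson--Kasparov--Trout \cite{KaHiTr}---and your outline faithfully reconstructs exactly the cited argument: the Dirac asymptotic morphism dual to $\beta$ assembled from finite-dimensional pieces via Gaussian ground-state inclusions, the Mehler-formula contraction of $\alpha\circ\beta$ onto the vacuum rank-one projection, the rotation trick for $\beta\circ\alpha$, and descent to maximal crossed products via equivariant asymptotic morphisms (correctly noting why the naive limit over $\mathcal{A}(E_a)$ fails equivariantly). Your proposal is therefore correct and takes the same route as the paper's source, with the one caveat that the individual $D_V$, $C_V$, $\xi_V$ are permuted rather than fixed by $\Gamma$ when $gV\neq V$, so equivariance holds only (asymptotically) for the assembled limit objects, exactly as in \cite{KaHiTr}.
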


\subsection{$\Gamma$-$C^*$-algebras associated with coarse embeddings into Hilbert space}\label{sectn-proper-algebra-ce}
In the rest of this section, we shall define a proper $\Gamma$-$C^*$-algebra associated to a coarse embedding of $\Gamma$ into Hilbert space. Let us recall that a $\Gamma$-$C^*$-algebra $A$ is said to be proper if there exists a locally compact $\Gamma$-space $Y$ with a proper $\Gamma$-action such that $C_0(Y)$ is contained in the center of the multiplier algebra of $A$ and $C_0(Y)A$ is dense in $A$ under the norm topology.

In order to define the proper $\Gamma$-$C^*$-algebra, we will generalize the construction of Higson--Kasparov--Trout \cite{KaHiTr} to the case of continuous fields. The following construction is essentially due to Kasparov--Yu \cite{Kas_Yu},  Skandalis--Tu--Yu \cite{Skan_Tu_Yu}, and Tu \cite{Tu_BC}.

Suppose $\varphi: \Gamma \to H$ is a coarse embedding into Hilbert space. For each $\gamma \in \Gamma$, we define a bounded function $f_{\gamma}: \Gamma \to \mathbb{C}$ by
$$
f_{\gamma}(y)=\|\varphi(y)-\varphi(y\gamma)\|
$$
for all $y \in \Gamma$. The function $f_{\Gamma}$ is bounded since $\varphi$ is a coarse embedding.

Let $\ell^{\infty}(\gamma)$ be the $C^*$-algebra of all bounded complex-valued functions on $\Gamma$ and $c_0(\Gamma)\subset \ell^{\infty}(\Gamma)$ the $C^*$-subalgebra consisting of all functions vanishing at infinity. We define a $\Gamma$-action on $\ell^{\infty}(\Gamma)$ by $(\gamma\cdot f)(y)=f(y\gamma)$ for all $f \in \ell^{\infty}(\Gamma)$ and $x, \gamma \in \Gamma$.

Let $X'$ be the spectrum of the commutative $\Gamma$-invariant $C^*$-subalgebra of $\ell^{\infty}(\Gamma)$ generated by all constant functions, $c_0(\Gamma)$ functions and all functions $f_{\gamma}$ as defined above together with their translations by group elements of $\Gamma$. Then $X'$ admits a right action of $\Gamma$ induced by the $\Gamma$-action on $C(X')$ where $C(X')$ can be viewed as a $\Gamma$-invariant $C^*$-subalgebra of $\ell^{\infty}(\Gamma)$.

Note that $\Gamma$ is a dense subset of $X'$. For each $\gamma\in \Gamma$, the function $f_{\gamma}: \Gamma \to \mathbb{R}$ extends to a continuous function $\varphi'(\cdot, \gamma): X'\to \mathbb{C}$ by the definition of $X'$. One can define a continuous function $\varphi': X' \times \Gamma \to \mathbb{C}$ by continuously extending the function
$$\varphi'(y, \gamma)=f_{\gamma}(y)$$
for all $x, \gamma \in \Gamma$, where the space $X'\times \Gamma$ is equipped with product topology.

The continuous function $\varphi'$ on $X'\times \Gamma$ is a proper, continuous, conditionally negative definite function in the sense that it satisfies
\begin{enumerate}
    \item[(1)] $\varphi'(x,e)=0$ for all $x \in X$, where $e \in \Gamma$ is the identity element;
    \item[(2)] $\varphi'(xg,g^{-1})=\varphi(x,g)$ for all $x \in X$ and $g \in \Gamma$;
    \item[(3)] $\sum_{i=0}^n t_it_j\varphi'(xg_i,g_i^{-1}g_j)\leq 0$ for all $\left\{t_i\right\}_{i=1}^n\subset \mathbb{R}$ with $\sum_{i=1}^n=0$, $g_i\in \Gamma$ and $x \in X$;
    \item[(4)] $\varphi': X'\times \Gamma \to \mathbb{C}$ is proper in the sense that every preimage of a compact subset of $\mathbb{C}$ is compact.
\end{enumerate}
We say that the $\Gamma$-action on $X'$ is a-T-menable if there exists a proper, continuous, conditionally negative definite function on $X' \times \Gamma$.

Let $X$ be the space of probability measures on $X'$. It is a convex and compact topological space endowed with the weak-$*$ topology. The space $X$ admits a $\Gamma$-action induced by the $\Gamma$-action on $X'$. We define a continuous function on $X \times \Gamma$ by
$$
\varphi(m, \gamma)=\int_{X'} \varphi'(y,\gamma) dm(y)
$$
for all $m \in X$.

For each pair $(m,g) \in X\times \Gamma$, we have that
\begin{align*}
    \varphi(mg,g^{-1}) &=\int_{X'} \varphi'(y,\gamma^{-1}) d(mg)\\
                       &=\int_{X'} \varphi'(yg,\gamma^{-1}) dm\\
                       &=\int_{X'} \varphi'(y,\gamma) dm(y)\\
                       &=\varphi(m,g).
\end{align*}
Note that $\varphi(x,e)=0$ for all $x \in X$. By the definition of the function $\varphi$ and the properties of $\varphi'$, we have that the continuous function $\varphi$ is a proper, and conditionally negative definite function. Note that the $\Gamma$-space $X$ satisfies the following
\begin{enumerate}
    \item[(1)] for each finite subgroup $F\subseteq G$, $X$ is $F$-contractible;
    \item[(2)] the $\Gamma$-action is a-T-menable.
\end{enumerate}

Now, we are ready to construct a continuous field of Hilbert spaces using the action of $\Gamma$ on the space $X$. Let us first recall the definition of continuous fields of Hilbert spaces over a compact space. Let $\big(\mathcal{H}_x\big)_{x \in X}$ be a family of Banach spaces. Denote $\mathcal{H}=\bigsqcup_{x \in X}\mathcal{H}_{x}$. A section of the bundle $\mathcal{H}$ is a function $s: X \to \mathcal{H}$ satisfying $s(x) \in \mathcal{H}_{x}$ for all $x \in X$.

\begin{defn}
Let $X$ be
a compact space. A continuous field of Banach spaces over $X$ is a family of Banach spaces $\big(\mathcal{H}_x\big)_{x \in X}$ with a set of sections $\Theta(X, \mathcal{H})$, such that
\begin{enumerate}
  \item[(1)] the set $\Theta(X, \mathcal{H})$ is a linear subspace of the direct product $\prod_{x \in X} \mathcal{H}_x$:
  \item[(2)] for every $x \in X$, the set $\{s(x):s \in \Theta(X, \mathcal{H})\}$ is dense in $\mathcal{H}_x$;
  \item[(3)] for every $s \in \Theta(X, \mathcal{H})$, the function $x \mapsto \|s(x)\|$ is a continuous function on $X$;
  \item[(4)] let $s: X \to \mathcal{H}$ be a section, i.e. $s(x) \in \mathcal{H}_{x}$, for all $x \in X$. If for every $x \in X$, and every $\epsilon>0$, there exists a section $s' \in \Theta(X, \mathcal{H})$ such that $\|s(y)-s'(y)\|< \epsilon$ for all $y$ in some neighborhood of $x$, then $s \in \Theta(X, \mathcal{H})$.
\end{enumerate}
\end{defn}

If every fiber $\mathcal{H}_x$ is a Hilbert space, we say $\big(\mathcal{H}_x\big)_{x \in X}$ is a continuous field of Hilbert spaces over $X$. If every fiber is a $C^*$-algebra and the collection of sections is closed under the $*$-operation and the multiplication, the continuous field is called a continuous field of $C^*$-algebras.

Let $\varphi: X \times \Gamma \to \mathbb{R}$ be a continuous, proper conditionally negative definite
function. We can define a continuous field of Hilbert spaces as follows.

Consider a linear subspace
$$C^0_{c}(\Gamma):=\left\{f \in C_c(\Gamma): \sum_{g \in \Gamma}f(g)=0\right\}\subset C_c(\Gamma).$$
 For each $x \in X$, we define a sesqui-linear form
$$
\left\langle\xi,\eta\right\rangle_x=-\frac{1}{2}\sum_{g, g' \in \Gamma}\xi(g) \overline{\eta(g')}\varphi(xg^{-1}, gg'),
$$
for all $\xi, \eta \in C^0_{c}(\Gamma)$. Since $\varphi$ is of conditionally negative definite type, the form above is positive semidefinite and so one can quotient out by the zero subspace and complete to get a Hilbert space $\mathcal{H}_x$. Following the arguments in \cite{Deng-Novikov-extension}, we have a continuous field of Hilbert spaces $\left(\mathcal{H}_x\right)_{x\in X}$.

Since each fiber of the continuous field is a Hilbert space, we can define a $C^*$-algebra $\mathcal{A}(\mathcal{H}_x)$ associated with each fiber $\mathcal{H}_x$ following the construction in Section \ref{section-C-alg-for-Hilbert-space}. Furthermore, by the first author's construction in \cite{Deng-Novikov-extension}, one obtains a $C^*$-algebra with proper $\Gamma$-action.
\begin{thm}[\cite{Deng-Novikov-extension}]
Let $\big(\mathcal{A}(\mathcal{H}_x)\big)_{x \in X}$ be the collection of $C^*$-algebras defined above.
\begin{enumerate}
    \item[(1)] There exists a structure of a continuous field of $C^*$-algebras for the bundle $\big(\mathcal{A}(\mathcal{H}_x)\big)_{x \in X}$.
     \item[(2)] Let $\mathcal{A}(X)$ be the $C^*$-algebra generated by all the continuous sections over the continuous field. Then there exists a proper $\Gamma$-action on the $\mathcal{A}(X)$.
\end{enumerate}
\end{thm}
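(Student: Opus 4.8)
The plan is to promote the continuous field of Hilbert spaces $(\mathcal{H}_x)_{x\in X}$ to a continuous field of $C^*$-algebras fiber by fiber, using the defining inductive limit of each $\mathcal{A}(\mathcal{H}_x)$, and then to extract the proper $\Gamma$-action from the fiberwise affine isometric action that the conditionally negative definite function $\varphi$ encodes. The two inputs I would lean on are the continuous field structure on $(\mathcal{H}_x)_x$ already established above and the infinite-dimensional Bott/Clifford formalism of Higson--Kasparov--Trout recalled in Section \ref{section-C-alg-for-Hilbert-space}.

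For part (1), I would build the field one finite-dimensional layer at a time, using $\mathcal{A}(\mathcal{H}_x)=\varinjlim_V \mathcal{S}\,\widehat{\otimes}\,\mathcal{C}(V)$ over finite-dimensional affine subspaces $V\subseteq \mathcal{H}_x$. Choosing finitely many pointwise affinely independent continuous sections $s_0,\dots,s_n\in\Theta(X,\mathcal{H})$, their affine spans $V_x=\mathrm{aff}(s_0(x),\dots,s_n(x))$ form a continuously varying field of finite-dimensional affine subspaces, and continuity of the fiberwise inner product then makes the Clifford fields $\mathcal{C}(V_x)=C_0(V_x,\mathrm{Cliff}(V_x^0))$ into a continuous field of $C^*$-algebras. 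Tensoring with the fixed algebra $\mathcal{S}$ and declaring the continuous sections to be those generated by Bott-type elements $f(X\widehat{\otimes}1+1\widehat{\otimes}C)(1\widehat{\otimes}\tilde{h}_x)$, with $f\in\mathcal{S}$, $C$ the Clifford multiplication by position in the fiber, and $h$ a continuous section of the Clifford field, yields a continuous subfield. The full field $(\mathcal{A}(\mathcal{H}_x))_x$ is then the inductive limit over nested systems of such generating sections, the structure maps $\beta_{ba}$ supplying compatibility as the affine span is enlarged. Axioms (1), (2) and (4) of a continuous field become formal once this is in place, density in axiom (2) coming from density of the $V_x$ in $\mathcal{H}_x$ together with the fact that the Bott-type elements generate $\mathcal{A}(\mathcal{H}_x)$.

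The step that demands real work is axiom (3), continuity of $x\mapsto\|s(x)\|$. Each generator is a functional-calculus expression in the degree-one unbounded multiplier $X\widehat{\otimes}1+1\widehat{\otimes}C$, whose resolvents depend norm-continuously on the inner-product data of $\mathcal{H}_x$; since that data varies continuously by the continuous-field hypothesis on $(\mathcal{H}_x)_x$, the norms of the generators, and hence of arbitrary finite sections, vary continuously. I expect the main obstacle for part (1) to be upgrading this from finite sections to general sections: one must show that the finite-dimensional truncations converge uniformly in $x$ over all of $X$, so that the limiting sections retain continuous norm. Compactness of $X$ is precisely what makes this uniformity available, and this is the point at which continuity of the Hilbert field, rather than mere measurability, is indispensable.

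For part (2), the $\Gamma$-action on $X$ together with the $\Gamma$-equivariance of the Hilbert field --- the compatible fiberwise affine isometries $g\colon\mathcal{H}_x\to\mathcal{H}_{xg^{-1}}$ determined through the formula for $\langle\,\cdot\,,\,\cdot\,\rangle_x$ --- induces a $\Gamma$-action on the section algebra $\mathcal{A}(X)$ covering the action on the base. To see it is proper I would realize $\mathcal{A}(X)$ through its central Clifford factor, so that $C_0$ of the total space $Y$ of the underlying field of affine Euclidean spaces embeds into the center of the multiplier algebra with $C_0(Y)\mathcal{A}(X)$ dense, making $\mathcal{A}(X)$ a $C_0(Y)$-algebra. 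The $\Gamma$-action on $Y$ is forced to be proper because the fiberwise affine cocycle escapes to infinity, i.e. the translation lengths $\|g\cdot v-v\|$ grow without bound as $g\to\infty$, which is exactly the content of the properness of $\varphi$ (property (4)), $\varphi(x,g)$ playing the role of the squared cocycle length. The genuine difficulty --- and the main obstacle for part (2) --- is that each fiber $\mathcal{H}_x$ is infinite-dimensional, so $Y$ fails to be locally compact on the nose; I would resolve this by presenting $\mathcal{A}(X)$ as the inductive limit of the finite-dimensional subfields of part (1), transporting the proper structure through the limit and using property (4) of $\varphi$ to guarantee that the action escapes every bounded region. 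This endows $\mathcal{A}(X)$ with the structure of a proper $\Gamma$-$C^*$-algebra and completes the proof.
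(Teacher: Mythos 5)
The theorem you are asked about is not proved in this paper at all: it is imported wholesale from the first author's earlier work \cite{Deng-Novikov-extension}, so your sketch has to be measured against that construction (which generalizes the Higson--Kasparov--Trout formalism of Section \ref{section-C-alg-for-Hilbert-space} to continuous fields, in the spirit of Kasparov--Yu, Skandalis--Tu--Yu and Tu). For part (1) your route is essentially the correct one and agrees with the cited construction: finite-dimensional affine subfields spanned by pointwise affinely independent continuous sections, the associated continuous fields of graded Clifford algebras, tensoring with $\mathcal{S}$, compatibility through the structure maps $\beta_{ba}$, and an inductive limit; you also correctly isolate where the work lies, namely continuity of $x\mapsto\|s(x)\|$ and the uniform (over the compact base $X$) convergence of finite-dimensional truncations. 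As a sketch of part (1) this is acceptable.

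Part (2), however, contains a genuine gap precisely at the step you label the main difficulty. First, a small but symptomatic error: the quantity $\|g\cdot v-v\|$ is not defined, since $g$ maps $\mathcal{H}_x$ to $\mathcal{H}_{xg^{-1}}$, a different fiber; properness cannot be phrased through fiberwise translation lengths alone. Second, and fatally for your repair strategy, the finite-dimensional subfields of part (1) are \emph{not} $\Gamma$-invariant: the fiberwise affine isometries move the base point in $X$ and translate by a cocycle whose length is governed by $\varphi(x,g)$, so no finite-dimensional subalgebra carries a $\Gamma$-action at all. There is therefore no ``proper structure'' on the pieces to transport, and properness is in any case not a property one can push through an inductive limit of non-equivariant subalgebras. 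The mechanism that actually works, in \cite{KaHiTr} for a single Hilbert space and in \cite{Deng-Novikov-extension} for the field, bypasses the non-locally-compact total space entirely: one exhibits an explicit commutative subalgebra of the center of the multiplier algebra of $\mathcal{A}(X)$, generated by $C(X)$ together with even elements of the form $g\bigl(s^{2}+\|v-\xi(x)\|^{2}\bigr)$, $g\in C_{0}([0,\infty))$, defined by functional calculus in the degree-one unbounded multipliers used to define $\beta_{ba}$, with $\xi$ ranging over continuous sections of the Hilbert field. The spectrum $W$ of this subalgebra is a locally compact $\Gamma$-space fibering over $X$, one checks that $C_{0}(W)\mathcal{A}(X)$ is dense in $\mathcal{A}(X)$, and the $\Gamma$-action on $W$ is proper because the displacement of any point of $W$ under $g\in\Gamma$ is bounded below in terms of $\varphi(x,g)$, which is proper in $g$ uniformly over the compact base $X$. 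Without constructing $W$ and verifying these density and properness statements, part (2) is asserted rather than proved.
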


We also define a $G$-action on $\mathcal{A}(X)$ by
$$g\cdot a=h(g)\cdot a$$
for all $g \in G$ and $a \in \mathcal{A}(X)$. Then we obtain a $G$-$\Gamma$-$C^*$-algebra $\mathcal{A}(X)$. We can view $\mathcal{S}$ as a $G$-$\Gamma$-algebra with trivial $G$-action and $\Gamma$-action.

Next, we shall discuss about the $K$-theory of $\mathcal{A}(X)$. Indeed, the computation of its $K$-theory plays a crucial role in the proof of the relative Novikov conjecture.

For each $x \in X$, we have the asymptotic morphism
$$\beta_{x,t}:\mathcal{S}\to \mathcal{A}(\mathcal{H}_x),$$
for $t \in [1,\infty)$. Accordingly, we have an asymptotic morphism
$$\beta_t:\mathcal{S}\to \mathcal{A}(X)$$
defined by
$$
\beta_t(f)(x)=\beta_{x,t}(f)
$$
for all $f \in \mathcal{S}$ and $t \in [1,\infty)$. Following the arguments in \cite{KaHiTr}, we can define asymptotic morphisms
$$\beta_t: C^*_{red}(\Gamma, \mathcal{S})\to C^*_{red}(\Gamma,\mathcal{A}(X))$$
and
$$\beta_t: C^*_{max}(\Gamma, \mathcal{S})\to C^*_{max}(\Gamma,\mathcal{A}(X)),$$
for all $t \in [1, \infty)$.

In order to define the asymptotic morphisms between localization algebras, we shall define the asymptotic morphisms between Roe algebras. For each element $T=\left(T_{x,y}\right)_{x,y\in Z_s}\in \mathbb{C}[P_s(G)]^G\widehat{\otimes} \mathcal{S}$, we define a $Z_s$-by-$Z_s$-matrix
$$
\left(\beta_t(T)\right)_{x,y}=T_{x,y}\widehat{\otimes} \beta_{x,t}(f)
$$
for each $t \in [1,\infty)$ and all $x, y \in Z_s$. It is obvious that $\beta_t(T)$ is an element in $\mathbb{C}[P_s(G), \mathcal{A}(X)]^G$. As a result, we can define asymptotic morphisms
$$
\beta_t: \mathbb{C}[ P_s(G)]^G\widehat{\otimes} \mathcal{S}\to \mathbb{C}[P_s(G), \mathcal{A}(X)]^G.
$$
and
$$
\beta_{t}: \mathbb{C}[P_s(\Gamma)]^{\Gamma}\widehat{\otimes} \mathcal{S}\to \mathbb{C}[P_s(\Gamma), \mathcal{A}(X)]^{\Gamma},
$$
for all $t \in [1, \infty)$.
Similarly, we define asymptotic morphisms
$$
\beta_{L,t}: \mathbb{C}_{max,L}[P_s(G)]^G\widehat{\otimes} \mathcal{S}\to \mathbb{C}_{max,L}[P_s(G), \mathcal{A}(X)]^G.
$$
and
$$
\beta_{L,t}: \mathbb{C}_{max,L}[P_s(\Gamma)]^{\Gamma}\widehat{\otimes}\mathcal{S}\to \mathbb{C}_{max,L}[P_s(\Gamma), \mathcal{A}(X)]^{\Gamma},
$$
for $t \in [1, \infty)$.
Moreover, the above asymptotic morphism between algebraic Roe algebras and localization algebras induce the following asymptotic morphisms:
\begin{enumerate}
     \item[(1)] $\beta_t: C_{max}^*(P_s(G))^G\widehat{\otimes}\mathcal{S} \to C_{max}^*(P_s(G), \mathcal{A}(X))^G$;
      \item[(2)] $\beta_t: C_{max}^*(P_s(\Gamma))^{\Gamma}\widehat{\otimes}\mathcal{S} \to C_{max}^*(P_s(\Gamma), \mathcal{A}(X))^{\Gamma}$;
       \item[(3)] $\beta_{L,t}: C_{max,L}^*(P_s(G))^G \widehat{\otimes}\mathcal{S} \to C_{max,L}^*(P_s(G), \mathcal{A}(X))^G$;
      \item[(4)] $\beta_{L,t}: C^*_{max, L}(P_s(\Gamma))^{\Gamma}\widehat{\otimes}\mathcal{S}\to C^*_{max,L}(P_s(\Gamma), \mathcal{A}(X))^{\Gamma}$,
\end{enumerate}
for all $t \in [1, \infty)$.
%========================================================================================================
\begin{comment}
    \item[(1)] $\beta: C^*(G, P_s(G), A) \widehat{\otimes}\mathcal{S} \to C^*(G, P_s(G), \mathcal{A}(X))$;
    \item[(2)] $\beta: C^*(\Gamma, P_s(\Gamma), A)\widehat{\otimes}\mathcal{S}\to C^*(\Gamma, P_s(\Gamma), \mathcal{A}(X))$;
\end{comment}
%=======================================================================================================

%Following the constructions in \cite{Deng-Novikov-extension}, w
\begin{comment}
We have the following Bott periodicity.
\begin{thm}[\cite{Deng-Novikov-extension}] \label{thm-Bott-for-finite-subgroup}
For each finite subgroup $F \subseteq \Gamma$, the homomorphism
    $$\beta_*: K_*(\mathcal{S}\rtimes F)\to K_*(\mathcal{A}(X)\rtimes F)$$
    induced by the asymptotic morphism $\beta_t: \mathcal{S}\rtimes F \to \mathcal{A}(X)\rtimes F$ on $K$-theory is an isomorphism.
\end{thm}

 As consequence, w
 \end{comment}

 Since the group actions of $G$ and $\Gamma$ on $\mathcal{S}$ are trivial, we have that
 $$C_{max}^*(P_s(G))^G\widehat{\otimes}\mathcal{S} \cong C^*_{max}(G, \mathcal{S}),$$
 $$C_{max}^*(P_s(\Gamma))^{\Gamma}\widehat{\otimes}\mathcal{S} \cong C^*_{max}(\Gamma, \mathcal{S}),$$
  $$C_{max, L}^*(P_s(G))^G\widehat{\otimes}\mathcal{S} \cong C^*_{max,L}(G, \mathcal{S}),$$
 and
  $$C_{max, L}^*(P_s(\Gamma))^{\Gamma}\widehat{\otimes}\mathcal{S} \cong C^*_{max, L}(\Gamma, \mathcal{S}).$$

 As a consequence of the Mayer--Vietoris sequence and the five lemma, we have the following Bott periodicity.
\begin{prop}
For each $s>0$, the maps
$$\beta_{L,*}: K_*(C_{max,L}^*(P_s(G), \mathcal{S})^G) \to K_*(C^*_{max,L}(P_s(G), \mathcal{A}(X))^G)$$
and
$$\beta_{L,*}: K_*(C_{max,L}^*(P_s(\Gamma), \mathcal{S})^{\Gamma}) \to K_*(C^*_{max,L}(P_s(\Gamma), \mathcal{A}(X))^{\Gamma})$$
induced by the asymptotic morphisms $(\beta_{L,t})_{t \in [1, \infty)}$ on $K$-theory are isomorphisms.
\end{prop}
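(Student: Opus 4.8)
The plan is to prove that $\beta_{L,*}$ is an isomorphism by a cutting-and-pasting (Mayer--Vietoris) induction over the simplicial structure of the Rips complex, with the infinite-dimensional Bott periodicity theorem of Higson--Kasparov--Trout serving as the base case. Since the two assertions are proved by identical arguments, I would treat only the first and write $Z=P_s(G)$. The structural fact I would first isolate is that, for a fixed coefficient algebra $A\in\{\mathcal{S},\mathcal{A}(X)\}$, the assignment sending a $G$-invariant subcomplex $Y\subseteq Z$ to $K_*(C^*_{max,L}(Y,A)^G)$ behaves as a $G$-equivariant homology theory: it is invariant under $G$-equivariant homotopy, and for a $G$-invariant decomposition $Y=Y_1\cup Y_2$ into subcomplexes it fits into a six-term Mayer--Vietoris exact sequence relating $Y_1$, $Y_2$ and $Y_1\cap Y_2$. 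Crucially, because the asymptotic morphism $\beta_{L,t}$ is defined entrywise on the matrix coefficients $T_{x,y}$ and does not enlarge supports or propagation, it restricts compatibly to every $G$-invariant subcomplex, and so defines a natural transformation between the two homology theories that commutes with the Mayer--Vietoris boundary maps.

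Granting this, I would argue by induction on the number of $G$-orbits of simplices in $Z$ (finite after quotienting by $G$, since the action is cocompact), filtering $Z$ by its equivariant skeleta. Passing from the $(n-1)$-skeleton to the $n$-skeleton, the relative term is a disjoint union, over $G$-orbits of $n$-simplices, of equivariant neighborhoods of single orbits $G\cdot\sigma$. Writing out the Mayer--Vietoris sequences for the source coefficient algebra $\mathcal{S}$ and the target coefficient algebra $\mathcal{A}(X)$, and comparing them via the natural transformation $\beta_{L,*}$, the five lemma reduces the whole problem to showing that $\beta_{L,*}$ is an isomorphism on each single-orbit piece $G\cdot\sigma$.

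For a single orbit $G\cdot\sigma$ with finite stabilizer $F$, I would use the $G$-equivariant identification $G\cdot\sigma\cong G\times_F\sigma$ together with the standard induction/restriction isomorphism for equivariant localization algebras to obtain
$$K_*\big(C^*_{max,L}(G\cdot\sigma,A)^G\big)\cong K_*\big(C^*_{max,L}(\sigma,A)^F\big).$$
Since $\sigma$ is $F$-equivariantly contractible to its barycentre, the right-hand side is $K_*(A\rtimes F)$, and under these identifications $\beta_{L,*}$ becomes exactly the Bott map $\beta_*\colon K_*(\mathcal{S}\rtimes F)\to K_*(\mathcal{A}(X)\rtimes F)$. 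Finally, because $X$ is $F$-contractible (it is a compact convex set on which the finite group $F$ acts, hence possesses a fixed point $x_0$ onto which it contracts $F$-equivariantly), the continuous-field structure of $\mathcal{A}(X)$ should allow one to identify $\mathcal{A}(X)\rtimes F$ with its fibre $\mathcal{A}(\mathcal{H}_{x_0})\rtimes F$ at the level of $K$-theory. The infinite-dimensional Bott periodicity theorem, applied to the finite group $F$ acting on $\mathcal{H}_{x_0}$ by isometries, then shows that $\beta_*$ is an isomorphism, completing the induction.

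The Mayer--Vietoris bookkeeping is routine. I expect the substantive points to be two: first, verifying that $\beta_{L,t}$ is genuinely natural with respect to the Mayer--Vietoris boundary homomorphisms, so that the five-lemma comparison of the two exact sequences is legitimate; and second, the base case, namely the reduction of $\mathcal{A}(X)\rtimes F$ to a single fibre using the $F$-contractibility of $X$ and the continuity of the field, followed by the application of Higson--Kasparov--Trout. The second step is where the essential analytic content of the theorem resides, and it is the part I would expect to require the most care.
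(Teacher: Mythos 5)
Your overall architecture coincides with the paper's intended proof: the paper compresses the argument into the phrase ``a consequence of the Mayer--Vietoris sequence and the five lemma,'' and what it means is precisely your scheme --- a cutting-and-pasting induction over the finitely many $G$-orbits of simplices of $P_s(G)$, using the Mayer--Vietoris sequence and strong Lipschitz homotopy invariance for the $K$-theory of localization algebras, the induction/restriction isomorphism reducing an orbit $G\cdot\sigma$ with finite stabilizer $F$ to $K_*(\mathcal{A}\rtimes F)$, and the five lemma applied to the natural transformation induced by the fibrewise Bott maps (which indeed commute with the Mayer--Vietoris boundary maps because $\beta_{L,t}$ is defined entrywise and does not increase propagation). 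Up to this point your proposal is correct and matches the paper.

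The genuine gap is in your base case. You assert that, since $X$ is $F$-equivariantly contractible to a fixed point $x_0$, ``the continuous-field structure of $\mathcal{A}(X)$ should allow one to identify $\mathcal{A}(X)\rtimes F$ with its fibre $\mathcal{A}(\mathcal{H}_{x_0})\rtimes F$ at the level of $K$-theory.'' This step would fail as stated: evaluation at a fibre of a continuous field over a contractible compact base is \emph{not} in general a $K$-equivalence. Continuous fields of infinite-dimensional Hilbert spaces need not be locally trivial (this goes back to Dixmier--Douady), so the field $\bigl(\mathcal{A}(\mathcal{H}_x)\bigr)_{x\in X}$ admits no trivialization, and the contraction of the base does not lift to a homotopy of $*$-homomorphisms, because the fibres $\mathcal{H}_x$ genuinely vary with $x$; there is no natural map in either direction between $\mathcal{A}(X)$ and $C(X)\otimes\mathcal{A}(\mathcal{H}_{x_0})$ implementing the contraction. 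This is exactly why the paper does not argue this way: it imports the base case as a black box from \cite{Deng-Novikov-extension}, namely the theorem that for every finite subgroup $F$ the Bott map $\beta_*\colon K_*(\mathcal{S}\rtimes F)\to K_*(\mathcal{A}(X)\rtimes F)$ is an isomorphism. The proof there (in the tradition of \cite{KaHiTr} and \cite{Tu_BC}) works over the whole field: one constructs a Dirac-type asymptotic morphism inverse to $\beta$ on the continuous field itself and verifies that both compositions are homotopic to the identity, using infinite-dimensional Bott periodicity fibrewise together with the properness of the action --- it never collapses the field to a single fibre. To complete your argument you should either quote Deng's theorem at the base case or reproduce that field-level Dirac--dual-Dirac argument; you correctly identified where the analytic content lies, but the fixed-point evaluation shortcut rests on a false general principle about continuous fields.
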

\begin{comment}
\begin{proof}
Using the strongly Lipschitz homotopy equivalence of the $K$-theory of localization algebras and the Mayer--Vietoris sequence for the $K$-theory of localization algebras (cf. \cite{Yu_localization}), and the five lemma.
\end{proof}
\end{comment}
%the proposition follows from Theorem \ref{thm-Bott-for-finite-subgroup} and
%By the constructions of the homomorphisms between maximal Roe algebras in Section \ref{section-relative-Roe}, we have the asymptotic morphisms
%$$\beta_t \colon C_{max}^*(P_s(G),\mathcal{S})^G \to C_{max}^*(P_s(G),\mathcal{A}(X))^G$$
%and
%$$\beta_t \colon C_{max}^*(P_s(\Gamma),\mathcal{S})^{\Gamma} \to  C_{max}^*(P_s(\Gamma),\mathcal{A}(X))^{\Gamma}$$
%for all $t \in [1, \infty)$.

By the definition of the above asymptotic morphisms, the following diagram
\begin{equation*}
\begin{tikzcd}
    C_{max}^*(P_s(G),\mathcal{S})^G \ar[r, "\beta_t"]\ar[d]& C_{max}^*(P_s(G),\mathcal{A}(X))^G\ar[d]\\
    C_{max}^*(P_s(\Gamma),\mathcal{S})^{\Gamma} \ar[r, "\beta_t"] &C_{max}^*(P_s(\Gamma),\mathcal{A}(X))^{\Gamma}
\end{tikzcd}
\end{equation*}
asymptotically commutes. As a result, we obtain an asymptotic morphism between relative Roe algebras
$$
\beta_t: C^*_{max}(P_s(G), P_s(\Gamma),  \mathcal{S})^{G, \Gamma}\to C^*_{max}(P_s(G), P_s(\Gamma), \mathcal{A}(X))^{G,\Gamma}
$$
for all $t \in [1,\infty)$ and $s>0$. Similarly, the asymptotic morphism $\left(\beta_t: \mathcal{S} \to \mathcal{A}(X)\right)_{t \in [1,\infty)}$ also induces an asymptotic morphism between relative localization algebras
$$\beta_{L,t}: C_{max,L}^*(P_s(G), P_{s}(\Gamma), \mathcal{S})^{G, \Gamma} \to  C_{max,L}^*(P_s(G), P_{s}(\Gamma),\mathcal{A}(X))^{G, \Gamma},$$
for $t \in [1,\infty)$.
Therefore, we have a homomorphism
$$\beta_{L,*}: K_*(C_{max,L}^*(P_s(G),P_{s}(\Gamma), \mathcal{S})^{G, \Gamma})\to   K_*(C_{max,L}^*(P_s(G),P_{s}(\Gamma),\mathcal{A}(X))^{G, \Gamma})$$
induced by the asymptotic morphism above on $K$-theory. Passing to inductive limits, we have the relative Bott map
$$\beta_{L, *}^{G, \Gamma}\colon  K_{*}^{G,\Gamma}(\underline{E}G,\underline{E}\Gamma,\mathcal{S})\rightarrow K_{*}^{G,\Gamma}(\underline{E}G,\underline{E}\Gamma,\mathcal{A}(X)).$$

We shall prove the following relative Bott periodicity.
\begin{prop}[Relative Bott Periodicity]\label{prop-rel-Bott}
The relative Bott map
$$\beta_{L, *}^{G, \Gamma}\colon  K_{*}^{G,\Gamma}(\underline{E}G,\underline{E}\Gamma,\mathcal{S})\rightarrow K_{*}^{G,\Gamma}(\underline{E}G,\underline{E}\Gamma,\mathcal{A}(X))$$
induced by the asymptotic morphism between relative localization algebras is an isomorphism.
\end{prop}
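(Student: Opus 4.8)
The plan is to leverage the fact that, by construction, the relative localization algebra $C_{max,L}^*(P_s(G), P_s(\Gamma), \mathcal{S})^{G,\Gamma}$ is the suspension of the mapping cone of $h_{max,L,s}\colon C_{max,L}^*(P_s(G), \mathcal{S})^G \to C_{max,L}^*(P_s(\Gamma), \mathcal{S})^{\Gamma}$, and likewise with coefficients in $\mathcal{A}(X)$. Each such mapping cone produces the six-term exact sequence recorded in Section 2, and since the inductive limit over the Rips scale $s$ is an exact functor, passing to the limit yields a six-term exact sequence relating $K_{*}^{G,\Gamma}(\underline{E}G,\underline{E}\Gamma,\mathcal{S})$ to the absolute groups $K_{*}^{G}(\underline{E}G,\mathcal{S}):=\lim_{s}K_*(C_{max,L}^*(P_s(G),\mathcal{S})^G)$ and $K_{*}^{\Gamma}(\underline{E}\Gamma,\mathcal{S})$; the analogous six-term sequence holds with $\mathcal{A}(X)$ in place of $\mathcal{S}$.

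The key structural observation is that the relative Bott map $\beta_{L,*}^{G,\Gamma}$ is not isolated: the asymptotic morphism $\beta_{L,t}$ between relative localization algebras was built precisely from the asymptotically commuting square relating the $G$- and $\Gamma$-Bott maps with the structure maps $h_{max,L,s}$. Consequently, the relative Bott map fits into a commuting ladder between the two six-term exact sequences, in which the flanking vertical maps are the absolute Bott maps
$$\beta_{L,*}^{G}\colon K_{*}^{G}(\underline{E}G,\mathcal{S})\to K_{*}^{G}(\underline{E}G,\mathcal{A}(X)),\qquad \beta_{L,*}^{\Gamma}\colon K_{*}^{\Gamma}(\underline{E}\Gamma,\mathcal{S})\to K_{*}^{\Gamma}(\underline{E}\Gamma,\mathcal{A}(X)),$$
and the middle vertical map is exactly $\beta_{L,*}^{G,\Gamma}$.

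I would then invoke the preceding Proposition, which states that for each fixed $s$ the Bott maps on the $G$- and $\Gamma$-localization algebras are isomorphisms on $K$-theory. Since inductive limits of isomorphisms are isomorphisms, the absolute Bott maps $\beta_{L,*}^{G}$ and $\beta_{L,*}^{\Gamma}$ are isomorphisms. Applying the five lemma to the ladder of six-term exact sequences then forces the middle vertical arrow $\beta_{L,*}^{G,\Gamma}$ to be an isomorphism, which is the desired conclusion.

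The main obstacle I anticipate is the functoriality of the mapping-cone and suspension constructions with respect to \emph{asymptotic} morphisms rather than genuine $*$-homomorphisms: one must verify that the asymptotically commuting square genuinely induces a well-defined morphism of mapping cones, hence a commuting ladder at the level of $K$-theory. I would handle this by passing to the $E$-theory category, where $(\beta_t)$ defines a class in $E(\mathcal{S},\mathcal{A}(X))$ that is natural with respect to the relevant $*$-homomorphisms and where the mapping-cone six-term exact sequence is natural, so that the ladder and five-lemma argument become legitimate. The only remaining point of care is that the inductive limit over $s$ is compatible with both the six-term sequences and the maps $\beta_{L,t}$, which follows from exactness of the limit together with the compatibility of $\beta_{L,t}$ with the inclusions induced by $P_r(G)\hookrightarrow P_s(G)$ and $P_r(\Gamma)\hookrightarrow P_s(\Gamma)$ for $r\leq s$.
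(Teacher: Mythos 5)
Your proposal is correct and follows essentially the same route as the paper: the paper's proof consists precisely of the commuting ladder of long exact sequences (induced by the mapping-cone structure of the relative localization algebras) connecting the relative Bott map to the absolute Bott maps $\beta_{L,*}^{G}$ and $\beta_{L,*}^{\Gamma}$, which are isomorphisms by the preceding proposition, followed by the five lemma. Your extra paragraph on justifying functoriality of the mapping cone under asymptotic morphisms via $E$-theory addresses a technical point the paper leaves implicit, but it is a refinement of, not a departure from, the paper's argument.
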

\begin{proof}
We have the following commutative diagram:

\begin{equation*}
    \begin{tikzcd}
   K_{*+1}^{G}(\underline{E}G,\mathcal{S}) \ar{r}{\beta_{L,*}^G}[swap]{\cong}\ar{d}
   &K_{*+1}^{G}(\underline{E}G,\mathcal{A}(X)) \ar{d}\\
K_{*+1}^{\Gamma}(\underline{E}\Gamma,\mathcal{S})  \ar{r}{\beta_{L,*}^\Gamma}[swap]{\cong}\ar{d}
&  K_{*+1}^{\Gamma}(\underline{E}\Gamma,\mathcal{A}(X))\ar{d}\\
K_{*+1}^{G,\Gamma}(\underline{E}G,\underline{E}\Gamma,\mathcal{S})\ar{r}{\beta_{L, *}^{G, \Gamma}} \ar{d}
& K_{*+1}^{G,\Gamma}(\underline{E}G,\underline{E}\Gamma,\mathcal{A}(X))\ar{d}\\
  K_{*}^{G}(\underline{E}G,\mathcal{S}) \ar{d}\ar{r}{ \beta_{L,*}^G}[swap]{\cong}
  &    K_{*}^{G}(\underline{E}G,\mathcal{A}(X))  \ar{d}\\
K_{*}^{\Gamma}(\underline{E}\Gamma,\mathcal{S}) \ar{r}{\beta_{L,*}^\Gamma}[swap]{\cong}
&  K_{*}^{\Gamma}(\underline{E}\Gamma,\mathcal{A}(X)).
\end{tikzcd}
\end{equation*}

Since the Bott maps $\beta_{L,*}^G$ and $\beta_{L, *}^{\Gamma}$ are isomorphic, it follows from the five lemma that the map
$$\beta_{L, *}^{G, \Gamma}\colon  K_{*}^{G,\Gamma}(\underline{E}G,\underline{E}\Gamma,\mathcal{S})\rightarrow K_{*}^{G,\Gamma}(\underline{E}G,\underline{E}\Gamma,\mathcal{A}(X))$$
is an isomorphism.

\end{proof}

\section{The proofs of the main results}
In this section, we shall prove the maximal strong relative Novikov conjecture  and the reduced strong relative Novikov conjecture with coefficients in a ${\rm II}_1$-factor for a pair of groups $(G, \Gamma)$ under certain assumptions on the geometry of $\Gamma$ and the kernel of the homomorphism $h: G \to \Gamma$.

\subsection{The maximal strong relative Novikov conjecture}
Let us first prove the maximal strong relative Novikov conjecture for the following pairs of groups $(G, \Gamma)$.
\begin{thm}\label{thm-maxi-Novikov}
Let $G$ and $\Gamma$ be finitely generated groups and $h: G \to \Gamma$ a group homomorphism with  the maximal good kernel property (cf. Definition \ref{weakgamma}). Assume that $\Gamma$ admits a coarse embedding into Hilbert space. Then the maximal strong Novikov conjecture holds for $(G, \Gamma)$, i.e. the maximal relative assembly map
$$\mu_{max}:K_{*}^{G,\Gamma}(\underline{E}G,\underline{E}\Gamma)\to K_{*}(C_{max}^*(G,\Gamma))$$
is injective.
\end{thm}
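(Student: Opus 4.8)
The plan is to prove injectivity of $\mu_{max}$ by a relative Bott periodicity argument that reduces the statement, with coefficients, to the proper $G$-$\Gamma$-algebra $\mathcal{A}(X)$ furnished by the coarse embedding of $\Gamma$, where the relative assembly map turns out to be an isomorphism. This follows the pattern of Yu's coarse-embedding proof and the Higson--Kasparov--Trout machinery, now carried out in the relative (mapping-cone) setting.

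First I would pass to coefficients in $\mathcal{S}$. Since $\mathcal{S}=C_0(\mathbb{R})$ carries trivial $G$- and $\Gamma$-actions, every localization and Roe algebra in sight tensors with $\mathcal{S}$, and the relative assembly map with coefficients in $\mathcal{S}$ is simply $\mu_{max}\widehat{\otimes}\,\mathrm{id}_{\mathcal{S}}$, i.e. the suspension of $\mu_{max}$. Thus $\mu_{\mathcal{S}}$ is injective if and only if $\mu_{max}$ is, and it suffices to treat $\mu_{\mathcal{S}}$. The asymptotic morphism $(\beta_t\colon\mathcal{S}\to\mathcal{A}(X))_{t\in[1,\infty)}$ then gives, by naturality of the evaluation maps, the commuting square
\[
\begin{tikzcd}
K_{*}^{G,\Gamma}(\underline{E}G,\underline{E}\Gamma,\mathcal{S}) \ar{r}{\mu_{\mathcal{S}}} \ar{d}{\beta_{L,*}^{G,\Gamma}}[swap]{\cong} & K_*(C^*_{max}(G,\Gamma,\mathcal{S})) \ar{d}{\beta_*} \\
K_{*}^{G,\Gamma}(\underline{E}G,\underline{E}\Gamma,\mathcal{A}(X)) \ar{r}{\mu_{\mathcal{A}(X)}} & K_*(C^*_{max}(G,\Gamma,\mathcal{A}(X)))
\end{tikzcd}
\]
in which the left vertical map is an isomorphism by the Relative Bott Periodicity of Proposition \ref{prop-rel-Bott}. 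Hence $\beta_*\circ\mu_{\mathcal{S}}=\mu_{\mathcal{A}(X)}\circ\beta_{L,*}^{G,\Gamma}$, so injectivity of $\mu_{\mathcal{S}}$ will follow once I show $\mu_{\mathcal{A}(X)}$ is injective; I will in fact aim to prove it is an isomorphism.

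To establish that $\mu_{\mathcal{A}(X)}$ is an isomorphism I would use the commuting ladder between the two six-term exact sequences coming from the mapping cones that define $K_{*}^{G,\Gamma}(\underline{E}G,\underline{E}\Gamma,\mathcal{A}(X))$ and $K_*(C^*_{max}(G,\Gamma,\mathcal{A}(X)))$. The outer vertical arrows of this ladder are the ordinary $G$- and $\Gamma$-equivariant assembly maps with coefficients in $\mathcal{A}(X)$, so by the five lemma it is enough to show both of them are isomorphisms. For $\Gamma$ this is immediate: $\mathcal{A}(X)$ is a proper $\Gamma$-$C^*$-algebra, and for proper coefficients the maximal (which here agrees with the reduced) Baum--Connes assembly map is always an isomorphism. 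The essential difficulty is the $G$-side, and this is where the maximal good kernel property enters.

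The hard part will be the $G$-equivariant assembly map with coefficients in $\mathcal{A}(X)$, because the $G$-action factors through $h$ and is therefore \emph{not} proper: $\ker(h)$ acts trivially, so properness alone gives nothing. My approach is to exploit the proper $h(G)$-space $X$ underlying $\mathcal{A}(X)$. Decomposing the Rips-complex model $P_s(G)$ of $\underline{E}G$ into $G$-translates of precompact pieces and running the associated Mayer--Vietoris / going-down argument, the local isotropy computations reduce to the Baum--Connes conjecture with coefficients for the subgroups $h^{-1}(F)\subseteq G$, where $F$ runs over the finite subgroups of $h(G)\subseteq\Gamma$. Each such $h^{-1}(F)$ contains $\ker(h)$ with $[h^{-1}(F):\ker(h)]=|F|<\infty$, so these are exactly the subgroups for which the maximal good kernel property (Definition \ref{weakgamma}) asserts the maximal Baum--Connes conjecture with coefficients. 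Feeding this into the five lemma and passing to the inductive limit over $s$ yields that the $G$-assembly map with coefficients in $\mathcal{A}(X)$ is an isomorphism. With both outer maps isomorphisms, the five lemma gives that $\mu_{\mathcal{A}(X)}$ is an isomorphism; combined with the Bott square and the suspension identification, this forces $\mu_{max}$ to be injective. The two places demanding real care are verifying that the relevant isotropy groups in the $G$-side decomposition are precisely the finite-index-over-$\ker(h)$ subgroups, and checking compatibility of all these reductions with the maximal crossed products throughout.
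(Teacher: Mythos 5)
Your proposal follows essentially the same route as the paper's proof: suspend by $\mathcal{S}$, use the relative Bott periodicity of Proposition \ref{prop-rel-Bott} to reduce injectivity of $\mu_{max}$ to showing that $\mu^{\mathcal{A}(X)}_{max}$ is an isomorphism, then run the five lemma on the mapping-cone ladder, with the $\Gamma$-side handled by properness of $\mathcal{A}(X)$ and the $G$-side by cutting-and-pasting down to the subgroups $h^{-1}(F_i)\supseteq \ker(h)$ of finite index over $\ker(h)$, exactly where the maximal good kernel property is invoked. One small correction to your phrasing: the decomposition is performed on the coefficient algebra, not on the Rips complex $P_s(G)$ --- the paper writes $\mathcal{A}(X)=\varinjlim \mathcal{A}_\alpha$ with each $\mathcal{A}_\alpha$ proper over a cocompact $h(G)$-space $W_\alpha=\bigcup_i Y_i\times_{F_i} h(G)$, yielding $C^*_{max}(G,B)\cong C^*_{max}\bigl(h^{-1}(F_i),B_0\bigr)\otimes K$ --- since decomposing $P_s(G)$ into $G$-translates of precompact pieces would only produce finite subgroups of $G$ as isotropy and would not decompose the crossed product on the index side, whereas your identification of the relevant subgroups $h^{-1}(F_i)$ is exactly right.
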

In particular, if the group $\Gamma$ admits a coarse embedding into Hilbert space and $\ker(h)$ is a-T-menable, it follows from Theorem \ref{thm-maxi-Novikov} that the maximal strong Novikov conjecture holds for $(G, \Gamma)$, i.e. the maximal relative assembly map
$$\mu_{max}:K_{*}^{G,\Gamma}(\underline{E}G,\underline{E}\Gamma)\to K_{*}(C_{max}^*(G,\Gamma))$$
is an isomorphism. The special case where  both $G$ and $\Gamma$ are a-T-menable was proved by the second author in  \cite{tian-thesis}.

\begin{proof}[Proof of Theorem \ref{thm-maxi-Novikov}]
For each $s>0$, we have the asymptotically commutative diagram:
\begin{equation*}
\begin{tikzcd}
    C_{L}^*(P_s(G), P_{s}(\Gamma),\mathcal{S})^{G, \Gamma} \ar[r]\ar[d]& C_{max}^*(G,\Gamma,\mathcal{S})\ar[d]\\
    C_{L}^*(P_s(G), P_{s}(\Gamma),\mathcal{A}(X))^{G, \Gamma} \ar[r] &C_{max}^*(G,\Gamma,\mathcal{A}(X)).
\end{tikzcd}
\end{equation*}
Passing to inductive limits gives rise to the following commutative diagram:
\begin{equation*}
   \begin{tikzcd}
    K_{*}^{G,\Gamma}(\underline{E}G,\underline{E}\Gamma,\mathcal{S})\ar{r}{\mu_{max}}\ar{d}{\beta_{L,*}^{G, \Gamma}}
    & K_{*}(C_{max}^*(G,\Gamma,\mathcal{S})) \ar{d}{\beta_*}\\
   K_{*}^{G,\Gamma}(\underline{E}G,\underline{E}\Gamma,\mathcal{A}(X))\ar{r}{\mu^{\mathcal{A}(X)}_{max}}&
   K_{*}(C_{max}^*(G,\Gamma, \mathcal{A}(X))).
   \end{tikzcd}
\end{equation*}
Since $\beta_{L, *}^{G, \Gamma}$ is an isomorphism by Proposition \ref{prop-rel-Bott}, it suffice to show that the relative assembly map with coefficients in $\mathcal{A}(X)$
$$\mu^{\mathcal{A}(X)}_{max}:K_{*}^{G,\Gamma}(\underline{E}G,\underline{E}\Gamma,\mathcal{A}(X))\to
   K_{*}(C_{max}^*(G,\Gamma, \mathcal{A}(X)))$$
   is an isomorphism.

Consider the commutative diagram:
\begin{equation*}
    \xymatrix@C+3pc{
    K_{*+1}^{G}(\underline{E}G,\mathcal{A}(X)) \ar[d]\ar[r]^{\mu_{G}^{\mathcal{A}(X)}}
    &  K_{*+1}(C_{max}^*(G,\mathcal{A}(X))) \ar[d]\\
K_{*+1}^{\Gamma}(\underline{E}\Gamma,\mathcal{A}(X))\ar[r]^{\mu_{\Gamma}^{\mathcal{A}(X)}}\ar[d]&  K_{*+1}(C_{max}^*(\Gamma,\mathcal{A}(X)))  \ar[d]\\
K_{*+1 }^{G,\Gamma}(\underline{E}G,\underline{E}\Gamma,\mathcal{A}(X))\ar[r]^{\mu_{max}^{\mathcal{A}(X)}}\ar[d]
& K_{*+1}(C_{max}^*(G,\Gamma, \mathcal{A}(X)))\ar[d]\\
  K_{*}^{G}(\underline{E}G,\mathcal{A}(X)) \ar[r]^{\mu_{G}^{\mathcal{A}(X)}}\ar[d]
  &  K_{*}(C_{max}^*(G,\mathcal{A}(X)))\ar[d] \\
K_{*}^{\Gamma}(\underline{E}\Gamma,\mathcal{A}(X)) \ar[r]^{\mu_{\Gamma}^{\mathcal{A}(X)}} &  K_{*}(C_{max}^*(\Gamma,\mathcal{A}(X))).
    }
\end{equation*}
Since $\mathcal{A}(X)$ is a proper $\Gamma$-$C^*$-algebra, then we have that the assembly map
$$
\mu_{\Gamma}^{\mathcal{A}(X)}:K^{\Gamma}_*(\underline{E}\Gamma, \mathcal{A}(X)) \to K_*(C^*_{max}(\Gamma, \mathcal{A}(X)))
$$
is an isomorphism. We will use the five lemma to prove that the assembly map $\mu_{max}^{\mathcal{A}(X)}$ is an isomorphism. For this purpose, we shall show that the assembly map
$$
\mu_{G}^{\mathcal{A}(X)}: K^G_*(\underline{E}G, \mathcal{A}(X)) \to K_*(C^*_{max}(G, \mathcal{A}(X)))
$$
is an isomorphism. We remark here that although the action of $G$ on $\mathcal{A}(X)$ is not proper, we can prove that the assembly map $\mu_{G}^{\mathcal{A}(X)}$ is an isomorphism using the cutting-and-pasting method.

Since $\mathcal{A}(X)$  is a proper $\Gamma$-algebra, it is also an $h(G)$-proper algebra. We can express the algebra as a direct limit
$$\mathcal{A}(X)=\lim\limits_{\longrightarrow}\mathcal{A}_\alpha,$$
where  each $\mathcal{A}_\alpha$ is an ideal of $\mathcal{A}(X)$, and a proper $h(G)$-$C^*$-algebra over a proper, cocompact and locally compact Hausdorff $h(G)$-space $W_\alpha$. It suffices to prove that the maximal assembly map
$$\mu_{max}: K_{*}^{G}(\underline{E}G,\mathcal{A}_\alpha) \stackrel{  }{\longrightarrow} K_{*}(C_{max}^*(G,\mathcal{A}_\alpha)) $$
is an isomorphism.
%it suffices to show
%$$\mu_{max}: K_{*}^{G}(\underline{E}G,B) \stackrel{  }{\longrightarrow} K_{*}(C_{max}^*(G,B)) $$ where

 Let $F \subset h(G)$ be a finite subgroup, and $Y$ a $F$-space. Denote $Y\times_F h(G)$ to be the quotient space of the product space $Y \times h(G)$ over the $F$-action by $\gamma \cdot (y, g)=(\gamma y, \gamma g)$ for all $\gamma \in F$, $(y, g) \in Y \times h(G)$. If $Y\subset W_{\alpha}$ is a $F$-invariant subset such that $gY\cap Y=\emptyset $ for each $g \notin F$, then we can view $Y\times_{F}h(G)$ as the subset $h(G)\cdot Y$ of $Y$ via the map $[(y, g)] \mapsto gy$ for all $[(y, g)] \in Y\times_F h(G)$.

 Since the locally compact space $W_{\alpha}$ is $h(G)$-proper and cocompact, it is a finite union of the form
$$W_{\alpha} = \bigcup_i^{n} Y_i \times_{F_i} h(G),$$
where each $F_i$ is a finite subgroup of $h(G)$, and $Y_i$ is a  precompact $F_i$-space for $1 \leq i \leq n$.

For each $i$, denote by $B=C_0(Y_i\times_{F_i}h(G))\cdot \mathcal{A}_{\alpha}$ the proper $h(G)$-$C^*$-subalgebra of $\mathcal{A}_{\alpha}$. Let $B_0=C_0(Y_i)\cdot \mathcal{A}_{\alpha}$.  The $C^*$-algebra $B$ is equipped with a $G$-action by lifting the $h(G)$-action on $B$ and $B_0$ is equipped with an $h^{-1}(F_i)$-action by lifting the $F_i$-action on $B_0$. Note that
$$
C^*_{max}(G, B)\cong C^*_{max}( h^{-1}(F_i), B_0) \otimes K,
$$
where $K$ is the algebra of compact operators.
 As a result, we have the following commutative diagram
\begin{equation*}
  \begin{tikzcd}
    K_{*}^{G}(\underline{E}G,B) \ar{d}{\cong}\ar[r] &K_{*}(C_{max}^*(G,B)) \ar{d}{\cong}\\
K_{*}^{h^{-1}(F_i)}(\underline{E}(h^{-1}(F_i)),B_0)\ar[r] &K_{*}(C_{max}^*(h^{-1}(F_i),B_0)).
  \end{tikzcd}
\end{equation*}
Since $h: G\to \Gamma$ has the maximal good kernel property, and $[h^{-1}(F_i): \ker(h)]<\infty$, the bottom map is isomorphic. Therefore, the assembly map
$$
\mu: K^G_*(\underline{E}G, B) \to K_*(C^*_{max}(G, B))
$$
is an isomorphism. It follows from   the Mayer--Vietoris sequence and the five lemma
that the assembly map
$$\mu_{G}^{\mathcal{A}_{\alpha}}: K^G_*(\underline{E}G, \mathcal{A}_{\alpha})\to K_*(C^*_{max}(G, \mathcal{A}_{\alpha})$$
is an isomorphism. Passing to the inductive limit, we have that the assembly map
$$\mu_{G}^{\mathcal{A}(X)}: K^G_*(\underline{E}G, \mathcal{A}(X))\to K_*(C^*_{max}(G, \mathcal{A}(X))$$ is isomorphic. As a result,
$$\mu^{\mathcal{A}(X)}_{max}:K^{G, \Gamma}_*(\underline{E}G, \underline{E}\Gamma, \mathcal{A}(X))\to K_*(C^*_{max}(G, \Gamma, \mathcal{A}(X)))$$
is an isomorphism.
Therefore, the relative assembly map
$$\mu_{max}:K_{*}^{G,\Gamma}(\underline{E}G,\underline{E}\Gamma)\to
   K_{*}(C_{max}^*(G,\Gamma))$$
   is injective. This finishes the proof.

\end{proof}

\subsection{The reduced strong relative Novikov conjecture}
In this subsection, we shall prove the reduced strong  relative Novikov conjecture with coefficients in a ${\rm II}_1$-factor $\mathcal{M}$ for the following pairs of groups $(G, \Gamma)$.

\begin{thm}\label{thm-reduced-Novi-real-coef}
Let $h\colon  G \to \Gamma$ be a group homomorphism with the reduced good kernel property (cf. Definition \ref{weakgamma}). Let $\phi: C^*_{red}(G) \hookrightarrow \mathcal M$ be a trace-preserving embedding. Assume that $\Gamma$ admits a coarse embedding into Hilbert space. Then the reduced strong relative Novikov conjecture holds for $h\colon G\to \Gamma$, i.e.,  the reduced  relative assembly map
$$\mu_{red}: K^{G, \Gamma}_*(\underline{E}G, \underline{E}\Gamma, \mathcal M) \to K_*(C^*_{red}(G, \Gamma, \mathcal M))$$
is injective.
\end{thm}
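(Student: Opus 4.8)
The plan is to run the argument of Theorem~\ref{thm-maxi-Novikov} in the reduced setting, replacing maximal completions by reduced ones, the maximal good kernel property by the reduced good kernel property, and the trivial coefficient by the ${\rm II}_1$-factor $\mathcal M$ carried through the $\phi$-twisted construction of Section 3. In parallel with the $\mathcal M$-coefficient relative group $C^*$-algebra and relative localization algebra, I would first form their evident $\mathcal A(X)$-coefficient analogues: on the $G$-side one uses the coefficient $\mathcal A(X)\otimes\mathcal M$ and on the $\Gamma$-side $\mathcal A(X)\otimes\mathcal M\otimes\mathcal M$, and one takes the suspended mapping cone of the $\phi$-twisted map $h^{\mathcal A(X)}_{red,\mathcal M}$ built exactly as $h_{red,\mathcal M}$ in Lemma~\ref{lemmakey}, where $\mathcal A(X)$ is the proper $\Gamma$-$C^*$-algebra attached to the coarse embedding $\Gamma\hookrightarrow H$. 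The reduced Bott asymptotic morphism $\beta_t\colon \mathcal S\to\mathcal A(X)$ of Section 4, being natural, then induces a relative Bott map fitting into the commutative square
\[
\begin{tikzcd}
K_*^{G,\Gamma}(\underline{E}G,\underline{E}\Gamma,\mathcal M) \ar{r}{\mu_{red}} \ar{d}{\beta_{L,*}^{G,\Gamma}}[swap]{\cong}& K_*(C^*_{red}(G,\Gamma,\mathcal M)) \ar{d}{\beta_*}\\
K_*^{G,\Gamma}(\underline{E}G,\underline{E}\Gamma,\mathcal A(X),\mathcal M) \ar{r}{\mu_{red}^{\mathcal A(X)}}& K_*(C^*_{red}(G,\Gamma,\mathcal A(X),\mathcal M)),
\end{tikzcd}
\]
where, exactly as in the proof of Theorem~\ref{thm-maxi-Novikov}, the $\mathcal S$-coefficient version is identified with the map in the statement. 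Granting that the left vertical arrow is an isomorphism, it suffices to prove that $\mu_{red}^{\mathcal A(X)}$ is an isomorphism: commutativity then forces $\beta_*\circ\mu_{red}$, and hence $\mu_{red}$, to be injective.

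To see that $\beta_{L,*}^{G,\Gamma}$ is an isomorphism I would argue as in Proposition~\ref{prop-rel-Bott}. The suspended mapping cone yields a six-term exact sequence relating the relative $K$-groups to the absolute localization $K$-groups of $G$ (coefficient $\mathcal A(X)\otimes\mathcal M$) and of $\Gamma$ (coefficient $\mathcal A(X)\otimes\mathcal M\otimes\mathcal M$); a ladder of these sequences and the five lemma reduce the claim to the absolute Bott maps $\beta_{L,*}^{G}$ and $\beta_{L,*}^{\Gamma}$. These are isomorphisms because the infinite-dimensional Bott periodicity of Higson--Kasparov--Trout produces a $C^*$-equivalence $\mathcal S\to\mathcal A(E)$, which remains an equivalence after tensoring with the fixed, trivially-acting algebra $\mathcal M$ (resp.\ $\mathcal M\otimes\mathcal M$); the same Mayer--Vietoris and five lemma argument over the skeleta of $P_s(G)$ and $P_s(\Gamma)$ used for the absolute Bott maps in Section 4 (reducing to finite subgroups, where trivial-action coefficients are harmless) then upgrades this to the localization level.

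The heart of the proof is to show that $\mu_{red}^{\mathcal A(X)}$ is an isomorphism, and I would again use a six-term ladder for the suspended mapping cone, comparing $\mu_{red}^{\mathcal A(X)}$ with the absolute assembly maps $\mu_{\Gamma}^{\mathcal A(X)\otimes\mathcal M\otimes\mathcal M}$ and $\mu_{G}^{\mathcal A(X)\otimes\mathcal M}$; by the five lemma it suffices that each of these is an isomorphism. The $\Gamma$-side is immediate: $\mathcal A(X)$ is a proper $\Gamma$-$C^*$-algebra, and tensoring with the trivially-acting $\mathcal M\otimes\mathcal M$ keeps it proper, so its assembly map is an isomorphism. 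The $G$-side is the main obstacle, since $G$ does \emph{not} act properly on $\mathcal A(X)$. Here I would repeat the cutting-and-pasting of Theorem~\ref{thm-maxi-Novikov}: write $\mathcal A(X)=\varinjlim\mathcal A_\alpha$ as a direct limit of ideals that are proper, cocompact $h(G)$-$C^*$-algebras, decompose each base space as a finite union $\bigcup_i Y_i\times_{F_i}h(G)$ with $F_i\subset h(G)$ finite, and use the identification
\[
C^*_{red}(G, B\otimes\mathcal M)\cong C^*_{red}\big(h^{-1}(F_i),\,B_0\otimes\mathcal M\big)\otimes K
\]
for the subalgebras $B=C_0(Y_i\times_{F_i}h(G))\cdot\mathcal A_\alpha$ and $B_0=C_0(Y_i)\cdot\mathcal A_\alpha$. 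Since $[h^{-1}(F_i):\ker(h)]<\infty$ and $h$ has the reduced good kernel property, the reduced Baum--Connes conjecture \emph{with coefficients} holds for $h^{-1}(F_i)$, so its assembly map with coefficient $B_0\otimes\mathcal M$ is an isomorphism; a Mayer--Vietoris/five lemma argument and passage to the inductive limit give that $\mu_{G}^{\mathcal A(X)\otimes\mathcal M}$ is an isomorphism.

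The points I expect to require genuine care are twofold. First, one must check that the $\phi$-twisted map $h^{\mathcal A(X)}_{red,\mathcal M}$ is well-defined at the \emph{reduced} level and that the reduced Bott asymptotic morphisms commute with it up to the relevant homotopy, so that the square and the ladders above are legitimate; this is the step at which reduced (rather than maximal) completions could cause difficulty, and it should be handled by the factorization used in Lemma~\ref{lemmakey}. Second, it is essential that the good kernel property supplies reduced Baum--Connes \emph{with arbitrary coefficients}, so that the presence of the ${\rm II}_1$-factor in the coefficient $B_0\otimes\mathcal M$ introduces no new obstruction in the $G$-side computation. Granting these compatibilities, the five lemma yields injectivity of $\mu_{red}$, completing the proof.
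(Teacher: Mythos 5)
Your proposal is correct and follows essentially the same route as the paper: the $\phi$-twisted reduced constructions with coefficients $\mathcal A(X)\otimes\mathcal M$ on the $G$-side and $\mathcal A(X)\otimes\mathcal M\otimes\mathcal M$ on the $\Gamma$-side, the relative Bott isomorphism via the five lemma, reduction to the isomorphism of $\mu_{red}^{\mathcal A(X)}$, and the cutting-and-pasting over $Y_i\times_{F_i}h(G)$ invoking the reduced good kernel property for $h^{-1}(F_i)$ all match the paper's argument, which itself defers to the maximal case for these steps. The two compatibility points you flag (well-definedness of the twisted map at the reduced level via the factorization of Lemma \ref{lemmakey}, and that the good kernel property supplies reduced Baum--Connes with arbitrary coefficients so that $B_0\otimes\mathcal M$ is admissible) are exactly the details the paper leaves implicit, and your treatment of them is sound.
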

As an example, when the group $\Gamma$ is coarsely embeddable into Hilbert space and $\ker(h)$ is a subgroup of a hyperbolic group, the reduced strong relative Novikov conjecture holds for $h\colon G\to \Gamma$.

The proof of Theorem \ref{thm-reduced-Novi-real-coef} is similar to that of Theorem \ref{thm-maxi-Novikov}. Let $\mathcal{A}(X)$ be the proper $\Gamma$-algebra defined in Section \ref{sectn-proper-algebra-ce}. We have the Bott asymptotic morphism
$$\beta_t: \mathcal{S}\to \mathcal{A}(X),$$
for all $t\in [1,\infty)$.
It induces asymptotic morphisms
$$\beta_t: C_{red}^*(G,\Gamma,\mathcal{S}\otimes \mathcal M)\to  C_{red}^*(G,\Gamma,\mathcal{A}(X)\otimes\mathcal  M)$$
and
$$\beta_t: C_{L}^*(P_s(G),P_{s}(\Gamma),\mathcal{S}\otimes \mathcal M)^{G, \Gamma} \to  C_{L}^*(P_s(G), P_{s}(\Gamma), \mathcal{A}(X)\otimes \mathcal M)^{G, \Gamma},$$
for all $t \in [1, \infty)$. The latter induces the following Bott map
$$\beta_{L, *}^{G, \Gamma}: K_{*}^{G,\Gamma}(\underline{E}G,\underline{E}\Gamma,\mathcal{S}\otimes \mathcal M)\rightarrow K_{*}^{G,\Gamma}(\underline{E}G,\underline{E}\Gamma,\mathcal{A}(X)\otimes \mathcal M).$$

\begin{prop} The Bott map
$$\beta_{L, *}^{G, \Gamma}: K_{*}^{G,\Gamma}(\underline{E}G,\underline{E}\Gamma,\mathcal{S}\otimes \mathcal M)\rightarrow K_{*}^{G,\Gamma}(\underline{E}G,\underline{E}\Gamma,\mathcal{A}(X)\otimes \mathcal M)$$ is an isomorphism.
\end{prop}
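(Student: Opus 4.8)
The plan is to reproduce the proof of Proposition \ref{prop-rel-Bott} almost verbatim, carrying the ${\rm II}_1$-factor coefficients along. Recall that $K_{*}^{G,\Gamma}(\underline{E}G,\underline{E}\Gamma, B\otimes \mathcal M)$ is the $K$-theory of the suspended mapping cone of the $\ast$-homomorphism $h_{L,\mathcal M}\colon C_{L}^*(P_s(G), B\otimes\mathcal M)^G \to C_{L}^*(P_{s}(\Gamma), B\otimes\mathcal M\otimes\mathcal M)^{\Gamma}$, so after passing to the inductive limit over $s$ it sits in a six-term exact sequence together with the absolute equivariant $K$-homologies $K_{*}^{G}(\underline{E}G, B\otimes\mathcal M)$ and $K_{*}^{\Gamma}(\underline{E}\Gamma, B\otimes\mathcal M\otimes\mathcal M)$. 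Taking $B=\mathcal S$ and $B=\mathcal A(X)$ yields two such exact sequences, and the asymptotic morphism $\beta_t\widehat{\otimes}\mathrm{id}$ maps the first to the second.

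First I would assemble these two exact sequences into the same five-row commutative ladder that appears in the proof of Proposition \ref{prop-rel-Bott}, now with $\mathcal S$ replaced by $\mathcal S\otimes\mathcal M$ on the two $G$-rows and by $\mathcal S\otimes\mathcal M\otimes\mathcal M$ on the two $\Gamma$-rows (and correspondingly by $\mathcal A(X)\otimes\mathcal M$ and $\mathcal A(X)\otimes\mathcal M\otimes\mathcal M$ in the target column). The middle horizontal arrow of this ladder is exactly $\beta_{L,*}^{G,\Gamma}$, while the remaining four horizontal arrows are the absolute Bott maps
$$\beta_{L,*}^{G}\colon K_{*}^{G}(\underline{E}G,\mathcal S\otimes\mathcal M)\to K_{*}^{G}(\underline{E}G,\mathcal A(X)\otimes\mathcal M)$$
and
$$\beta_{L,*}^{\Gamma}\colon K_{*}^{\Gamma}(\underline{E}\Gamma,\mathcal S\otimes\mathcal M\otimes\mathcal M)\to K_{*}^{\Gamma}(\underline{E}\Gamma,\mathcal A(X)\otimes\mathcal M\otimes\mathcal M).$$

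The main point, and where I expect the real work to lie, is to show that these four absolute Bott maps are isomorphisms. The proposition preceding this one gives the corresponding statement for the coefficient $\mathcal S$, proved by a Mayer--Vietoris argument over the finite-dimensional Rips complexes that reduces the claim to the finite isotropy subgroups $F$, where one invokes the Higson--Kasparov--Trout isomorphism $\beta_*\colon K_*(\mathcal S\rtimes F)\to K_*(\mathcal A(\mathcal H_x)\rtimes F)$ fiberwise over $X$. What is needed here is the stabilization of that statement by the trivially-acted factor $\mathcal M$ (resp.\ $\mathcal M\otimes\mathcal M$). Since the group actions on $\mathcal M$ are trivial, the relevant Bott asymptotic morphism is literally $\beta_t\widehat{\otimes}\mathrm{id}_{\mathcal M}$; the delicate point is to confirm that every step of the Mayer--Vietoris reduction and of the fiberwise periodicity is natural in the coefficient algebra, so that tensoring throughout by the fixed algebra $\mathcal M$ preserves each isomorphism.

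Finally, once the four absolute Bott maps are known to be isomorphisms, the five lemma applied to the commutative ladder forces the middle arrow $\beta_{L,*}^{G,\Gamma}$ to be an isomorphism as well, which is precisely the assertion of the proposition.
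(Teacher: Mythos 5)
Your proposal takes essentially the same approach as the paper: the paper omits the proof, stating only that it ``follows from the same arguments in its maximal analogue'' (the relative Bott periodicity, Proposition \ref{prop-rel-Bott}), which is precisely your five-row ladder of six-term exact sequences with the absolute Bott maps as the outer horizontal isomorphisms, concluded by the five lemma. You additionally get the coefficient bookkeeping right ($\mathcal M$ on the $G$-rows, $\mathcal M\otimes\mathcal M$ on the $\Gamma$-rows, as forced by the definition of $h_{L,\mathcal M}$) and correctly flag the one point the paper leaves implicit, namely that the Mayer--Vietoris and finite-subgroup reductions behind the absolute Bott isomorphisms are natural in the coefficient algebra and so survive tensoring by the trivially-acted factor $\mathcal M$.
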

Since the proof of the above result follows from the same arguments in its maximal analogue (Proposition \ref{prop-rel-Bott}), we omit its proof.
%=====================================================================================================

\begin{comment}
\begin{proof} We have the following commutative diagram:
\begin{equation*}
\begin{tikzcd}
   K_{*+1}^{G}(\underline{E}G,\mathcal{S}) \ar[d]\ar{r}{\beta^G_L}&    K_{*+1}^{G}(\underline{E}G,\mathcal{A}(X)) \ar[d]\\
K_{*+1}^{\Gamma}(\underline{E}\Gamma,\mathcal{S}) \otimes\mathbb{R}\ar[d]\ar{r}{\beta^{\Gamma}_L}&  K_{*+1}^{\Gamma}(\underline{E}\Gamma,\mathcal{A}(X)) \otimes\mathbb{R} \ar[d]\\
K_{*,\mathbb{R}}^{G,\Gamma}(\underline{E}G,\underline{E}\Gamma,\mathcal{S})\ar{r}{\beta_L}\ar[d]
& K_{*,\mathbb{R}}^{G,\Gamma}(\underline{E}G,\underline{E}\Gamma,\mathcal{A}(X))\ar[d]\\
  K_{*}^{G}(\underline{E}G,\mathcal{S}) \ar{r}{\beta^G_L}\ar[d]&    K_{*}^{G}(\underline{E}G,\mathcal{A}(X)) \ar[d] \\
K_{*}^{\Gamma}(\underline{E}\Gamma,\mathcal{S}) \otimes\mathbb{R}\ar{r}{\beta^{\Gamma}_L}&  K_{*}^{\Gamma}(\underline{E}\Gamma,\mathcal{A}(X)) \otimes\mathbb{R}
 \end{tikzcd}
\end{equation*}
\end{proof}
\end{comment}

%=====================================================================================================
\begin{proof}[Proof of Theorem \ref{thm-reduced-Novi-real-coef}]

Consider the commutative diagram:
\begin{equation*}
    \begin{tikzcd}
    K_{*}^{G,\Gamma}(\underline{E}G,\underline{E}\Gamma,\mathcal{S}\otimes \mathcal M)\ar{d}{\beta_{L,*}^{G, \Gamma}}\ar{r}{\mu_{red}}
    & K_{*}(C_{red}^*(G,\Gamma,\mathcal{S}\otimes \mathcal M))\ar{d}{\beta_*}\\
    K_{*}^{G,\Gamma}(\underline{E}G,\underline{E}\Gamma,\mathcal{A}(X)\otimes \mathcal M) \ar{r}{\mu^{\mathcal{A}(X)}_{red}}&
    K_{*}(C_{red}^*(G,\Gamma, \mathcal{A}(X)\otimes \mathcal M)).
    \end{tikzcd}
\end{equation*}
Since $\beta_{L,*}^{G, \Gamma}$ is an isomorphism, to prove that the relative assembly map $\mu_{red}$ is injective, it suffices to show that $\mu^{\mathcal{A}(X)}_{red}$ is isomorphic. Indeed, it can be proved by the same cutting-and-pasting method used in the proof of Theorem \ref{thm-maxi-Novikov}. We have the following commutative diagram:
 \begin{equation*}
 \begin{tikzcd}
  K_{*+1}^{G}(\underline{E}G,\mathcal{A}(X)\otimes \mathcal M)\ar{r}{\mu_G^{\mathcal{A}(X)}} \ar{d} &  K_{*+1}(C_{red}^*(G,\mathcal{A}(X)\otimes \mathcal M))\ar{d} \\
K_{*+1}^{\Gamma}(\underline{E}\Gamma,\mathcal{A}(X)\otimes \mathcal M\otimes \mathcal M) \ar{r}{\mu_{\Gamma}^{\mathcal{A}(X)}}\ar{d} & K_{*+1}(C_{red}^*(\Gamma,\mathcal{A}(X)\otimes \mathcal M\otimes \mathcal M)) \ar{d} \\
K_{*+1}^{G,\Gamma}(\underline{E}G,\underline{E}\Gamma,\mathcal{A}(X)\otimes \mathcal M)\ar{d}\ar{r}{\mu^{\mathcal{A}(X)}_{red}}
& K_{*+1}(C_{red}^*(G,\Gamma, \mathcal{A}(X)\otimes \mathcal M))\ar{d} \\
  K_{*}^{G}(\underline{E}G,\mathcal{A}(X)\otimes \mathcal M) \ar{r}{\mu^{\mathcal{A}(X)}_{G}}\ar{d}&  K_{*}(C_{red}^*(G,\mathcal{A}(X)\otimes \mathcal M)) \ar{d}\\
K_{*}^{\Gamma}(\underline{E}\Gamma,\mathcal{A}(X)\otimes\mathcal  M\otimes \mathcal M)\ar{r}{\mu_{\Gamma}^{\mathcal{A}(X)}}&  K_{*}(C_{red}^*(\Gamma,\mathcal{A}(X)\otimes \mathcal M\otimes \mathcal M)).
 \end{tikzcd}
 \end{equation*}
By the five lemma, we have that $\mu_{red}^{\mathcal{A}(X)}$ is an isomorphism. This finishes the proof.
\end{proof}

\section{Applications to the relative Novikov conjecture}\label{sec:app}

In this section, we shall discuss an application of the  maximal (reduced) strong relative Novikov conjecture to the relative Novikov conjecture regarding the homotopy invariance of relative higher signatures of manifolds with boundary. We shall first construct the relative higher indices of signature operators on compact manifolds with boundary. Then we will show that the relative higher indices for signature operators  are invariant under orientation-preserving homotopy equivalences of manifolds with boundary as pairs.

Let $M$ be an oriented compact smooth manifold with boundary $\partial M$.  Suppose  $\pi_1(\partial M)=G$ and $\pi_1(M)=\Gamma$. Let
$$h: G\rightarrow \Gamma$$
be the natural homomorphism  induced by the inclusion of $\partial M$ into $M$.

%Let $M$ be a compact oriented manifold with boundary $\partial M$, and let $G=\pi_1 \partial M$ and $\Gamma=\pi_1 M$ denote their fundamental groups. We define a relative assembly map
%$$
%\mu_{max}: K^{G,\Gamma}_*(\underline{E}G,\underline{E}\Gamma) \to K_*(C^*_{max}(G, \Gamma)),
%$$
%where $K^{G,\Gamma}_*(\underline{E}G,\underline{E}\Gamma)$ is the relative $K$-homology and $C^*_{max}(G,\Gamma)$ is the maximal relative group $C^*$-algebra associated with the group homomorphism $h: G \to \Gamma$ induced by the inclusion of $\partial M$ into $M$. In this paper, we study the injectivity of the assembly map under assumptions on the geometry of the groups $\Gamma$ and $\ker(h)$, and we shall show that the injectivity of the assembly map $\mu$ implies the relative Novikov conjecture.

In \cite{chang-weinb-yu}, Chang, Weinberger and the fourth author defined a relative higher index
$$\mbox{Ind}(D_M, D_{\partial M}) \in KO_*(C^*_{max}(G,\Gamma))$$
for Dirac operators $(D_M, D_{\partial M})$ on spin manifolds with boundary. Here the group $KO_*(C^*_{max}(G,\Gamma))$ is the $KO$-theory of the maximal relative group $C^*$-algebra associated with the group homomorphism $h\colon G\to \Gamma$. They applied their relative higher index to detect the existence of positive scalar curvature metrics on manifolds with boundary and furthermore non-compact manifolds.

We shall define an analogous relative higher index for signature operators on manifolds with boundary.  The same construction below can be used to define both the maximal relative higher index and the  reduced relative higher index (with coefficients in a ${\rm II}_1$-factor $\mathcal M$) for signature operators. For simplicity, we shall only work out the details for the maximal relative higher index.

  We only carry out the details of the even dimensional case; the odd case is similar. Assume that $M$ is an even dimensional manifold with boundary and $D$ is the signature operator on $M$. Define $$M_\infty=M\bigcup\limits_{\partial M}(\partial M\times [0,\infty))$$
to be the manifold  obtained by attaching an infinity cylinder to $M$.
Let $D_\infty$ be  the signature operator on $M_\infty$. Let $\widetilde{M}_\infty$ be the universal cover of $M_{\infty}$ and $\widetilde{D}_\infty$ the lifting of $D_\infty$ on  $\widetilde{M}_\infty$. Since $\widetilde M_\infty$ is a complete manifold (without boundary), a standard  construction of higher indices for elliptic operators on complete manifolds  (cf. \cite{Willett-Yu-higher-index-book}) gives the higher index
 $${\rm Ind}_{max}(\widetilde{D}_\infty)=[p] \in K_0(C_{max}^*(\widetilde{M}_\infty)^\Gamma),$$
where $p$ is an idempotent in the matrix algebra of the unitization of $ C^*(\widetilde{M}_\infty)^{\Gamma}$. By the definition of the higher index (see \cite{Willett-Yu-higher-index-book}), we can choose $p$ so that its propagation is as small as we want.

Denote $\widetilde{M}$ to be the universal covering space of $M$ and we can view $\widetilde{M}$ as a subspace of $\widetilde{M}_{\infty}$. Let $\chi$ be the characteristic function of the subspace $\widetilde{M}$ of $\widetilde{M}_{\infty}$.
Consider the invertible element
$$u=e^{2\pi i(\chi p \chi)}\in \left(C_{max}^*(\widetilde{M})^\Gamma\right)^+\cong \left(C_{max}^*(\Gamma)\otimes \mathcal{K}(H)\right)^+.$$
Denote by $(\partial M)_{\Gamma}$ the space of the restriction of the covering space  $\widetilde{M}$ on $\partial M\subseteq M$. The space $(\partial M)_{\Gamma}$ is a manifold equipped with a proper and cocompact $\Gamma$-action. Let
$$[u]\in K_*(C^*_{max}(\Gamma))$$
be the higher index of the signature operator $\widetilde{D}_{\partial M}$ on $(\partial M)_{\Gamma}$.

Note that there is an integer $n_0$ such that
$$\left|\exp(2\pi ix)-\sum^{n_0}_{k=0}\frac{(2\pi ix)^k}{k!}\right|\leq \frac{1}{1000},$$
for all $x \in \mathbb{R}$. Denote
\begin{equation}\label{eq:approx}
\varphi(x)=\sum^{n_0}_{k=0}\frac{(2\pi ix)^k}{k!}.
\end{equation}
We define $v=\varphi(2\pi i(\chi p \chi))$ in the matrix algebra of $ \left(C_{max}^*(\widetilde{M})^\Gamma\right)^+$.

Since we can choose  $p$ so that its propagation is arbitrarily small, the operator $\chi p \chi$ is an idempotent away from a small tubular neighborhood of $(\partial M)_\Gamma$. By a standard  finite propagation argument, we have that  away from a small tubular neighborhood of $(\partial M)_\Gamma$, the operator $v$ is very close (in operator norm) to $1$.
Consequently,  $v$ restricts to an invertible element in the matrix algebra of $\left(C_{max}^*((\partial M)_\Gamma\times (-\epsilon,0))^\Gamma\right)^+\cong \left(C_{max}^*(\Gamma)\otimes \mathcal{K}(H)\right)^+$ for some $\epsilon >0$. Here the constant $\epsilon$ depends on the propagation of $p$.

 We shall repeat the above construction on  the complete manifold $\partial M\times \mathbb{R}$.  Note that the product space $(\partial M)_{\Gamma} \times \mathbb{R}$ is a $\Gamma$-covering space  of  $\partial M \times \mathbb{R}$. Denote by $D_{(\partial M )_{\Gamma}\times \mathbb{R}}$ the signature operator on  $(\partial M )_{\Gamma} \times \mathbb{R}$. We denote the higher index of $D_{(\partial M )_{\Gamma}\times \mathbb{R}}$ by
 $$[p']={\rm Ind}_{max}(D_{(\partial M)_{\Gamma}\times \mathbb{R}})\in K_0(C_{max}^*((\partial M )_{\Gamma}\times \mathbb{R})^\Gamma),$$
 where $p'$ is an idempotent in the matrix algebra of $\left(C_{max}^*((\partial M )_{\Gamma}\times \mathbb{R})^\Gamma\right)^+$ with small  propagation. Let $\chi'$ be the characteristic function of the subspace $(\partial M )_{\Gamma}\times (-\infty, 0]$ in $(\partial M)_{\Gamma}\times \mathbb{R}$. Define
 $$u'=e^{2\pi i(\chi' p' \chi')}.$$
 By a similar argument as above,  we have that
\begin{enumerate}
   \item[(1)] $v'=\varphi(2\pi i(\chi' p' \chi'))$ is invertible in the matrix algebra of $\left(C_{max}^*( (\partial M)_{\Gamma}\times (-\infty, 0] )^\Gamma\right)^+$;

     \item[(2)] away from a small tubular neighborhood of $(\partial M)_{\Gamma}\times\{0\}$, the operator $v'$ is close (in operator norm) to $1$.
\end{enumerate}
Similar as before, the element $v'$ restricts to an invertible element in $$\left(C_{max}^*({(\partial M)_{\Gamma}\times (-\epsilon,0)})^ \Gamma\right)^+\cong \left(C_{max}^*(\Gamma)\otimes \mathcal{K}(H)\right)^+.$$
By construction,  we have
$$v'=v.$$

Now we also repeat the above construction for the covering space $\widetilde{\partial M}\times \mathbb{R} \rightarrow  \partial M \times \mathbb{R}$ where $\widetilde{\partial M}$ is the universal covering space of $\partial M$. Denote $D_{\widetilde{\partial M}\times \mathbb{R}}$ the signature operator on $\widetilde{\partial M}\times \mathbb{R}$. Let
$$[p'']={\rm Ind}_{max}(D_{\widetilde{\partial M}\times \mathbb{R}})\in K_0(C_{max}^*( \widetilde{\partial M}\times \mathbb{R})^G)$$
be  the higher index of the signature operator $D_{\widetilde{\partial M}\times \mathbb{R}}$ on $\widetilde{\partial M}\times \mathbb{R}$.
Let $\chi''$ is the characteristic function of the subspace $\widetilde{\partial M}\times (-\infty, 0]$ in  $\widetilde{\partial M}\times \mathbb{R}$. Define
\begin{equation}\label{eq:invert}
v''=e^{2\pi i(\chi'' p'' \chi'')},
\end{equation}
which by the same argument above restricts  to  an invertible element in the matrix algebra of
 $$\left(C_{max}^*(\widetilde{\partial M}\times [-\epsilon,0])^G\right)^+\cong \left(C_{max}^*(G)\otimes \mathcal{K}(H)\right)^+.$$
By the functoriality of the higher index, we have that
$$h_{max}(v'')=v',$$
where $h_{max}\colon  C_{max}^*(\widetilde{\partial M}\times [-\epsilon, 0] )^G \rightarrow C_{max}^*(\widetilde{\partial M}\times [-\epsilon, 0] )^\Gamma$ is induced by the natural maps as follows:
 \begin{equation*}
 \begin{tikzcd}
\widetilde{\partial M}\times \mathbb{R} \ar{r}{h}\ar{d}& (\partial M)_{\Gamma}\times \mathbb{R}\ar{d}\\
\partial M \times \mathbb{R}\ar{r}&  \partial M \times \mathbb{R}.
 \end{tikzcd}
 \end{equation*}
Consider the path of invertibles
$$v(t)=\varphi(2\pi i(t\chi p \chi)), t\in [0, 1].$$
 We obtain a path joining $v$ and $\lambda$ in the matrix algebra of $(C_{max}^*(\widetilde{M})^\Gamma)^+$, where $\lambda$ is a constant very close to $1$. By connecting $\lambda$ to $1$ by the linear path $(1-s)\lambda + s $,  we have the following result.
\begin{lem}\label{lm:invpath}
 $v$ is homotopic to $1$ through a continuous path of invertible elements in the matrix algebra of $(C_{max}^*(\widetilde{M})^\Gamma)^+$.
\end{lem}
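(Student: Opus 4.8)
The plan is to realise the homotopy explicitly by linearly scaling the exponent, exactly along the path flagged just before the statement. Set
\[
v(t)=\varphi\bigl(2\pi i\,(t\,\chi p\chi)\bigr),\qquad t\in[0,1],
\]
with $\varphi$ the polynomial of \eqref{eq:approx}, so that $v(1)=v$ while $v(0)=\varphi(0)$ is a scalar $\lambda$, equal to (or at worst within $\tfrac1{1000}$ of) $1$. Because $\varphi$ is a polynomial of degree $n_0$ and $\chi p\chi$ has arbitrarily small propagation, each $v(t)$ is again a finite-propagation element of the matrix algebra over $(C^*_{max}(\widetilde M)^\Gamma)^+$: the cut-down $\chi p\chi$ lies in this algebra, its scalar part coming from the unitization, and $t\mapsto v(t)$ depends polynomially, hence continuously, on $t$. (One could equally run the genuine exponential path $t\mapsto e^{2\pi i\,t\chi p\chi}$, which is manifestly invertible, but using the truncation $\varphi$ keeps every stage of the homotopy at finite propagation, consistent with the surrounding construction.) The only substantive thing to verify is therefore that $v(t)$ is invertible for every $t$.

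The key step is the invertibility of the whole family $\{v(t)\}_{t\in[0,1]}$. Here $v(t)$ is the degree-$n_0$ polynomial approximant to $\exp\!\bigl(2\pi i\,t\chi p\chi\bigr)$ singled out by \eqref{eq:approx}, so the difference is the tail $\sum_{k>n_0}\tfrac{(2\pi i\,t\chi p\chi)^k}{k!}$. Feeding the bounded operator $t\chi p\chi$ into the estimate preceding \eqref{eq:approx} through the functional calculus gives
\[
\bigl\|\,v(t)-\exp(2\pi i\,t\chi p\chi)\,\bigr\|\le \tfrac1{1000}
\]
uniformly in $t\in[0,1]$. Since $\exp(2\pi i\,t\chi p\chi)$ is invertible with inverse $\exp(-2\pi i\,t\chi p\chi)$, and $v(t)$ lies within $\tfrac1{1000}$ of it, a Neumann-series perturbation argument shows that each $v(t)$ is invertible. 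I expect this estimate to be the main obstacle, and it is cleanest when $p$ is taken to be a genuine projection: then $\chi p\chi$ is self-adjoint with spectrum contained in $[0,1]$, the spectral mapping theorem makes the functional-calculus bound immediate, and $\exp(-2\pi i\,t\chi p\chi)$ is unitary, so its inverse has norm $1>\tfrac1{1000}$. If one only has an idempotent $p$, the same conclusion follows after enlarging $n_0$ so that the tail is smaller than $\|\exp(-2\pi i\,t\chi p\chi)\|^{-1}\ge \exp(-2\pi\|\chi p\chi\|)$, which is bounded below uniformly in $t$.

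Finally I would concatenate the two homotopies. The path $t\mapsto v(t)$ runs through invertibles from $v=v(1)$ to the scalar $\lambda=v(0)$, and the straight line $s\mapsto(1-s)\lambda+s$ runs through invertible scalars, all within $\tfrac1{1000}$ of $1$, from $\lambda$ to $1$. Their concatenation is a continuous path of invertible elements in the matrix algebra of $(C^*_{max}(\widetilde M)^\Gamma)^+$ joining $v$ to $1$, which is exactly the assertion of the lemma.
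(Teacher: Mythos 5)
Your proof is correct and follows essentially the same route as the paper, which likewise constructs the path $v(t)=\varphi\bigl(2\pi i\,(t\chi p\chi)\bigr)$, $t\in[0,1]$, and then joins the resulting scalar $\lambda$ to $1$ by the linear path $(1-s)\lambda+s$. The only difference is that you spell out why each $v(t)$ is invertible (the uniform $\tfrac{1}{1000}$-approximation of the invertible exponential $e^{2\pi i\,t\chi p\chi}$ plus a Neumann-series perturbation), a verification the paper leaves implicit.
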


Now, we are ready to define the relative higher index for the signature operators $(D_M, D_{\partial M})$ on the pair $(M, \partial M)$. Let $h_{max}: C_{max}^*(G)\to C_{max}^*(\Gamma)$ be the homomorphism induced by the homomorphism $h: G \to \Gamma$. Denote by $C_{h_{max}\otimes id}$ the suspension of the cone associated with the homomorphism
$$h_{max} \otimes id: C_{max}^*(G)\otimes \mathcal{K}(H)\rightarrow C_{max}^*(\Gamma)\otimes \mathcal{K}(H).$$
Clearly, we have
$$C_{h_{max} \otimes id}\cong C^*_{max}(G, \Gamma),$$
where $C^*_{max}(G, \Gamma)$ is the suspension of the cone of the homomorphism
$$h_{max}: C^*_{max}(G) \to C^*_{max}(\Gamma).$$
\begin{defn}\label{def:rel}
Let $M$ be an even-dimensional compact oriented manifold with boundary $\partial M$. Define the relative higher index of the signature operators $(D_M, D_{\partial M})$ on $(M,\partial M)$ to be
$${\rm Ind}_{max}(D_M, D_{\partial M})=[(v'',f)]\in K_0(C^*_{max}(G, \Gamma)),$$
where $v''$ is defined in line \eqref{eq:invert} and $f$ is the continuous path of invertible elements given by Lemma \ref{lm:invpath}.
\end{defn}
When the boundary $\partial M$ is empty, this relative higher index is precisely the higher index of the signature operator on $M$.

We can consider the function $\varphi_s (t)=\varphi (t/(s+1))$ on $\mathbb{R}$ for each $s\geq 0$. Replacing the function $\varphi$ with $\varphi_s$ in the definition of relative higher index, we obtain a continuous path of representatives of ${\rm Ind}_{max}(D_M, D_{\partial M})$ whose propagations approach $0$ as $s \to \infty$. This continuous path of representatives defines the local relative index
$$
[D_M, D_{\partial M}] \in K_*^{G, \Gamma}(\underline{E}G, \underline{E}\Gamma),
$$
such that
$$\mu_{max}([D_M, D_{\partial M}])={\rm Ind}_{max}(D_M, D_{\partial M}),$$
where $\mu_{max}: K^{G, \Gamma}(\underline{E}G, \underline{E}\Gamma)\to K_*(C^*_{max}(G, \Gamma))$ is the maximal relative assembly map.

We show that  the relative higher index is invariant under orientation-preserving homotopy equivalences of pairs.
\begin{thm}\label{thm-homotopy-invariant-rel-ind}
The higher index ${\rm Ind}_{max}(D_M, D_{\partial M})$ is a homotopy invariant, that is, if
$\phi: (M,\partial M)\rightarrow (N,\partial N)$
is an orientation-preserving homotopy equivalence between two compact oriented manifolds with boundary, then
$$\phi_*({\rm Ind}_{max}(D_M,D_{\partial M}))={\rm Ind}_{max}(D_N, D_{\partial N})\in K_\ast(C_{max}^*(G,\Gamma))$$
and
$$
\phi_*({\rm Ind}_{red}(D_M, D_{\partial M}))={\rm Ind}_{red}(D_N, D_{\partial N}) \in K_*(C^*_{red}(G, \Gamma, \mathcal  M)).
$$
\end{thm}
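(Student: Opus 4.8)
The plan is to reduce the homotopy invariance of the relative higher index to the classical homotopy invariance of higher signatures of \emph{closed} manifolds, organized through the mapping-cone description of $C^*_{max}(G,\Gamma)$ together with a Hilsum--Skandalis controlled homotopy on the cylindrical-end manifolds $M_\infty$ and $N_\infty$. I will only treat the even-dimensional maximal case; the odd-dimensional case is analogous and the reduced case is discussed at the end. First I would note that an orientation-preserving homotopy equivalence of pairs $\phi\colon (M,\partial M)\to(N,\partial N)$ restricts to an orientation-preserving homotopy equivalence $\partial\phi\colon \partial M\to\partial N$ of the closed boundaries and is compatible with the identifications $G=\pi_1(\partial M)=\pi_1(\partial N)$ and $\Gamma=\pi_1 M=\pi_1 N$. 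By functoriality of the mapping cone this makes $\phi_*$ a well-defined homomorphism on $K_*(C^*_{max}(G,\Gamma))$, and the whole statement becomes the assertion that $\phi_*[(v''_M,f_M)]=[(v''_N,f_N)]$ in the notation of Definition \ref{def:rel}.

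Next I would use the six-term exact sequence of the mapping cone of $h_{max}\colon C^*_{max}(G)\to C^*_{max}(\Gamma)$. After the suspension, the natural map $K_0(C^*_{max}(G,\Gamma))\to K_1(C^*_{max}(G))$ sends the relative class $[(v'',f)]$ to the boundary invertible $[v'']\in K_1(C^*_{max}(G))$ defined in line \eqref{eq:invert}, which is the higher signature index of the signature operator on $\widetilde{\partial M}\times\mathbb{R}$, i.e. by the product formula the higher signature index of the closed manifold $\partial M$. Since $\partial\phi$ is an orientation-preserving homotopy equivalence of closed oriented manifolds, the classical homotopy invariance of higher signatures---for instance the Hilsum--Skandalis bounded equivalence of signature operators, or Kasparov's $KK$-theoretic argument---gives $(\partial\phi)_*[v''_M]=[v''_N]$. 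Thus $\phi_*[(v''_M,f_M)]$ and $[(v''_N,f_N)]$ have the same image in $K_1(C^*_{max}(G))$, and by exactness their difference lies in the image of $K_1(C^*_{max}(\Gamma))\to K_0(C^*_{max}(G,\Gamma))$; concretely this difference is represented by the higher signature index of a closed $\Gamma$-manifold.

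It remains to show this difference vanishes, which is the heart of the argument. I would extend $\phi$ to a proper, orientation-preserving homotopy equivalence $\phi_\infty\colon M_\infty\to N_\infty$ that agrees with $\partial\phi\times\mathrm{id}$ on the cylindrical ends, lift it to the complete covers $\widetilde M_\infty,\widetilde N_\infty$, and run the Hilsum--Skandalis construction equivariantly. The Hilsum--Skandalis perturbation is built from bounded operators of controlled propagation, hence respects the equivariant Roe-algebra structure; because it is modeled on $\partial\phi$ near the ends, it intertwines the interior trivializations $f_M$ and $f_N$ with the boundary identification of the previous paragraph, producing an explicit homotopy of invertibles in the matrix algebra of $(C^*_{max}(G,\Gamma))^+$ joining $\phi_*[(v''_M,f_M)]$ to $[(v''_N,f_N)]$. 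Equivalently, one may glue $M$ to the orientation-reversed copy $\overline N$ along $\partial\phi$ to form a closed manifold $W$; since $\phi$ is a homotopy equivalence of pairs, $W$ is homotopy equivalent to the double of $M$, which bounds $M\times[0,1]$, so its higher signature index vanishes by bordism invariance, and a Mayer--Vietoris additivity argument relates this vanishing to the difference class above.

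The main obstacle is precisely this last step: carrying out the Hilsum--Skandalis controlled homotopy on the non-compact manifolds $M_\infty$ and $N_\infty$ and checking that it is compatible with the mapping-cone structure, so that the interior nullhomotopy of Lemma \ref{lm:invpath} and the boundary signature class fit together into a single homotopy within $C^*_{max}(G,\Gamma)$; keeping the propagation small on the cylindrical ends while matching the cutoff functions $\chi$ and $\chi''$ used in Definition \ref{def:rel} is where the care is needed. Finally, the reduced statement follows by the identical argument after replacing $h_{max}$ by the $*$-homomorphism $h_{red,\mathcal M}$ of Lemma \ref{lemmakey} and the maximal Roe algebras by their reduced analogues with coefficients in $\mathcal M$: all of the constructions above are natural with respect to the trace-preserving embedding $C^*_{red}(G)\to\mathcal M$, so the same controlled homotopy yields $\phi_*({\rm Ind}_{red}(D_M,D_{\partial M}))={\rm Ind}_{red}(D_N,D_{\partial N})$ in $K_*(C^*_{red}(G,\Gamma,\mathcal M))$.
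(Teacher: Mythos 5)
Your main route --- the third paragraph --- is essentially the paper's own proof. The paper implements exactly the step you flag as ``the heart'': invoking \cite[Section 8]{Xie-Yu-Higher-invariant-in-NCG}, it produces continuous paths of idempotents $(p_t)$, $(p'_t)$, $(p''_t)$ with small propagation connecting $\phi_\ast(p_0)$ to $p_1$ on $\widetilde{N}_\infty$, on $(\partial N)_\Gamma\times\mathbb{R}$, and on $\widetilde{\partial N}\times\mathbb{R}$, respectively. The compatibility you worry about (matching the cutoffs $\chi$, $\chi''$ and the interior trivializations) is resolved by the observation that the two cylinder paths are built by the \emph{same formula}, so that $v'_t=v_t$ and $h_{max}(v''_t)=v'_t$ hold on the nose; then $t\mapsto (v''_t,f_t)$, with $f_t$ the trivializing path of Lemma \ref{lm:invpath} applied to $v_t$, is a single continuous path of mapping-cone cycles joining the two relative representatives of Definition \ref{def:rel}. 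Your treatment of the reduced case (same construction with $h_{red,\mathcal M}$ and coefficients in $\mathcal M$) also matches the paper. One structural remark: your second paragraph's six-term-exact-sequence reduction is redundant --- once the direct homotopy of relative cycles is constructed, it does strictly more than matching boundary classes in $K_1(C^*_{max}(G))$, so that paragraph does no work (though it is not wrong).

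However, your ``equivalently'' gluing alternative is a genuine error and should be deleted. Since $\partial\phi$ is only a homotopy equivalence of the boundaries, not a diffeomorphism, one cannot glue $M$ to $\overline{N}$ along it to obtain a smooth closed manifold $W$; there is in general no such $W$, and the subsequent claims (that $W$ would be homotopy equivalent to the double of $M$, and that a Mayer--Vietoris additivity argument identifies its index with the difference class in the image of $K_1(C^*_{max}(\Gamma))$) would require precisely the relative index machinery being established here. Fortunately this is presented only as an aside; your primary Hilsum--Skandalis-type controlled-homotopy route is the one the paper takes, with the path-of-idempotents result of \cite{Xie-Yu-Higher-invariant-in-NCG} playing the role of your equivariant controlled perturbation.
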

\begin{proof}
The same proof below works for both the maximal relative higher index and the  reduced relative higher index (with coefficients in a ${\rm II}_1$-factor $\mathcal M$). For simplicity, we shall only give the details for the maximal case. Also, we shall only prove the even dimensional case. The odd dimensional case is completely similar.

    Let us write
$${\rm Ind}_{max}(D_M, D_{\partial M})=[v''_0, f_0]\in K_0(C^*_{max}(G, \Gamma))$$
and
$${\rm Ind}_{max}(D_N, D_{\partial N})=[v''_1, f_1]\in K_0(C^*_{max}(G, \Gamma)),$$
where $v''_i$ and $f_i$ are given as in Definition \ref{def:rel}.
We shall show that $(v''_0, f_0)$ is homotopic to $(v''_1, f_1)$ in the matrix algebra of $\left(C^*_{max}(G, \Gamma)\otimes \mathcal{K}\right)^+$.

The homotopy equivalence $\phi:M \to N$ induces an equivariant homotopy equivalence
$$\widetilde{\phi}\colon  \widetilde{M} \rightarrow \widetilde{N}.$$
 Denote by $D_{M_{\infty}}$ and $D_{N_{\infty}}$ the signature operators on $M_{\infty}$ and $N_{\infty}$, respectively. Following the arguments in \cite[Section 8]{Xie-Yu-Higher-invariant-in-NCG}, %(cf.  \cite[Theorem 4.3]{Higson-Roe-mapping-Surgery})
we have a continuous path of idempotent $\left( p_t \right)_{t \in [0, 1]}$ in the matrix algebra of $\left(C_{max}^*(\widetilde{N})^\Gamma\right)^+$ connecting $\phi_\ast(p_0)$ and $p_1$, where $[p_0]={\rm Ind}_{max}(D_{M_\infty})$  and $[p_1]={\rm Ind}_{max}(D_{N_\infty})$.
Define a path of invertibles
  $$v_t=\varphi(2\pi i(\chi p_t \chi))$$
in the matrix algebra of $\left(C_{max}^*(\widetilde{N})^\Gamma\right)^+\cong \left(C_{max}^*(\Gamma)\otimes K\right)^+, $
where $\chi$ be the characteristic function of the subspace $\widetilde{N}$ in $\widetilde{N}_\infty$ and $\varphi$ is the function given in line \eqref{eq:approx}.

The homotopy equivalence $\phi: (M, \partial M) \to (N, \partial N)$ also induces a $\Gamma$-equivariant homotopy equivalence
$${\phi_{\partial}^\Gamma}\times id: (\partial M)_{\Gamma} \times \mathbb{R}\rightarrow ( \partial N)_{\Gamma} \times \mathbb{R},$$
where $(\partial M)_{\Gamma}$ (resp. $(\partial N)_{\Gamma}$) is the restriction of the covering space $\widetilde M \to M$ (resp. $\widetilde N\to N$)  over $\partial M$ (resp. $\partial N$).
Let $D_{(\partial M)_{\Gamma}\times \mathbb{R}}$ and $D_{( \partial N)_{\Gamma} \times \mathbb{R}}$ be the signature operator on $( \partial M)_{\Gamma} \times \mathbb{R}$ and $( \partial N)_{\Gamma} \times \mathbb{R}$ respectively. Similarly (cf. \cite[Section 8]{Xie-Yu-Higher-invariant-in-NCG} %\cite[Theorem 4.3]{Higson-Roe-mapping-Surgery}
), there exists a continuous path of idempotents  $\left(p'_t\right)_{t \in[0. 1]}$ connecting $({\phi}_\partial^\Gamma)_\ast(p'_0)$ and $p'_1$, where $[p'_0]={\rm Ind}_{max}(D_{( \partial M)_{\Gamma} \times \mathbb{R}})$ and $[p'_1]= {\rm Ind}_{max}(D_{( \partial N)_{\Gamma}\times \mathbb{R}})$. As a result, we obtain a path of invertibles
$$v'_t=\varphi(2\pi i(\chi' p'_t \chi')),$$
for all $ t\in [0,1]$ in the matrix algebra of $(C_{max}^*(( \partial N)_{\Gamma}\times [-\epsilon, 0])^\Gamma)^+ \cong  (C_{max}^*(\Gamma)\otimes K)^+$ for some  constant $\epsilon>0$, where $\chi'$ be the characteristic function of the subspace $( \partial N)_{\Gamma}\times (-\infty, 0]$ in $( \partial N)_{\Gamma}\times \mathbb{R}$.
Since the path $p_t$ and $p'_t$ are constructed using the same formula,  we have
by construction that $$v'_t=v_t,$$
for all $t \in [0, 1]$.

The homotopy equivalence $\phi:(M, \partial M)\to (N, \partial N)$ also induces a $G$-equivariant homotopy equivalence
$$\widetilde \phi_\partial \colon  \widetilde{\partial M}\times \mathbb{R}\longrightarrow \widetilde{\partial N}\times \mathbb{R},$$
where $\widetilde{\partial M}$ (resp. $\widetilde{\partial N}$) is the universal covering space of $\partial M$ (resp. $\partial N$). Similarly, there is a continuous path of idempotents $\left(p''_t\right)_{t \in [0, 1]}$ in the matrix algebra of $C_{max}^*(\widetilde{\partial N}\times \mathbb R )^G$ connecting $(\widetilde \phi_\partial)_\ast(p''_0)$ and $p''_1$, where $[p''_0]={\rm Ind}_{max}(D_{\widetilde{\partial M}\times \mathbb{R}})$ and $[p_1'']={\rm Ind}_{max}(D_{\widetilde{\partial N}\times \mathbb{R}})$. Hence we obtain a path of invertibles
$$v''_t=\varphi(2\pi i (\chi'' p''_t \chi''))$$
in the matrix algebra of $C_{max}^*(\widetilde{\partial M}\times (-\epsilon, 0] )^G\cong  C_{max}^*(G)\otimes K$ for some constant $\epsilon >0$, where  $\chi''$ be the characteristic function of the subspace $ \widetilde{\partial N}\times (-\infty, 0]$ in $\widetilde{\partial N}\times \mathbb{R}$.

By the definitions of $v'_t$ and $v''_t$, we have that
$$h_{max}(v''_t)=v'_t,$$
for all $ t\in [0, 1]$, where $h_{max}: C^*_{max}(G) \to C^*_{max}(\Gamma)$ is the $*$-homomorphism induced by $h: G = \pi_1(\partial N) \to \Gamma =\pi_1(N)$.

For each fixed $t\in [0, 1]$, let $\{f_t(s)\}_{s\in [0, 1]}$ be the path constructed out of $v_t$ as in Lemma \ref{lm:invpath}. In particular,
$f_t(0)=v_t$ and $f_t(1) = 1$ for all $t\in [0,1]$. Consequently, we have a continuous path $\{(v''_t, f_t)\}_{t\in[0,1]}$ connecting $(v''_0, f_0)$ and $(v''_1, f_1)$.   As a result, we have that
$$\phi_*({\rm Ind}_{max}(D_M,D_{\partial M}))={\rm Ind}_{max}(D_N,D_{\partial N}).$$
\end{proof}

At the end of this section, let us show that the  maximal (reduced) strong relative Novikov conjecture together with Theorem \ref{thm-homotopy-invariant-rel-ind} implies the relative Novikov conjecture regarding the homotopy invariance of relative higher signatures of manifolds with boundary. Let us focus only on the maximal case, since the reduced case is completely analogous.

Let $(M, \partial M)$ be a compact oriented manifold with boundary. Let $\psi_M\colon M \to B\Gamma$ (resp. $\psi_{\partial M}\colon  \partial M \to BG$) be the classifying map associated with the universal covering space. Let $\mathcal L_M$ be the $ L$-class of $M$ and $\mathcal L_{\partial M}$ the $L$-class of the boundary $\partial M$, respectively.
For each element $(\xi,\eta) \in H^*(BG, B\Gamma)$, one can define a relative index pairing as follows:
\begin{equation}\label{eq:pair}
\langle (D_M, D_{\partial M}), (\xi, \eta)\rangle = \int_M \mathcal L_M \cup \psi_M^*(\xi) - \int_{\partial M} \mathcal L_{\partial M} \cup \psi_{\partial M}^*(\eta),
\end{equation}
where $H^*(BG, B\Gamma)$ is the relative group cohomology for the group homomorphism $h: G\to \Gamma$ and $(D_M, D_{\partial M})$ is the signature operator of $(M, \partial M)$.  The right hand side of the above equation is usually referred to as a relative higher signature of $(M, \partial M)$. We refer the reader to \cite{Lesch-Moscovici-Pflaum_Connes-Chern-charactor} for more details on relative index pairings.

\begin{conj}[Relative Novikov conjecture]
All relative higher signatures are invariant under orientation-preserving homotopy equivalences of pairs. More precisely, assume that $\phi\colon (M, \partial M)\to (N, \partial N)$ is an orientation-preserving homotopy equivalence of pairs, then
\begin{equation}\label{eq:equal}
\int_M \mathcal L_M \cup \psi_M^*(\xi) - \int_{\partial M} \mathcal L_{\partial M} \cup \psi_{\partial M}^*(\eta)=
\int_N \mathcal L_N \cup
\psi_N^*(\xi) - \int_{\partial N} \mathcal L_{\partial N} \cup \psi_{\partial N}^*(\eta)
\end{equation}
for all $(\xi,\eta) \in H^*(BG, B\Gamma)$, where $\psi_N\colon N \to B\Gamma$ and $\psi_{\partial N}\colon \partial N \to BG$ are continuous maps such that the following diagram commutes:
\[ \begin{tikzcd}
			(M, \partial M) \ar{rr}{\phi}\ar[dr, "{(\psi_M, \psi_{\partial M})}"'] &  & (N, \partial N) \ar[dl, "{(\psi_N, \psi_{\partial N})}"]\\
			&(B\Gamma,BG) &
	\end{tikzcd}
\]
\end{conj}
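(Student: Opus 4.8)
The plan is to convert the numerical identity \eqref{eq:equal} into a single equality of relative $K$-homology classes and then appeal to the injectivity results established above. First I would package the pair of signature operators $(D_M, D_{\partial M})$, via the classifying maps $\psi_M$ and $\psi_{\partial M}$, into a relative $K$-homology signature class $[D_M, D_{\partial M}]_{BG,B\Gamma} \in K_*(BG,B\Gamma)$, and analogously $[D_N, D_{\partial N}]_{BG,B\Gamma}$ using $\psi_N$ and $\psi_{\partial N}$ into the same pair $(B\Gamma, BG)$; the homotopy commutativity of the diagram in the statement guarantees $\psi_N\circ\phi \simeq \psi_M$ and $\psi_{\partial N}\circ\phi_\partial \simeq \psi_{\partial M}$, so both classes live in the same group. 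By the relative Connes--Chern character \cite{Baum-Conn_chern-character}, for each $(\xi,\eta)\in H^*(BG,B\Gamma)$ the relative higher signature appearing on either side of \eqref{eq:equal} is exactly the pairing of $(\xi,\eta)$ with the image of the corresponding signature class in $K_*(BG,B\Gamma)\otimes\mathbb{R}$. Consequently \eqref{eq:equal} holds for all $(\xi,\eta)$ once we establish the single equality
$$[D_M, D_{\partial M}]_{BG,B\Gamma} = [D_N, D_{\partial N}]_{BG,B\Gamma} \in K_*(BG,B\Gamma)\otimes\mathbb{R}.$$

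To prove this equality I would use the relative assembly map realized, as in Section~\ref{sec:app} and the remarks of Section~3, as the composition
$$\mu\colon K_*(BG,B\Gamma)\otimes\mathbb{R} \xrightarrow{\ \sigma_{G,\Gamma}\ } K_*^{G,\Gamma}(\underline{E}G,\underline{E}\Gamma,\mathcal M) \xrightarrow{\ \mu_{red}\ } K_*(C_{red}^*(G,\Gamma,\mathcal M)),$$
which sends the signature class $[D_M, D_{\partial M}]_{BG,B\Gamma}$ to the relative higher index ${\rm Ind}_{red}(D_M, D_{\partial M})$, and similarly for $N$; the maximal version of the same composition, landing in $K_*(C_{max}^*(G,\Gamma))$, is available when one prefers to work with $\mu_{max}$. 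By Theorem~\ref{thm-homotopy-invariant-rel-ind} the relative higher index is homotopy invariant, so
$${\rm Ind}_{red}(D_M, D_{\partial M}) = {\rm Ind}_{red}(D_N, D_{\partial N}),$$
whence the two signature classes have the same image under $\mu$.

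It then remains to invoke injectivity. If $\ker(h)$ is a-T-menable, Theorem~\ref{thm-maxi-Novikov} gives injectivity of the maximal relative assembly map; if $\ker(h)$ is hyperbolic, Theorem~\ref{thm-reduced-Novi-real-coef} gives injectivity of the reduced relative assembly map with coefficients in a ${\rm II}_1$-factor $\mathcal M$, and the identification $K_0(\mathcal M)=\mathbb{R}$ is precisely what makes the real-valued higher signatures visible. Combined with the rational injectivity of the local index map $\sigma_{G,\Gamma}$ recorded in Section~3, the composite $\mu$ is injective on $K_*(BG,B\Gamma)\otimes\mathbb{R}$ in either case. Injectivity of $\mu$ together with the equality of images from the previous paragraph forces $[D_M, D_{\partial M}]_{BG,B\Gamma} = [D_N, D_{\partial N}]_{BG,B\Gamma}$, and the first paragraph then yields \eqref{eq:equal}.

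The main obstacle is the first step: identifying the topological relative higher signatures defined by \eqref{eq:pair} with the analytic pairing against the relative $K$-homology signature class. This is the relative counterpart of the classical fact that higher signatures are the pairings of the Chern character of the signature class, but here one must match the mapping-cone structure of $K_*(BG,B\Gamma)$ against the relative cohomology $H^*(BG,B\Gamma)$, checking in particular that the boundary term $\int_{\partial M}\mathcal L_{\partial M}\cup\psi_{\partial M}^*(\eta)$ corresponds to the $C^*(G)$-component of the relative index under the six-term exact sequence of the cone. Once this relative index/signature identification is in place, the homotopy invariance supplied by Theorem~\ref{thm-homotopy-invariant-rel-ind} already reconciles the interior and boundary contributions simultaneously, so no further compatibility between the two pieces needs to be verified by hand.
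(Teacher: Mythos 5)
Your proposal is correct and follows essentially the same route as the paper: reduce \eqref{eq:equal} to the single equality $\phi_\ast([D_M, D_{\partial M}]) = [D_N, D_{\partial N}]$ in $K_*(BG,B\Gamma)$ (rationalized) via the relative Connes--Chern character of \cite{Baum-Conn_chern-character}, push both classes through the composite assembly map $\mu_{max}\circ\sigma_{G,\Gamma}$ (resp.\ $\mu_{red}\circ\sigma_{G,\Gamma}$ with the ${\rm II}_1$-factor $\mathcal M$), apply the homotopy invariance of the relative higher index (Theorem \ref{thm-homotopy-invariant-rel-ind}), and conclude by the injectivity of $\sigma_{G,\Gamma}$ together with Theorem \ref{thm-maxi-Novikov} or Theorem \ref{thm-reduced-Novi-real-coef}. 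The ``main obstacle'' you flag---identifying the pairing \eqref{eq:pair} with the analytic pairing against the relative signature class through the mapping-cone structure---is likewise treated by the paper only at the level of assertion via the Connes--Chern isomorphism, so your account is faithful to, and no less complete than, the paper's own argument.
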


There is a relative Connes--Chern character map
$$\textup{Ch}: K_*(BG, B\Gamma)\otimes \mathbb{C} \to H_*(BG, B\Gamma)\otimes \mathbb{C}.$$
It is known that the relative Connes--Chern character $\textup{Ch}$ is an isomorphism, cf.  \cite{Baum-Conn_chern-character}. The pairing in line \eqref{eq:pair} can be viewed as the natural pairing
 \[ H_*(BG, B\Gamma)\otimes \mathbb{C}\times H^\ast(BG, B\Gamma)\otimes \mathbb{C}  \to \mathbb C.\]
It follows that,  if the two $K$-homology classes $\phi_\ast([D_M, D_{\partial M}])$ and $[D_N, D_{\partial N}])$ coincide in $K_*(BG, B\Gamma)\otimes \mathbb{C}$, then the equality \eqref{eq:equal} holds, hence proves the relative Novikov conjecture in this case. However, by Theorem \ref{thm-homotopy-invariant-rel-ind}, we have that
$$\phi_*({\rm Ind}_{max}(D_M,D_{\partial M}))={\rm Ind}_{max}(D_N, D_{\partial N})\in K_\ast(C_{max}^*(G,\Gamma)).$$ Now if the maximal strong relative Novikov conjecture (Conjecture \ref{conj:max}) holds for $h\colon G\to \Gamma$, that is,
\[ \mu_{max}\colon K_*^{G, \Gamma}(\underline{E}G,\underline{E}\Gamma) \to K_*(C^*_{max}(G,\Gamma)) \]
is injective, then it follows that
\[ K_*(BG, B\Gamma)\otimes \mathbb C\to  K_*^{G, \Gamma}(\underline{E}G,\underline{E}\Gamma)\otimes \mathbb C \xrightarrow{\ \mu_{max}\ } K_*(C^*_{max}(G,\Gamma))\otimes \mathbb C  \]
is injective, since the natural homomorphism $K_*(BG, B\Gamma)\otimes \mathbb C\to  K_*^{G, \Gamma}(\underline{E}G,\underline{E}\Gamma)\otimes \mathbb C$ is always injective. Since we have
\[ \mu_{max}(\phi_\ast([D_M, D_{\partial M}]) ) =  \phi_*({\rm Ind}_{max}(D_M,D_{\partial M})) \]
and
\[ \mu_{max}([D_N, D_{\partial N}]) =  {\rm Ind}_{max}(D_N,D_{\partial N}), \]
it follows that
\[ \phi_\ast([D_M, D_{\partial M}]) = [D_N, D_{\partial N}] \in K_*(BG, B\Gamma)\otimes \mathbb{C}.
\]
Therefore, this shows that the maximal strong relative Novikov conjecture implies the relative Novikov conjecture. The implication that the relative Novikov conjecture follows from the reduced strong relative Novikov conjecture is similar. We omit the details.

\begin{rmk}
In \cite{Lei-Lott-Piaz}, Leichtnam--Lott--Piazza defined a higher index for signature operators on manifolds with boundary, under certain invertibility assumptions of the signature operator on the boundary. We point out that the relative higher index of signature operators defined in the current paper is generally different from  the higher index of Leichtnam--Lott--Piazza. While the construction of the higher index by Leichtnam, Lott and Piazza requires an invertibility condition of the signature operator on the boundary, the relative higher index in our paper is always defined without any invertiblity condition on the boundary. On the other hand, the higher index of Leichtnam, Lott and Piazza lies in $K_*(C^*_{red}(\Gamma))$ instead of the $K$-theory of the relative group $C^\ast$-algebra, due to the extra invertibility condition on the boundary. The two (relative) higher indices are related as follows. Let us assume the invertibility condition on the boundary as in \cite{Lei-Lott-Piaz} so that the higher index of  Leichtnam--Lott--Piazza is defined. Then the image of Leichtnam--Lott--Piazza's higher index under the boundary map $K_*(C^*_{red}(\Gamma,\mathcal{M}\otimes \mathcal{M}))\to K_*(C^*_{red}(G,\Gamma, \mathcal{M}))$ coincides with our relative higher index for the pair $(M,\partial M)$.
\end{rmk}

\nocite{Weinberger-Xie-Yu-additivity-of-higher-rho, Xie-Yu-Higher-invariant-in-NCG, MR3928083}
\nocite{Tang-Jerome}
\bibliographystyle{plain}
\bibliography{ref}

\end{document}